\documentclass[a4paper, 11pt]{article}
\usepackage{amssymb}
\usepackage{bm}
\usepackage{textcomp}
\usepackage{mathrsfs}
\usepackage{amsmath}
\usepackage{amsfonts}
\usepackage[T1]{fontenc}
\usepackage[utf8]{inputenc}    
\usepackage[T1]{fontenc}       
\usepackage{amsthm}
\usepackage{graphicx}
\usepackage{amsmath}

\usepackage{amsthm}
\usepackage{array}
\usepackage{graphicx}
\usepackage{cases}
\usepackage{enumerate}
\usepackage{clrscode}
\usepackage{listings}
\usepackage[ruled,vlined,english,linesnumbered]{algorithm2e}
\usepackage{url}
\usepackage{enumitem}
\usepackage{pgffor}
\usepackage{float}

\usepackage{xcolor}
\usepackage{soul} 

\usepackage{enumerate,enumitem,todonotes}
\makeatletter
\def\th@plain{%
  \itshape 
}
\makeatother

\makeatletter
\renewenvironment{proof}[1][\proofname]{\par
  \pushQED{\qed}%
  \normalfont \topsep6\p@\@plus6\p@\relax
  \trivlist
  \item[\hskip\labelsep
        \bfseries
    #1\@addpunct{.}]\ignorespaces
}{%
  \popQED\endtrivlist\@endpefalse
}
\makeatother

\newtheorem{theorem}{Theorem}[section]

\numberwithin{equation}{section}

\newtheorem{thm}{Theorem}[section]

\newtheorem{lemma}[thm]{Lemma}
\newtheorem{example}[thm]{Example}

\newtheorem{rem}[thm]{Remark}
\newtheorem{op}[thm]{Open Question}

\numberwithin{equation}{section}

\usepackage[varg]{txfonts}
\usepackage{graphicx, epsfig, subfigure}
\usepackage{enumerate} 
\usepackage[square, numbers, sort&compress]{natbib} 
\ifx\pdfoutput\undefined
 \usepackage[dvipdfm,%
  pdfstartview=FitH, 
  bookmarks=true,%
  bookmarksnumbered=true, 
  bookmarksopen=true, 
  plainpages=false,%
  pdfpagelabels,%
  colorlinks=true, 
  linkcolor=blue, 
  citecolor=blue,%
  urlcolor=black,
  pdfborder=001]{hyperref}
  \AtBeginDvi{}  
\else
 \usepackage[pdftex,%
  pdfstartview=FitH, 
  bookmarks=true,%
  bookmarksnumbered=true, 
  bookmarksopen=true, 
  plainpages=false,%
  pdfpagelabels,%
  colorlinks=true, 
  linkcolor=blue, 
  citecolor=blue,%
  urlcolor=black,
  pdfborder=001]{hyperref}
\fi

\numberwithin{equation}{section}

\title{\LARGE Between proper and square colorings of planar graphs with maximum degree at most four}

\author{
Xujun Liu\thanks{Department of Applied Mathematics, School of Mathematics and Physics, Xi'an Jiaotong-Liverpool University, Suzhou, Jiangsu Province, 215123, China, \texttt{xujun.liu@xjtlu.edu.cn}; the research of X. Liu was supported by the National Natural Science Foundation of China under grant No.~12401466.}
\and
Zihui Xu\thanks{School of Mathematics and Physics, Xi'an Jiaotong-Liverpool University, Suzhou, Jiangsu Province, 215123, China, \texttt{zihui.xu2503@student.xjtlu.edu.cn}.}
\and
Xin Zhang\thanks{School of Mathematics and Statistics, Xidian University, Xi'an, Shaanxi Province, 710126, China, \texttt{xzhang@xidian.edu.cn}.}
}

\date{}

\begin{document}

\maketitle

\begin{abstract}
An $i$-independent set is a vertex set whose pairwise distance is at least $i+1$. A proper (square) $k$-coloring of a graph $G$ is a partition of its vertex set into $k$ independent ($2$-independent) sets. A packing $(1^{j}, 2^k)$-coloring of a graph $G$ is a partition of $V(G)$ into $j$ independent sets and $k$ $2$-independent sets. It can be viewed as intermediate colorings between proper and square coloring. Wegner conjectured in 1977 that every planar graph with maximum degree at most four is square $9$-colorable. Bousquet, Deschamps, de Meyer, and Pierron proved an upper bound of $12$, which is the current best result toward the conjecture of Wegner. In this paper, we prove two analogue results that every planar graph with maximum degree at most four is packing $(1,2^{10})$-colorable and packing $(1^2,2^7)$-colorable. 
\end{abstract}

\section{Introduction}
An $i$-independent set is a vertex set whose pairwise distance is at least $i+1$. For a non-decreasing sequence $S = (s_1, \ldots, s_k)$ of positive integers, a packing $S$-coloring of a graph $G$ is a partition of $V(G)$ into $V_1, \ldots, V_k$ such that each $V_i$ is $s_i$-independent. This notion was introduced by Goddard and Xu~\cite{GX1}, and later studied by many researchers (e.g., see~\cite{BKL1, BF1, BKRW1, CL1, FKL1, GT1, LW1, LZZ1, MT1, MT2, ZM1}). We use $(s_1^{t_1}, \ldots, s_k^{t_k})$ to denote the tuple where $s_i$ appears with multiplicity $t_i$ for every $i$. For example, $(1,2,2,2,3,3)$ is denoted as $(1,2^3,3^2)$. A proper $k$-coloring is equivalent to a packing $(1^k)$-coloring, while a square $k$-coloring is equivalent to a packing $(2^k)$-coloring. The chromatic number, $\chi(G)$, of a graph $G$ is the minimum $k$ such that $G$ has a proper $k$-coloring. The minimum $k$ such that a graph $G$ has a square $k$-coloring is denoted by $\chi_2(G)$.

The famous Four-Color Theorem states that every planar graph is packing $(1^4)$-colorable. Wegner~\cite{W1} conjectured in 1977 that if $G$ is a planar graph of maximum degree at most $\Delta$, then
\[
\chi_2(G) \le 
\begin{cases} 
7 & \mbox{ if $\Delta = 3$}\\
\Delta + 5 & \mbox{ if $ \Delta\in\{4,5,6, 7\}$}\\
\left\lfloor \frac{3}{2} \Delta \right\rfloor + 1 & \mbox{ if $\Delta \ge 8$}. \\
\end{cases}
\]

Thomassen~\cite{T1}, and independently Hartke, Jahanbekam, and Thomas~\cite{HJT1} proved Wegner's conjecture for $\Delta = 3$. The conjecture remains open for all $\Delta \ge 4$. Bousquet, Deschamps, de Meyer, and Pierron~\cite{BDMP1} proved the current best upper bound of $12$ when $\Delta = 4$. van den Heuvel and McGuinness~\cite{HM1} showed that every planar graph $G$ with maximum degree $\Delta$ has $\chi_2(G) \le 2 \Delta + 25$. Amini, Esperet, and van den Heuvel~\cite{AEH1}, and independently Havet, van den Heuvel, McDiarmid, and Reed~\cite{HHMR1} showed that the conjecture holds asymptotically, namely, $\chi_2(G) \le \frac{3}{2} \Delta + o (\Delta)$ as $\Delta \to \infty$. The list version of square $k$-coloring is also well-studied (e.g., see~\cite{BLP1, CK1}). In particular, the result of Cranston and Kim~\cite{CK1} implies that every connected subcubic graph except the Petersen graph has a square $8$-coloring, and this bound is tight.

We focus on packing $(1^j,2^k)$-colorings, which can be viewed as intermediate colorings between proper coloring and square coloring. Extensive research exists for packing $(1^j, 2^k)$-coloring of subcubic graphs. Gastineau and Togni~\cite{GT1} proved that every subcubic graph has a packing $(1,2^6)$-coloring and a packing $(1^2,2^3)$-coloring. Their results are tight since the Petersen graph has no packing $(1,2^5)$-coloring and no packing $(1^2,2^2)$-coloring. Liu and Wang~\cite{LW1} proved that every subcubic planar graph is packing $(1,2^5)$-colorable. Their result is the best possible due to the existence of subcubic planar graphs that are not packing $(1,2^4)$-colorable. Very recently, Mortada and Togni~\cite{MT2} showed that every graph with maximum degree at most $\Delta$ is packing $(1^{\Delta-1},2^{\Delta})$-colorable. In contrast to the celebrated Four-Color Theorem, Choi and Liu~\cite{CL1} proved that for every fixed $k$, there is a planar graph that has no packing $(1^3,2^k)$-coloring. Gr\"otzsch's Theorem states that the minimum $g$, for which every planar graph with girth at least $g$ is packing $(1^3)$-colorable, is $4$. Choi and Liu~\cite{CL1} showed that the minimum $g$, for which every planar graph with girth at least $g$ has a packing $(1^2, 2^k)$-coloring for some finite $k$, is exactly $7$.  

In this paper, we initiate the study of packing $(1^j, 2^k)$-coloring for planar graphs with maximum degree at most four. We study the two cases in which $j$ is fixed to be $1$ or $2$. As a corollary of the result of Bousquet, Deschamps, de Meyer, and Pierron~\cite{BDMP1}, every planar graph with maximum degree at most four has a packing $(1,2^{11})$-coloring and a packing $(1^2, 2^{10})$-coloring. We first prove in Section 2 that every planar graph with maximum degree at most four is packing $(1,2^{10})$-colorable.

\begin{theorem}\label{maintheorem1}
Every planar graph with maximum degree at most four is packing $(1,2^{10})$-colorable. 
\end{theorem}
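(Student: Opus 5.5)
We argue by contradiction with a minimal counterexample $G$: a planar graph with $\Delta(G)\le 4$, not packing $(1,2^{10})$-colorable, and with $|V(G)|+|E(G)|$ minimum. Color $1$ is the ``proper'' color (an independent set $V_1$), and colors $2,\dots,11$ are ``square'' colors. Throughout, a vertex $v$ with an uncolored extension needs a color not forbidden by its neighborhood. Color $1$ is forbidden only if some neighbor has color $1$. A square color is forbidden if it appears within distance two. Since $|N_2(v)|\le 4+4\cdot 3=16$ in general, the whole difficulty is to save enough at reducible configurations. The key counting fact is this: if at most $9$ square colors appear within distance two of $v$, or if at most $10$ appear and no neighbor of $v$ has color $1$, then $v$ can be colored.

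\textbf{Structural step.} Minimality first gives basic properties. $G$ is connected. It has minimum degree at least $3$: a vertex $v$ of degree $\le 2$ sees at most $2+2\cdot 3=8$ vertices within distance two, so after deleting $v$ and coloring $G-v$ we extend directly. Next I would show, via a recoloring trick, that every vertex of degree $3$ has only degree-$4$ neighbors and lies on no short cycle. For example, when deleting a $3$-vertex $v$, its second neighborhood has at most $9$ vertices once one neighbor has degree $3$. If it has exactly $10$, I would try to recolor a neighbor $u$ of $v$ with color $1$, which is allowed when no neighbor of $u$ uses $1$, to free a square color. More generally, the recurring tool is this. If a vertex $w$ at distance $\le 2$ from $v$ has a square color but can be moved to color $1$ (none of its neighbors has color $1$), or to another square color, then this frees a color for $v$. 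Since color $1$ is extremely cheap to check, this gives extra flexibility compared with the square-coloring proofs of~\cite{BDMP1}. I would then establish reducibility of configurations involving short faces: adjacent triangles, triangles and $4$-faces incident to $3$-vertices, $4$-faces sharing edges, and so on. In each case I would delete a small set $S$, color $G-S$ (taking care that deletion does not shorten distances, e.g.\ by adding edges or contracting to preserve distance-two relations while keeping planarity and $\Delta\le4$), and then color $S$ greedily in a good order. A counting argument (Hall-type, or ordering the vertices with the most available colors last) shows the extension succeeds, reserving color $1$ for a vertex in $S$ whose neighbors are all in $S$ or non-$1$ colored.

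\textbf{Discharging step.} Give each vertex charge $d(v)-4$ and each face charge $d(f)-4$; by Euler's formula the total is $-8$. Vertices have nonnegative charge once degree $\le 2$ and isolated $3$-vertices are handled, so the negative charge sits on triangles and on $3$-vertices. Rules move charge from faces of length $\ge5$ to incident triangles and $3$-vertices. The reducible configurations guarantee every triangle has enough large faces nearby, so all final charges are nonnegative, a contradiction. Because $4$-regular planar graphs with many triangles, such as the octahedron-like structures, are the tight cases, I expect the main obstacle here. Concretely, it is proving that a triangle surrounded by $4$-faces, or clusters of triangles and $4$-faces in a $4$-regular region, is reducible. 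There every vertex has $|N_2|$ close to $12$, which exceeds $10$. My first attempt would be to exploit color $1$ heavily: delete a whole triangle/$4$-face cluster, color the rest, and then argue that in the remaining partial coloring at least two or three vertices of the cluster can take color $1$, possibly after shifting color $1$ off neighbors onto free square colors. Combined with the fact that vertices on a triangle share neighbors, this cuts the demand on square colors enough to finish with $10$. If that fails, I would fall back on identifying two vertices at distance $3$ in the cluster to force a color repetition, as in the $12$-color proof of~\cite{BDMP1}.
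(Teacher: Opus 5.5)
There is a genuine gap. Your framework is the same as the paper's: a minimal counterexample, charges $d(v)-4$ and $d(f)-4$, and charge sent from $5^+$-faces to triangles and $3$-vertices. But the proposal stops where the proof begins. You never fix the discharging rules or the list of reducible configurations, and you prove none of them.

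With the natural rules (each $3$-vertex gets $\frac12$ from each incident $5^+$-face, each triangle gets $\frac12$ from each adjacent $5^+$-face), the configurations actually needed are:
\begin{enumerate}
\item no two adjacent $3$-vertices;
\item no $3$-vertex on a triangle;
\item a $3$-vertex lies on at most one $4$-face;
\item no two triangles share an edge;
\item a triangle meets at most one other triangle;
\item a triangle shares an edge with at most one $4$-face;
\item a $5$-face pays for at most two of its $3$-vertices and adjacent triangles, since its charge is only $1$. So it is adjacent to at most two triangles, to at most one if it contains a $3$-vertex, and to none if it contains two $3$-vertices.
\end{enumerate}
You never mention $5$-faces, yet these are the hardest reducibility arguments. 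The $5$-face adjacent to three triangles needs a long chain of recolourings and colour swaps. It also uses that the $5$-cycle bounds a face, hence is not separating, to find a pair of vertices at distance at least three that can exchange colours. The obstacle you single out, a triangle surrounded by $4$-faces, covers only the triangle-side lemmas. Your suggested attacks on it, freeing colour $1$ at several cluster vertices or identifying vertices at distance three, are not carried out.

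Several concrete steps are also wrong or unsupported.
\begin{enumerate}
\item A $3$-vertex with a degree-$3$ neighbour can have $3+2+3+3=11$ vertices within distance two, not $9$. More importantly, deleting a $3$-vertex destroys the distance-two relations among its three neighbours, and you cannot restore them all by adding edges without exceeding degree four. The paper instead identifies the two adjacent $3$-vertices into one vertex and gives one of them the merged vertex's colour. It then uses the fact that every coloured vertex may be assumed to have colour $1$ in its closed neighbourhood to leave a free colour for the other.
\item Even for a $2$-vertex, plain deletion fails unless you add the edge between its two neighbours, or they may receive the same square colour. The danger is that deletion \emph{lengthens} distances, not that it shortens them.
\item You plan to show that a $3$-vertex lies on no short cycle and that $4$-faces sharing edges are reducible; neither is established, and neither is needed. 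Only two $4$-faces at a $3$-vertex are shown reducible. There you delete the $3$-vertex and join its two neighbours off the common edge; this preserves all distance-two relations among its neighbours and leaves at most $10$ vertices within distance two. Since $4$-faces carry charge $0$, nothing about adjacent $4$-faces in general is required.
\end{enumerate}
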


Let $k_i$ be defined as the minimum integer $k_i'$ such that every planar graph with maximum degree at most four is packing $(1^i,2^{k_i'})$-colorable, where $i \in \{1,2\}$. Theorem~\ref{maintheorem1} proves $k_1 \le 10$. We show $k_1 \ge 6$.

\begin{example}\label{example1}
The graph $G_1$ in Figure~\ref{sharp} is a planar graph with maximum degree four. Its independence number and diameter are both equal to two. Therefore, a $1$-color can be used at most twice, and a $2$-color can be used at most once. It follows that $G_1$ has no packing $(1, 2^5)$-coloring and no packing $(1^2, 2^3)$-coloring.
\end{example}

\begin{figure}
\vspace{-5mm}
\begin{center}
 \includegraphics[scale=0.7]{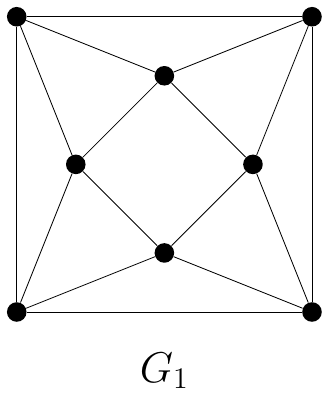}
 \vspace{-3mm}
\caption{An example for the sharpness of Theorems 1.1 and 1.2.}\label{sharp}
\end{center}
\vspace{-8mm}
\end{figure}

As a corollary of Theorem~\ref{maintheorem1}, every planar graph with maximum degree at most four is packing $(1^2,2^9)$-colorable. We further show that $k_2 \le 7$.

\begin{theorem}\label{maintheorem2}
Every planar graph with maximum degree at most four is packing $(1^2,2^7)$-colorable.    
\end{theorem}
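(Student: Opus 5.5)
We argue by minimal counterexample and discharging, following the usual approach to square-coloring bounds for planar graphs with $\Delta\le 4$. Let $G$ be a counterexample to Theorem~\ref{maintheorem2} with $|V(G)|+|E(G)|$ minimum. Then $G$ is connected, and every proper subgraph (and every planar graph obtained from $G$ by local operations that keep $\Delta\le 4$) is packing $(1^2,2^7)$-colorable. Denote the two $1$-colors by $a,b$ and the seven $2$-colors by $1,\dots,7$.

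The key extension count is the following. Let $H$ be a reduced graph, colored, and let $v$ be an uncolored vertex. Then $v$ can receive $a$ unless some neighbor already has $a$; the same holds for $b$. It can receive a $2$-color $c$ unless $c$ appears within distance two of $v$. So $v$ is blocked from both $1$-colors only if its at most four neighbors use both $a$ and $b$. In that case at most two neighbors carry $2$-colors. Hence the number of $2$-colors forbidden at $v$ is at most the number of $2$-colored vertices at distance at most two, and this is well below $7$ when $v$ has small degree or many neighbors share a $1$-color.

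The main lever is recoloring. If $v$ sees both $a$ and $b$ on neighbors $u,w$, we try to swap a $1$-colored neighbor to a $1$-color or a spare $2$-color, or use Kempe-type $a/b$ chains among the $1$-colored vertices. With this we will establish the reducible configurations:
\begin{enumerate}[label=(C\arabic*)]
\item no vertex of degree at most $2$;
\item no $3$-vertex adjacent to another $3$-vertex, or lying on a triangle or $4$-face;
\item for $4$-vertices, no $4$-vertex incident to two triangles, no adjacent $4$-vertices that are both incident to many short faces, and similar local restrictions.
\end{enumerate}
For each configuration we delete (or contract through) the vertex set, color the smaller graph by minimality, and then extend. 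During extension we greedily assign $1$-colors wherever possible, since a $1$-colored vertex does not consume $2$-color availability at distance two. We add edges between former distance-two pairs when needed to preserve the $2$-independence constraints while keeping planarity and $\Delta\le4$.

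Next we discharge. Give each vertex $v$ charge $d(v)-4$ and each face $f$ charge $d(f)-4$; by Euler's formula the total charge is $-8$. Only $3$-vertices and $3$-faces are negative. Faces of size at least $5$ send charge to incident $3$-vertices and adjacent triangles, and the reducible configurations ensure every element ends with nonnegative charge. This contradicts the total of $-8$.

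The main obstacle is the reducibility of configurations built from $4$-vertices on triangles. There a vertex $v$ has up to $12$ vertices within distance two. If $v$ cannot take a $1$-color, then at most two of these vertices are $1$-colored neighbors, so up to $10$ vertices may carry $2$-colors, which exceeds $7$. We must therefore show that among the second neighbors enough vertices carry $1$-colors, or that colors repeat. The plan is to first recolor the removed configuration so that second neighbors at distance exactly two may share a $1$-color (they are nonadjacent), and to argue via a counting of $a$-colored second neighbors after an $a/b$ swap on a component. We expect this step to require a careful case analysis, and possibly a computer-checked list of configurations, as in Bousquet et al.~\cite{BDMP1}.
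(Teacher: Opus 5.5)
Your framework (minimal counterexample, charges $d-4$ on vertices and faces, $5^+$-faces paying $3$-vertices and adjacent triangles) is the paper's. The gap is that the configuration list you would discharge against is wrong in both directions, and the reducibility proofs for the right list are the whole content of the theorem.

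Two of your items overreach and come with no argument. Take \emph{no $3$-vertex on a $4$-face} in (C2). The natural reduction deletes the $3$-vertex $u$ and adds edges so that its neighbours $u_1,u_2,u_3$ stay pairwise within distance two. But each $u_i$ is a $4$-vertex with room for only one new edge, so at most one edge can be added among them. A single $4$-face $uu_1xu_2$ covers only the pair $u_1,u_2$, so one pair involving $u_3$ stays uncontrolled. With two $4$-faces at $u$, two pairs are already covered, which is why the paper proves only Lemma~\ref{3vertex4cycle-2} (at most one $4$-face); that suffices for a $\frac12$-rule. Next, \emph{no $4$-vertex on two triangles} in (C3) excludes two triangles sharing an edge. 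The paper gets this only with eleven colors (Lemma~\ref{two3cycles}). With $2+7$ colors it settles for Lemmas~\ref{no333-2}, \ref{no334-2} and~\ref{bowtie-2}.

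In the other direction, configurations the discharging cannot do without are missing. Nothing on your list stops a triangle from bordering three $4$-faces, and then it keeps charge $-1$, since only $5^+$-faces give. The paper needs Lemma~\ref{no434-2} here, and for $5$-faces it also needs Lemmas~\ref{no5cycle3d3cycle3cycle-2} and~\ref{no5cycled3d33cycle-2}. \emph{Similar local restrictions} does not pin these down.

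More fundamentally, beyond (C1) you give no reducibility argument; the step you call the main obstacle is the proof. The paper beats your ``$10>7$'' count with the color-saving Remark~\ref{saveonecolor-2}. In any extension attempt one may assume that every colored vertex with a $1$-colored neighbour sees both $1$-colors in its closed neighbourhood, and that the two ends of every edge jointly see both. This forces enough $1$-colored vertices into the distance-two ball, and it is applied case by case with explicit recolorings. For the three large configurations (Lemmas~\ref{no434-2}, \ref{no5cycle3d3cycle3cycle-2}, \ref{no5cycled3d33cycle-2}), the paper proceeds as follows:
\begin{enumerate}
\item it deletes the whole configuration;
\item it adds edges among boundary vertices so that the needed distance-two relations survive in $G'$ with $\Delta\le 4$;
\item it checks all boundary precolorings up to symmetry by computer;
\item it resolves the remaining $8$, $11$ and $28$ cases by hand.
\end{enumerate}
Your Kempe-chain idea does not replace this. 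The vertices colored $a$ or $b$ induce a bipartite graph, and if an $a$-colored and a $b$-colored neighbour of $v$ lie in the same component, a swap frees neither color. Nothing in the planar structure prevents that.
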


Example~\ref{example1} also shows that $k_2 \ge 4$. We prove Theorem~\ref{maintheorem1} in Section 2 and Theorem~\ref{maintheorem2} in Section 3. In this paper, we say a vertex $v$ sees a $1$-color if $v$ has a neighbour using that color. Similarly, we say $v$ sees a $2$-color if there is a vertex $u$ using that $2$-color and $u$ is within distance two from $v$. A $2$-conflict refers to the situation where two vertices sharing the same $2$-color and are at distance at most $2$ in $G$.

\begin{figure}[ht]
\vspace{-30mm}
 \hspace{-30mm} \includegraphics[scale=1.05]{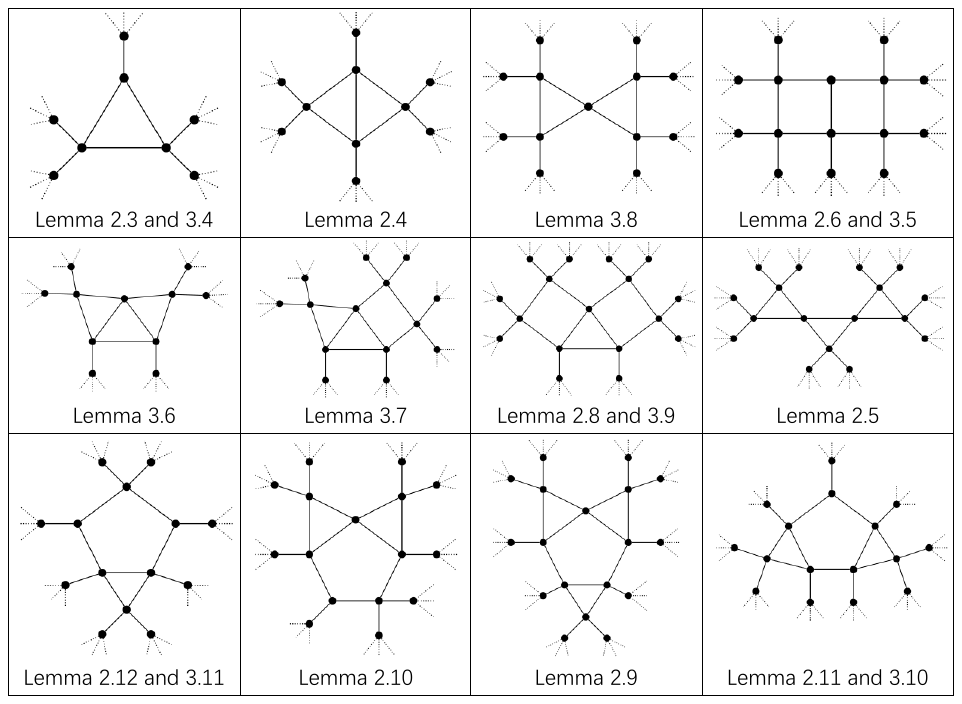}
  \vspace{-165mm}
\caption{Reducible Configurations for Theorems 1.1 and 1.2.}\label{configurations}
\vspace{-2mm}
\end{figure}

\section{Packing $(1,2^{10})$-coloring of planar graphs of maximum degree at most $4$}
In this section, we show that every planar graph with maximum degree at most four has a packing $(1,2^{10})$-coloring. We call a packing $(1,2^{10})$-coloring {\em good coloring} in this section. We use $1$ to denote the $1$-color and let $\{2_1,2_2,2_3,2_4,2_5,2_6,2_7,2_8,2_9,2_{10}\}$ be the set of $2$-colors.  

\textbf{Proof of Theorem~\ref{maintheorem1}:} Suppose not, i.e., there exist planar graphs with maximum degree at most four that have no packing $(1,2^{10})$-coloring. We take such a graph $G$ with minimum $|V(G)|+|E(G)|$.

\begin{lemma}\label{mindegree}
$\delta(G) \ge 3$.    
\end{lemma}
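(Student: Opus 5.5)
We argue by contradiction using the minimality of $G$. Suppose $G$ has a vertex $v$ with $d(v)\le 2$. Let $G'=G-v$. Then $G'$ is planar with maximum degree at most four and $|V(G')|+|E(G')|<|V(G)|+|E(G)|$, so by the choice of $G$ it has a good coloring $c$. We extend $c$ to $v$. The only new constraint is that $v$ must avoid conflicts with vertices at distance at most two from $v$ in $G$.

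Removing $v$ may increase distances between pairs of neighbours of $v$ (two neighbours $u,w$ of $v$ are at distance $2$ in $G$ through $v$ but possibly farther in $G'$). So the coloring of $G'$ restricted to $V(G)\setminus\{v\}$ might itself contain a $2$-conflict in $G$. To avoid this, when $d(v)=2$ with neighbours $u,w$, we instead take $G'=G-v+uw$ if $uw\notin E(G)$. This is obtained by contracting the edge $vu$, so it is planar; the degrees of $u$ and $w$ do not increase, and $|V|+|E|$ drops. If $uw\in E(G)$, we simply use $G-v$. In both cases $u,w$ are adjacent in $G'$, and any two vertices of $G-v$ within distance two in $G$ remain within distance two in $G'$: a path $x u v w$ in $G$ becomes $x u w$ in $G'$, and other paths avoid $v$. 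Hence $c$ restricted to $V(G)\setminus\{v\}$ is a valid partial good coloring of $G$. When $d(v)\le1$ no such issue arises, since $v$ lies on no shortest path of length $2$ between other vertices.

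Now we count the forbidden colors at $v$. If $d(v)\le 2$, then $v$ has at most $2$ neighbours and at most $2\cdot 3=6$ vertices at distance exactly two, so at most $8$ vertices lie within distance two of $v$. These vertices block at most $8$ colors in total. If $v$ can take color $1$ (no neighbour colored $1$) we are done. Otherwise some neighbour uses color $1$, and that neighbour does not block a $2$-color, so at most $7$ of the ten $2$-colors are blocked. In either case a color is available, so $v$ can be colored and $G$ has a good coloring, a contradiction.

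The main obstacle is the distance-preservation issue in the second paragraph, which the edge-addition trick handles. The counting itself has a large slack ($8<10$).
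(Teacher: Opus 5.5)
Your proof is correct and follows the paper's own argument: delete the vertex of degree at most two, adding the edge between its two neighbours in the degree-two case so that no $2$-conflict is created. Then color the smaller graph by minimality and extend, since at most eight vertices lie within distance two while eleven colors are available. Two small points, neither affecting correctness: your split into ``color $1$ is free'' versus ``a neighbour uses $1$'' is unnecessary, because eight vertices block at most eight of the eleven colors and any unblocked color is legal; and the path $xuvw$ you cite has length three, whereas the only relevant case is $uvw$ becoming the edge $uw$.
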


\begin{proof}
We first show there is no $1$-vertex. Suppose not, i.e., $u$ is a $1$-vertex in $G$ with its unique neighbour $u_1$. We delete $u$ from $G$ to obtain a planar graph $G'$ with $\Delta(G') \le 4$. By the minimality of $G$, $G'$ has a good coloring $f$. Since there are at most $4$ vertices within distance two from $u$ and we have in total $11$ colors, we can extend $f$ to $G$. Then we show $\delta(G) \ge 3$. Suppose not, i.e., $u$ is a $2$-vertex with neighbours $u_1,u_2$. We delete $u$ from $G$ and add the edge $u_1u_2$ (if it is not already in $G$) to obtain a planar graph $G'$ with $\Delta(G') \le 4$. By the minimality of $G$, $G'$ has a good coloring $f$. Since there are at most $8$ vertices within distance two from $u$ and we have in total $11$ colors, we can extend $f$ to $G$. Note that $f$ has no $2$-conflict. 
\end{proof}


\begin{lemma}\label{no33}
Two $3$-vertices cannot be adjacent.    
\end{lemma}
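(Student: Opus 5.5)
Suppose two $3$-vertices $u$ and $v$ are adjacent, and let $u_1,u_2$ be the other neighbours of $u$ and $v_1,v_2$ the other neighbours of $v$. We delete the edge $uv$ to obtain $G'$. It is planar, $\Delta(G')\le 4$, and $|V(G')|+|E(G')|<|V(G)|+|E(G)|$. By the minimality of $G$, $G'$ has a good coloring $f$. Removing an edge only increases distances, so $f$ can fail on $G$ in just two ways: an edge conflict ($u,v$ both colored $1$), or a $2$-conflict between two vertices whose distance dropped to at most $2$. The pairs whose distance can drop to at most $2$ are $\{u,v\}$, $\{u,v_i\}$ and $\{v,u_i\}$. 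So we uncolor $u$ and $v$ and recolor them greedily, $u$ first and then $v$.

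\textbf{Counting constraints.} In $G$, the vertex $u$ has at most $3+3\cdot 3=12$ vertices within distance two: its $3$ neighbours, plus at most $3$ further neighbours of each $u_i$. The neighbour $v$ is uncolored, and its other neighbours $v_1,v_2$ contribute at most $2$ vertices at distance two. The remaining count is
\[
|N(u)\setminus\{v\}| + |N(v)\setminus\{u\}| + \sum_{i=1,2}(d(u_i)-1)\le 2+2+6=10
\]
colored vertices within distance two of $u$. Each of these forbids at most one color, and at most $2$ of them are adjacent to $u$, which is all that matters for the $1$-color.

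\textbf{The tight case.} With $11$ colors this leaves at least one color for $u$. The trouble comes when we then color $v$: the same count gives $10$ colored vertices for $v$, plus $u$ itself, so $11$ constraints against $11$ colors. This is tight and is the main obstacle.

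To break it, I would first try to color $u$ with the $1$-color, if none of $u_1,u_2$ has color $1$. Then $u$ forbids only the $1$-color for $v$, and this is counted already only if $u$ is adjacent to $v$. Better, note that the $1$-color is forbidden to $v$ only by its neighbours. So the $1$-color can be blocked for $v$ only if some $v_i$ or $u$ uses it.

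The cleaner accounting is as follows. For $v$, the $2$-colors are forbidden by $2$-colored vertices within distance two, and the $1$-color is forbidden only if a neighbour has color $1$. Among the $11$ constraints on $v$, the vertices at distance exactly two that carry color $1$ block nothing. So $v$ is stuck only if every one of the $10$ $2$-colors appears within distance two of $v$ and the $1$-color appears on a neighbour. Similar slack exists for $u$.

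\textbf{Case split.} (a) If some vertex at distance exactly two from $v$ has color $1$, or two of them repeat a $2$-color, then there is slack and we finish. (b) Otherwise every $2$-color appears exactly once around $v$. In that case I would choose $u$'s color to coincide with an already-forbidden color for $v$. We color $u$ with $1$ if possible, since a neighbour of $v$ with color $1$ is tolerable when $v$ takes a $2$-color. Otherwise $u$ gets a $2$-color that is already seen by $v$ through $v_1$ or $v_2$'s neighbourhood, which are also within distance two of $u$ only through $v$.

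Alternatively, if the counting stays tight, I would instead contract: identify or add edges $u_1v_1$-type chords to keep $\Delta\le 4$ so that the coloring of $G'$ already forces distinct colors. The expected fallback is to first color $v$, then $u$, exploiting that at least one $u_i$ or $v_i$ is a $4$-vertex only in the worst case.
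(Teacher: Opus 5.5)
\textbf{There is a genuine gap: the tight case you identify is never resolved, and it cannot be resolved by choosing colors for $u$ and $v$ alone.} Your reduction itself is sound. Deleting $uv$ keeps $u_1,u_2$ (resp.\ $v_1,v_2$) at distance two through $u$ (resp.\ $v$), so $f$ restricted to $G-\{u,v\}$ is a partial good coloring of $G$. (A minor slip: $u$ has at most $11$ vertices within distance two, not $12$, since $v$ has only two further neighbours. Your count of $10$ colored ones is correct.)

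Here is why the extension fails for a fixed $f$. In your case (b), $u_1$ and $u_2$ are at distance exactly two from $v$, so they are not colored $1$, and your rule puts $1$ on $u$. But nothing prevents $f$ from placing all ten $2$-colors on $u_1,u_2,v_1,v_2$ and the six vertices of $(N(v_1)\cup N(v_2))\setminus\{v\}$; then $v$ has no color. Likewise $f$ may place all ten $2$-colors on $u_1,u_2,v_1,v_2$ and $(N(u_1)\cup N(u_2))\setminus\{u\}$. Then putting $1$ on $v$ fails, and giving both $u$ and $v$ $2$-colors fails too. So some vertex of $G-\{u,v\}$ must be recolored, and your proposal has no recoloring step. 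Giving $u$ a $2$-color ``already seen by $v$'' does not help, and the closing paragraph about contracting is a hedge, not an argument. Also, negating (a) does not imply that every $2$-color appears around $v$, though that is secondary.

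\textbf{The missing idea is the paper's color-saving recoloring} (its remark on saving a color). While $u,v$ are uncolored, any colored vertex with no neighbour colored $1$ may be recolored $1$. So you may assume each of $u_1,u_2,v_1,v_2$ has a vertex colored $1$ in its closed neighbourhood in $G-\{u,v\}$. Then each of $u$ and $v$ sees at most nine $2$-colors on the other colored vertices, and your route closes in three steps:
\begin{enumerate}
\item If neither $u_1$ nor $u_2$ is colored $1$, give $u$ the color $1$ and $v$ a free $2$-color.
\item Symmetrically, if neither $v_1$ nor $v_2$ is colored $1$, give $v$ the color $1$ and $u$ a free $2$-color.
\item Otherwise some $u_i$ and some $v_j$ are colored $1$. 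Then each of $u,v$ sees at most eight $2$-colors (fewer if $u_i=v_j$), so two distinct free $2$-colors exist.
\end{enumerate}
With this step, your edge-deletion argument becomes a valid alternative to the paper's proof. The paper instead identifies $u$ and $v$ into a vertex $u'$, so that $f(u')$ can be placed on one of them at no cost. It then applies the same recoloring trick, to $u'$ itself or to $v_1,v_2$, to find a color for the other.
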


\begin{proof}
Suppose not, i.e., $u, v$ are two $3$-vertices with $uv \in E(G)$. Let $N(u) = \{v, u_1, u_2\}$ and $N(v) = \{u, v_1, v_2\}$. We identify $u$ and $v$ (call this new vertex $u'$) to obtain a planar graph $G'$ with $\Delta(G') \le 4$. By the minimality of $G$, $G'$ has a good coloring $f$. If $f(u') = 1$, then we can use $1$ at $u$. Note that each of $v_1$ and $v_2$ must see color $1$ in $G'-u'$ at least once, since otherwise we recolor $v_1$ or $v_2$ with $1$. Thus, at most eight $2$-colors appear within distance two from $v$, and we can color $v$ with an available $2$-color. Therefore, we can assume $f(u') \neq 1$, say $f(u') = 2_1$. We may assume $1 \in \{f(u_1), f(u_2), f(v_1), f(v_2)\}$, say $f(u_1) = 1$, since otherwise we can recolor $u'$ with $1$, which is already done. We use $2_1$ at $u$. By a similar argument, $v$ has at least one available $2$-color to use. We can always extend $f$ to $G$. 
\end{proof}

By Lemma~\ref{mindegree} and~\ref{no33}, a $3$-cycle can have at most one $3$-vertex. 
We strengthen this result by showing that, in fact, no $3$-cycle can have a $3$-vertex. 

\begin{lemma}\label{3cycle4vertices}
Every vertex of a $3$-cycle must be a $4$-vertex.    
\end{lemma}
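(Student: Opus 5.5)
Suppose, for a contradiction, that $uvw$ is a $3$-cycle containing a $3$-vertex, say $u$. By Lemma~\ref{no33}, $v$ and $w$ are then $4$-vertices. Let $N(u)=\{v,w,x\}$. The plan is to contract the edge $uv$, colour the smaller graph by minimality, and then extend the colouring to $u$ by counting the vertices within distance two of $u$.

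\textbf{The reduced graph.} Let $G'$ be obtained from $G-u$ by adding the edge $vx$ if it is not already present. Equivalently, $G'$ is obtained by contracting $uv$ and deleting any resulting parallel edge, so $G'$ is planar. In passing from $G$ to $G'$, vertex $v$ loses $u$ and possibly gains $x$, and $x$ loses $u$ and possibly gains $v$. Hence $\Delta(G')\le 4$. Since $|V(G')|+|E(G')|<|V(G)|+|E(G)|$, the minimality of $G$ gives a good colouring $f$ of $G'$.

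\textbf{The restriction of $f$ is valid on $G-u$.}
\begin{itemize}
\item For the $1$-colour: every edge of $G-u$ is an edge of $G'$, so the $1$-class is independent in $G-u$.
\item For the $2$-colours: take any two vertices of $G-u$ at distance at most $2$ in $G$. If some shortest path between them avoids $u$, it survives in $G'$. Otherwise the two vertices lie in $N(u)=\{v,w,x\}$.
\begin{itemize}
\item $vw$ is an edge of $G'$.
\item $vx$ is an edge of $G'$ by construction.
\item $w$ and $x$ are at distance at most $2$ in $G'$ via $v$.
\end{itemize}
Thus no $2$-conflict arises among the vertices of $G-u$ when we return to $G$.
\end{itemize}

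\textbf{Extending $f$ to $u$.} We count the vertices within distance two of $u$ in $G$:
\begin{itemize}
\item its neighbours $v,w,x$;
\item at most two further neighbours of $v$ (besides $u,w$);
\item at most two further neighbours of $w$ (besides $u,v$);
\item at most three further neighbours of $x$.
\end{itemize}
This gives at most $3+2+2+3=10$ vertices. There are two cases.
\begin{itemize}
\item If none of $v,w,x$ has colour $1$, we colour $u$ with $1$.
\item Otherwise some neighbour of $u$ uses the $1$-colour. That vertex carries no $2$-colour, so at most $9$ vertices within distance two of $u$ carry $2$-colours. At least one of the ten $2$-colours is therefore free for $u$.
\end{itemize}
In both cases $f$ extends to a good colouring of $G$, contradicting the choice of $G$.

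The only delicate point is the choice of reduction. Simply deleting $u$ could let $x$ share a $2$-colour with $v$ or $w$, since those pairs were at distance two only through $u$. Contracting along the triangle edge $uv$ fixes this: the triangle makes $w$ already adjacent to $v$, so a single added edge $vx$ restores all the relevant distances without pushing any degree above four. The counting step, which is where the triangle saves a vertex compared with the general case, is then routine.
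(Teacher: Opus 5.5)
Your proof is correct and matches the paper's: the paper also deletes the $3$-vertex, joins its outside neighbour to one triangle neighbour, and extends by comparing at most ten vertices within distance two against eleven colours. Your version additionally spells out the planarity (via contraction), why the restricted colouring has no $2$-conflict in $G$, and the case split on the $1$-colour, all of which the paper leaves implicit.
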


\begin{proof}
Suppose $u_1u_2u_3u_1$ is a $3$-cycle with $u_1$ being a $3$-vertex. Let $N(u_1) = \{v_1, u_2, u_3\}$. We delete $u_1$ and add $v_1u_2$ (if it is not already in $G$) to obtain a planar graph $G'$ with $\Delta(G') \le 4$. By the minimality of $G$, $G'$ has a good coloring $f$. Since there are at most $10$ vertices within distance two from $u_1$ and we have $11$ available colors, we can extend $f$ to $G$. Note that $f$ has no $2$-conflict.
\end{proof}

\begin{lemma}\label{two3cycles}
Two $3$-cycles cannot share an edge.
\end{lemma}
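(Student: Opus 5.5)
We argue by contradiction, exactly as in the earlier lemmas. Suppose $uvw$ and $uvx$ are two $3$-cycles sharing the edge $uv$. By Lemma~\ref{3cycle4vertices}, all of $u,v,w,x$ are $4$-vertices. Write $N(u)=\{v,w,x,a\}$ and $N(v)=\{u,w,x,b\}$, where possibly $a=b$. We split into two cases: $wx\in E(G)$, so that $\{u,v,w,x\}$ induces a $K_4$, and $wx\notin E(G)$. In both cases we delete the edge $uv$ to get $G'=G-uv$. This graph is planar with $\Delta(G')\le 4$, so by minimality it has a good coloring $f$.

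We first identify what can fail when $f$ is viewed in $G$. Since $u$ and $v$ are still at distance two in $G'$ (through $w$), any $2$-colors on $u$ and $v$ are already distinct. Every path of length at most two in $G$ that uses $uv$ is either the edge $uv$ or one of the paths $u\,v\,b$, $v\,u\,a$, $u\,v\,w$, $u\,v\,x$. The last two are irrelevant because $w,x$ are adjacent to both $u$ and $v$ in $G'$. So the only possible conflicts are: $f(u)=f(v)=1$; $f(u)=f(b)$ with this a $2$-color; and $f(v)=f(a)$ with this a $2$-color. If none of these occurs we are done.

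Otherwise we uncolor $u$ and $v$ and recolor them greedily, choosing $u$ first and then $v$. The distance-two neighbourhood of $u$ in $G$, excluding $v$, has at most $3+2+2+3+1=11$ vertices: $w,x,a$, the two further neighbours of each of $w$ and $x$, the three further neighbours of $a$, and $b$. When $wx\in E(G)$, or when $a=b$, or when $a$ is adjacent to $w$ or $x$, this count drops to at most $10$. By symmetry the same bound holds for $v$, and then $v$ additionally sees $u$.

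The recoloring uses the $1$-color trick from Lemma~\ref{no33}. If no neighbour of $u$ has color $1$, we give $u$ the color $1$. If some neighbour of $u$ has color $1$, that vertex forbids no $2$-color, so $u$ sees at most $10$ distinct $2$-colors. Moreover, if two vertices in $N^2(u)$ have color $1$, or if some vertex in $N^2(u)$ colored with a $2$-color has no neighbour colored $1$, then we either save a color or recolor that vertex to $1$ to free its $2$-color, exactly as in Lemma~\ref{no33}. The same procedure is then applied to $v$. For $v$, one also has to check that giving both $u$ and $v$ the color $1$ is not forced: once $u$ receives color $1$, $v$ has a $1$-colored neighbour, so $v$ must take a $2$-color.

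The main obstacle is the tight count for $v$ after $u$ is colored, which can leave $v$ seeing up to $11$ colored vertices. To handle it, I would first try to show that the worst case forces a rigid structure. In that structure every vertex of $N^2(v)$ carries a distinct $2$-color and has a $1$-colored neighbour. The argument should then exploit the two triangles through $w$ and $x$: we would like one of $w,x$ to be recolorable to $1$, or to swap with $u$. If that fails, I would fall back on choosing which of $u,v$ to color first, since the bounds for $u$ and $v$ differ by the extra vertex $u$ seen by $v$.
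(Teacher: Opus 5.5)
\textbf{There is a genuine gap.} Your proposal stops exactly where the proof has to happen. The count at $v$ is left as ``I would first try to show\ldots'' followed by a fallback, so as written this is not a proof.

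Moreover, the greedy order you suggest is what creates the obstacle. If you give $u$ the color $1$ first, then $w$ and $x$ already see $1$ at $u$. So the recolor-to-$1$ trick can no longer be applied to $w$ or $x$, and their $1$-witness ($u$ itself) does not lie among the vertices that block $2$-colors at $v$. Only $N[b]\setminus\{v\}$ is then forced to contain a $1$. That can leave ten $2$-colored vertices with distinct colors among $w,x,a,b$, the two further neighbours of each of $w$ and $x$, and the three of $b$. This is precisely the deadlock you describe.

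Your general remark about vertices of $N^2(u)$ without a $1$-coloured neighbour does not rescue this. For a vertex at distance two from $u$, its $1$-neighbour may be at distance three from $u$, and then nothing is saved. The trick only pays off when applied to the \emph{neighbours} of the vertex being colored, and only while $u$ and $v$ are both still uncolored.

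\textbf{The missing idea} is to do all the recoloring before deciding which of $u,v$ (if either) gets $1$.
\begin{enumerate}
\item With $u,v$ uncolored, for each $y\in\{w,x,a,b\}$ let $S_y=N[y]\setminus\{u,v\}$. If $S_y$ has no vertex colored $1$, recolor $y$ with $1$. This is legal because all other neighbours of $y$ lie in $S_y$.
\item Afterwards each $S_y$ contains a $1$-colored vertex, so at most $|S_y|-1$ of its vertices carry $2$-colors.
\item The vertices other than $v$ within distance two of $u$ are exactly $S_w\cup S_x\cup S_a\cup\{b\}$. Since $|S_w|,|S_x|\le 3$ and $|S_a|\le 4$, these carry at most $2+2+3+1=8$ distinct $2$-colors. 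Hence $u$ has at least two available $2$-colors, and symmetrically so does $v$.
\item If none of $w,x,a$ is colored $1$, color $u$ with $1$ and $v$ with an available $2$-color. Argue symmetrically if none of $w,x,b$ is colored $1$. Otherwise give $u$ and $v$ distinct $2$-colors from their lists of size at least two.
\end{enumerate}
This closes your edge-deletion route uniformly, including the $K_4$ case and the case $a=b$. It would then be a valid, somewhat simpler alternative to the paper's argument. The paper instead identifies $u$ and $v$ into a single vertex $u'$ and copies $f(u')$ to one endpoint. When $f(u')=1$ it needs an extra step that uncolors that endpoint and recolors the two common neighbours with $1$.
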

\begin{proof}
Suppose $u_1u_2u_3u_1$ and $u_1u_2u_4u_1$ are two $3$-cycles sharing the edge $u_1u_2$. By Lemma \ref{3cycle4vertices}, $u_1$ and $u_2$ are 4-vertices. Let $N(u_1)=\{u_2,u_3,u_4,v_1\}$ and $N(u_2)=\{u_1,u_3,u_4,v_2\}$. We identify $u_1$ and $u_2$ (call this new vertex $u'$) to obtain a planar graph $G'$ with $\Delta(G') \le 4$. By the minimality of $G$, $G'$ has a good coloring $f$. We extend $f$ to $G$. If $f(u')$ is a $2$-color, say $f(u') = 2_1$, then we can use $2_1$ at $u_2$. Note that each of $v_1$, $u_3$, and $u_4$ must see color $1$ in $G'-u'$ at least once, since otherwise we can recolor $v_1$, $u_3$ or $u_4$ with $1$ so that this condition holds. Thus, at most nine $2$-colors appear within distance two from $u_1$, and we can color $u_1$ with an available $2$-color. Therefore, we can assume $f(u') = 1$. We color $u_2$ with $1$. Note that $v_1$ must see color $1$ in $N[v_1]$ at least once, since otherwise we recolor $v_1$ with $1$. Moreover, $u_3,u_4$ can only see color $1$ at $u_2$, since otherwise at most nine $2$-colors appear within distance two from $u_1$, and we can color $u_1$ with an available $2$-color. Then we uncolor $u_2$ and recolor $u_3, u_4$ with $1$, and the number of available $2$-colors at $u_1, u_2$ is at least $2,1$. We can then color $u_2$ and $u_1$ in that order.
\end{proof}

By Lemma~\ref{two3cycles}, two $3$-cycles cannot share an edge. However, they may share a vertex. In Lemma~\ref{superbowtie}, we show that a $3$-cycle can share vertices with at most one $3$-cycle.

\begin{lemma}\label{superbowtie}
A $3$-cycle can share vertices with at most one $3$-cycle.   
\end{lemma}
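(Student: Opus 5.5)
The structure is forced by the earlier lemmas. Let $T = u_1u_2u_3u_1$ be a $3$-cycle, and suppose it shares vertices with two other $3$-cycles $T'$ and $T''$. By Lemma~\ref{3cycle4vertices}, every vertex of $T$, $T'$, $T''$ is a $4$-vertex. By Lemma~\ref{two3cycles}, $T'$ and $T''$ share no edge with $T$, so each of them meets $T$ in exactly one vertex. A vertex of $T$ has degree $4$ and already has two neighbours on $T$. Its other two neighbours can therefore carry at most one further triangle through that vertex, and they carry one only if they are adjacent. So $T'$ and $T''$ pass through distinct vertices of $T$, say $T' = u_1 a_1 b_1 u_1$ and $T'' = u_2 a_2 b_2 u_2$. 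The closed neighbourhoods are then $N(u_1) = \{u_2,u_3,a_1,b_1\}$ and $N(u_2) = \{u_1,u_3,a_2,b_2\}$. The vertices $a_1,b_1,a_2,b_2$ need not all be distinct from one another or from $u_3$'s neighbours, but Lemma~\ref{two3cycles} rules out most coincidences.

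I will reduce on the edge $u_1u_2$, following the pattern of Lemma~\ref{two3cycles}. Identifying $u_1$ and $u_2$ into one vertex $u'$ gives $u'$ the neighbours $u_3, a_1, b_1, a_2, b_2$, which is too many. So I will instead delete the edge $u_1u_2$, or delete $u_1$ together with suitable edge additions. A cleaner option is to delete $u_1$ and add the edge $u_2a_1$ or $b_1u_3$, whichever keeps the degrees at most $4$. The degree bound is the problem: deleting $u_1$ frees one degree at each of $u_2,u_3,a_1,b_1$, and one added edge uses up exactly two of these freed degrees. Planarity holds because the added edge can be drawn through the face left by $u_1$. Take a good coloring $f$ of $G'$ by minimality. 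The vertices within distance two of $u_1$ in $G$ are $u_2,u_3,a_1,b_1$ together with their other neighbours. Because of the triangles, $u_2$, $u_3$, $a_1$, $b_1$ each contribute at most two outer vertices, and $u_2$'s outer neighbours include $a_2,b_2$. So there are at most $4+8 = 12$ vertices at distance at most two. The added edge ensures that the $2$-colors in $N(u_1)$ remain distinct, since the pair $u_2,a_1$ (which is not at distance two in $G'$ without $u_1$) is now adjacent. The triangles $T$ and $T'$ already make $u_2u_3$ and $a_1b_1$ adjacent.

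To finish the extension, I will count. I want to use the $1$-color at $u_1$ if none of $u_2,u_3,a_1,b_1$ has color $1$. Otherwise some neighbour has color $1$, and that vertex does not occupy a $2$-color. I also want to exploit that the $4$-clique-like structure forces $1$-colors: each of $a_2,b_2$ lies in triangle $T''$, so at most one of $u_2,a_2,b_2$ is colored $1$. With $11$ colors against at most $12$ constraints, I need to save two. One saving comes from a repeated $1$ among the vertices at distance two. The other comes from recoloring, as in Lemma~\ref{two3cycles}: if some vertex among $a_2,b_2$ and the far neighbours does not see color $1$, recolor it to $1$, which frees a $2$-color.

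The main obstacle is this final counting and recoloring step. I must show that, in every case, either color $1$ is available at $u_1$, or color $1$ appears at least twice among the twelve vertices at distance at most two (or can be made to by recoloring). I would first try the argument of Lemma~\ref{two3cycles}: uncolor $u_1$ and a neighbour, and push color $1$ onto vertices of $T''$ or $T'$ whose neighbourhoods avoid $1$. Then I would color the uncolored vertices greedily in the order of increasing slack.
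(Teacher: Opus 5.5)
There is a genuine gap, in two places.

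First, your reduction is not valid as stated. In $G-u_1+u_2a_1$ the neighbours of $u_1$ induce only the path $u_3u_2a_1b_1$; the other pairs are non-edges by Lemma~\ref{two3cycles}. So $u_3$ and $b_1$ may be at distance three in $G'$, and $f$ may give them the same $2$-color. Both are adjacent to $u_1$, so this is a $2$-conflict in $G$. Your claim that the added edge keeps the $2$-colors on $N(u_1)$ distinct therefore fails for this pair, and likewise for $u_2,a_1$ if you add $b_1u_3$ instead.

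Your own degree count gives the fix. Deleting $u_1$ frees one degree at each of $u_2,u_3,a_1,b_1$, and adding \emph{both} $u_2a_1$ and $u_3b_1$ uses exactly these. Since $T$ and $T'$ cannot interleave in the rotation at $u_1$ (otherwise $a_1b_1$ would cross $T$), you can label $T'$ so that both edges are drawn inside the face left by $u_1$. Then $N(u_1)$ spans the $4$-cycle $u_2u_3b_1a_1u_2$ in $G'$, so its $2$-colors are pairwise distinct. The neighbours colored $1$ then form at most one of the diagonal pairs $\{u_2,b_1\}$, $\{u_3,a_1\}$. This is the paper's reduction.

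Second, the extension, which is where the lemma is actually proved, is only announced, and its stated target is off by one. When a neighbour of $u_1$ has color $1$, a $2$-color is free at $u_1$ only if at most nine $2$-colors occur on the twelve vertices. So you need two savings beyond that neighbour's $1$ (for instance three occurrences of $1$), not "color $1$ appears at least twice".

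More importantly, neither your bound of $12$ nor the savings you list depend on $T''$. As written, the plan would equally reduce a plain bowtie, which the paper does not claim. The role of $T''$ is that it shrinks the distance-two ball of $u_2$ to twelve vertices, so $u_2$ can be uncolored and recolored with slack. The paper's cases are:
\begin{itemize}
\item If no neighbour of $u_1$ has color $1$, use $1$ at $u_1$.
\item If $f(u_3)=f(a_1)=1$, then $a_2$ sees color $1$; otherwise recolor it with $1$, which puts three $1$'s around $u_1$. After uncoloring $u_2$, the vertices $u_1,u_2$ have at least $1,2$ available $2$-colors, so color them in that order.
\item If $f(u_2)=f(b_1)=1$, either a neighbour of $u_3$ outside $T$ has color $1$, giving three $1$'s around $u_1$, or you move color $1$ from $u_2$ to $u_3$ and give $u_2$ an available $2$-color. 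You are then in the previous case with $b_1$ in place of $a_1$.
\item If exactly one neighbour has color $1$, the two neighbours of $u_1$ in the other triangle must each see color $1$. Otherwise, recolor one of them with $1$; you either fall into an earlier case or directly obtain three $1$'s.
\end{itemize}
Without this case analysis, or an equivalent one, your outline does not establish the lemma.
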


\begin{proof}
Let $u_1u_2u_3u_1$ and $u_1u_4u_5u_1$ be two $3$-cycles sharing the vertex $u_1$. By Lemma~\ref{3cycle4vertices}, $u_2,u_3,u_4,u_5$ are all $4$-vertices. Let $N(u_2) = \{u_1, u_3, u_6, u_7\}, N(u_3) = \{u_1, u_2, u_8, u_9\}$. In order to prove the lemma, we need to show $u_6u_7 \notin E(G)$. Suppose not, i.e., $u_6u_7 \in E(G)$. By Lemma~\ref{two3cycles}, $u_4,u_5 \notin \{u_6, u_7\}$ and $u_3u_4, u_2u_5 \notin E(G)$. We delete $u_1$ and add $u_2u_5, u_3u_4$ to obtain a planar graph $G'$ with $\Delta(G') \le 4$. By the minimality of $G$, $G'$ has a good coloring $f$. We extend $f$ to $G$.

\textbf{Case 1:} All of $f(u_2), f(u_3), f(u_4), f(u_5)$ are not $1$. We color $u_1$ with $1$, and we are done. 

\textbf{Case 2:} Two of $f(u_2), f(u_3), f(u_4), f(u_5)$ are $1$. We have two subcases up to symmetry.

\textbf{Case 2.1:} $f(u_3) = f(u_5) = 1$. One of $u_6$ and $u_7$ must see color $1$ at least once, since otherwise we can recolor $u_6$ or $u_7$ to $1$ and color $u_1$ with an available $2$-color. We uncolor $u_2$, and $u_1, u_2$ have at least $1,2$ available $2$-colors. We color them in order.

\textbf{Case 2.2:} $f(u_2) = f(u_4) = 1$. If $1 \in \{f(u_8), f(u_9)\}$, then $u_1$ has an available $2$-color to use, and we are done. Otherwise, we uncolor $u_2$, and color $u_3$ with $1$ and $u_2$ with an available $2$-color. By symmetry, we can then apply the same argument as in Case 2.1 to $u_3$ and $u_4$.

\textbf{Case 3:} One of $f(u_2), f(u_3), f(u_4), f(u_5)$ is $1$. If $f(u_2) = 1$ or $f(u_3) = 1$, then, by Case 2, each of $u_4,u_5$ must see color $1$ at least once, and $u_1$ has an available $2$-color. Otherwise, $f(u_4) = 1$ or $f(u_5) = 1$. By Case 2, each of $u_2,u_3$ must see color $1$ at least once, and $u_1$ has an available $2$-color.
\end{proof}

\begin{lemma}\label{3vertex4cycle}
A $3$-vertex can belong to at most one $4$-face.
\end{lemma}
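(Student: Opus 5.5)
We argue by reducibility, in the same way as Lemma~\ref{3vertex4cycle}'s predecessors Lemma~\ref{mindegree} and Lemma~\ref{3cycle4vertices}. Suppose, for a contradiction, that a $3$-vertex $u$ lies on two $4$-faces. Let $N(u)=\{v_1,v_2,v_3\}$. By Lemma~\ref{no33} each $v_i$ is a $4$-vertex. Only three faces are incident with $u$, and any two of them share one of the edges $uv_i$. So, after relabelling, the two $4$-faces are $uv_1xv_2$ and $uv_2yv_3$. In particular $v_1$ and $v_2$ have the common neighbour $x\neq u$, and $v_2$ and $v_3$ have the common neighbour $y\neq u$.

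\textbf{Construction of $G'$.} Delete $u$. If $v_1v_3\notin E(G)$, add the edge $v_1v_3$. The vertices $v_1$ and $v_3$ lie on a common face after $u$ is deleted, namely the face obtained by merging the third face at $u$. Hence $G'$ is planar. Each of $v_1,v_3$ lost the neighbour $u$ and gained at most one new neighbour, so $\Delta(G')\le 4$. By minimality, $G'$ has a good coloring $f$.

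\textbf{Why $f$ is valid on $G-u$.} Every pair of vertices at distance at most two in $G$ that does not use $u$ as a midpoint is still at distance at most two in $G'$. The only pairs that use $u$ as a midpoint are pairs of its neighbours. These are handled as follows:
\begin{itemize}
\item $v_1$ and $v_2$ are at distance at most two in $G'$ through $x$;
\item $v_2$ and $v_3$ are at distance at most two in $G'$ through $y$;
\item $v_1$ and $v_3$ are adjacent in $G'$.
\end{itemize}
Adjacency in $G-u$ is a subset of adjacency in $G'$, so the $1$-color class stays independent. Therefore $f$ restricted to $G-u$ has no $2$-conflict and no monochromatic edge.

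\textbf{Counting.} The vertices within distance two of $u$ are $v_1,v_2,v_3$ together with the other neighbours of the $v_i$. Each $v_i$ has at most $3$ neighbours besides $u$, giving at most $9$ such incidences. The vertex $x$ is counted for both $v_1$ and $v_2$, and $y$ is counted for both $v_2$ and $v_3$. So at most $7$ distinct vertices lie at distance exactly two from $u$. If $x=y$, or if the $4$-faces force further coincidences, the count is only smaller. Hence at most $10$ vertices lie within distance two of $u$, so they use at most $10$ of the $11$ colors. We give $u$ a color not used on any of them. If that color is $1$, it does not appear on $N(u)$, so $u$ is properly colored. If it is a $2$-color, it appears nowhere within distance two of $u$. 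Thus $f$ extends to a good coloring of $G$, a contradiction.

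\textbf{Main obstacle.} The delicate step is the justification that $G'$ is planar and that all three pairs of neighbours of $u$ remain within distance two. This is the only place where the two $4$-faces are used: they supply the paths $v_1xv_2$ and $v_2yv_3$, so only one extra edge is needed. That edge is exactly what keeps $\Delta(G')\le 4$ and preserves planarity.
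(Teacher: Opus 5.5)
Your proposal is correct and follows essentially the same route as the paper: delete the $3$-vertex, add the edge between the two neighbours that are not joined through a $4$-face (this edge is absent anyway by Lemma~\ref{3cycle4vertices}), use the two $4$-faces to keep the other neighbour pairs within distance two, and observe that at most $10$ vertices lie within distance two of $u$. The only difference is cosmetic: the paper splits the last step into ``use $1$ if no neighbour has color $1$, otherwise at most nine $2$-colors are seen,'' whereas your direct pigeonhole over all $11$ colors is equivalent.
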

\begin{proof}
Suppose $u_1u_2u_3u_4u_1$ and $u_3u_4u_5u_6u_3$ are two $4$-faces sharing the edge $u_3u_4$ with $u_4$ being a $3$-vertex. We claim $u_5 \neq u_1$ and $u_2 \neq u_6$, since otherwise one of $u_1u_2u_3u_4u_1$ and $u_3u_4u_5u_6u_3$ is a separating $4$-cycle and thus is not a $4$-face. By Lemma~\ref{3cycle4vertices}, $u_1u_5 \notin E(G)$, $u_2 \neq u_5$, and $u_1 \neq u_6$. We delete $u_4$ from $G$ and add the edge $u_1u_5$ to obtain a planar graph $G'$ with $\Delta(G') \le 4$. By the minimality of $G$, $G'$ has a good coloring of $f$. If $f(u_1), f(u_3), f(u_5)$ are all $2$-colors, then we color $u_4$ with $1$. Otherwise, at most nine $2$-colors can appear within distance two from $u_4$. We color $u_4$ with an available $2$-color.
\end{proof}

\begin{rem}\label{saveonecolor}
We use the following color-saving argument very often. Let $G$ be a graph and $f$ be a partial good coloring. Let $u$ be a vertex such that $u$ has been assigned a color by $f$.

At least one vertex in $N[u]$ is colored with $1$ by $f$, since otherwise we recolor $u$ with $1$.

\end{rem}


\begin{lemma}\label{no434}
A $3$-face can share edges with $4$-faces via at most one edge.
\end{lemma}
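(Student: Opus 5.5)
Suppose, for a contradiction, that a $3$-face $u_1u_2u_3u_1$ shares the edges $u_1u_2$ and $u_2u_3$ with $4$-faces $u_1u_2xyu_1$ and $u_2u_3zwu_2$; every other configuration is symmetric to this one. By Lemma~\ref{3cycle4vertices} all of $u_1,u_2,u_3$ are $4$-vertices. Since $u_2$ has degree four, $N(u_2)=\{u_1,u_3,x,w\}$. Lemma~\ref{two3cycles} gives $x\neq u_3$ and $w\neq u_1$. Lemma~\ref{superbowtie}, together with the usual argument that a separating $4$-cycle cannot bound a face, gives $x\ne w$ and lets us assume the relevant vertices are distinct. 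We may also assume $u_1u_3$ is the only chord that could cause problems.

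\textbf{Reduction.} We delete $u_2$ and add the edges $u_1w$ and $u_3x$. This keeps the graph planar: $u_1$ and $w$ lie on a common face after the deletion, and so do $u_3$ and $x$, because the two $4$-faces and the $3$-face merge into one face. Degrees do not increase, since $u_1,u_3,x,w$ each lose the edge to $u_2$ and gain at most one new edge. If either edge is already present, it would create a $3$-cycle sharing an edge with $u_1u_2u_3$ or with another triangle, contradicting Lemma~\ref{two3cycles}. By minimality, $G'$ has a good coloring $f$. The added edges ensure that $u_1,u_3,x,w$ receive pairwise distinct $2$-colors where required, so the restriction of $f$ to $G-u_2$ has no $2$-conflict in $G$. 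The vertices within distance two of $u_2$ are its $4$ neighbours and at most $2\cdot 3+2\cdot 2$ further vertices; the saving comes from the face structure, since $y$ is a common neighbour of $u_1$ and $x$, $z$ is a common neighbour of $u_3$ and $w$, and $u_1u_3$ is an edge. So at most about $10$ vertices lie at distance at most two from $u_2$, and possibly fewer.

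\textbf{Extending to $u_2$.} If none of $u_1,u_3,x,w$ has color $1$, we color $u_2$ with $1$. Otherwise some vertex at distance at most two from $u_2$ has color $1$. Then at most $9$ of the $2$-colors are blocked, so one of the ten $2$-colors is free at $u_2$. The coloring of the vertices near $u_2$ is the crux here. The hard case is when exactly ten distinct $2$-colors appear within distance two of $u_2$ and $u_2$ sees color $1$; this needs at least $11$ vertices, which the reduced second neighbourhood does not contain.

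\textbf{Main obstacle.} If the count does not give a contradiction directly, I would fall back on the recoloring style of Lemma~\ref{superbowtie}. Apply Remark~\ref{saveonecolor} to one of the vertices $x,w,y,z$. Then either some vertex carrying a $2$-color can be recolored $1$, which frees a $2$-color, or a vertex colored $1$ is already present near $u_2$, so that a $1$-colored vertex makes the count at most nine. I would then split into cases according to which of $u_1,u_3,x,w$ receive color $1$; at most two of them can, since they are pairwise adjacent in $G'$ apart from the pairs $\{x,w\}$, $\{u_1,x\}$ and $\{u_3,w\}$. In each case I would uncolor a neighbour and recolor it in order, as was done in Case 2 of Lemma~\ref{superbowtie}. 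Getting this count and the case analysis exactly right is where I expect most of the work to lie.
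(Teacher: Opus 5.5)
Your reduction step fails, and the later case analysis cannot repair it.

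\textbf{The two added edges are not planar.} With your labels the rotation at $u_2$ is $x,u_1,u_3,w$. The face $u_1u_2xy$ makes $u_2x,u_2u_1$ consecutive, the triangle makes $u_2u_1,u_2u_3$ consecutive, and the face $u_2u_3zw$ makes $u_2u_3,u_2w$ consecutive. So $u_1w$ and $u_3x$ are exactly the two diagonals. After deleting $u_2$, the merged face has boundary $x,y,u_1,u_3,z,w,\dots$ in cyclic order. The chords $u_1w$ and $u_3x$ interleave on this boundary and cannot both be drawn inside it, so $G'$ need not be planar.

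\textbf{The added edges do not prevent $2$-conflicts.} Even ignoring planarity, $x$ and $w$ are in general not within distance two in $G'$: their neighbourhoods there are $\{y,u_3,\dots\}$ and $\{z,u_1,\dots\}$. So $f$ may give $x$ and $w$ the same $2$-color, which becomes a $2$-conflict in $G$ through $u_2$. Your claim that $f$ restricted to $G-u_2$ is a good partial coloring of $G$ is therefore false.

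\textbf{Deleting $u_2$ alone cannot work in general.} To keep $\Delta\le 4$, each of $x,u_1,u_3,w$ may gain at most one new edge, so the new edges form a matching on $N(u_2)$. Every planar choice is contained in $\{xw\}$ or in $\{xu_1,u_3w\}$, or is a single diagonal. Each of these leaves some pair of neighbours of $u_2$ at distance three.

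\textbf{What the paper does instead.} It deletes the whole triangle (your $u_1,u_2,u_3$). For each deleted vertex it joins that vertex's two remaining neighbours: $xw$, $ya$ and $zb$, where $a,b$ are the fourth neighbours of $u_1,u_3$. These pairs are consecutive along the merged face, so planarity is kept. Every outer vertex loses and gains exactly one edge, so $\Delta\le 4$ is kept. Every pair of surviving vertices at distance at most two in $G$ stays at distance at most two in $G'$, so a good coloring of $G'$ restricts to a good partial coloring of $G$. The cost is having to color three mutually adjacent vertices. The paper does this by splitting on whether one of $x,y,z,w$ has color $1$, using Remark~\ref{saveonecolor} to force enough available $2$-colors, and finishing with Hall's Theorem.

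As an aside, your count at $u_2$ is also off: up to $12$ vertices lie within distance two of $u_2$, not $10$. That part on its own looks salvageable. Applying Remark~\ref{saveonecolor} to each of $u_1,u_3,x,w$ appears to leave at most nine $2$-colored vertices in that ball once some neighbour of $u_2$ has color $1$. The reduction is the real obstruction.
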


\begin{proof}
Let $u_1u_2u_3u_4u_1$ be a $4$-face and $u_1u_4u_5u_1$ be a $3$-face such that they share the edge $u_1u_4$. By Lemma~\ref{3cycle4vertices}, $u_1, u_4, u_5$ are all $4$-vertices. Let $N(u_1)=\{u_2, u_4, u_5, u_7\}$, $N(u_4)=\{u_1, u_3, u_5, v_4\}$, and $N(u_5)=\{u_1, u_4, u_6, v_5\}$. By Lemma~\ref{two3cycles}, $v_5,u_6 \notin \{u_2, u_3, u_7, v_4\}$ and $u_7 \notin \{u_3, v_4\}$. We need to prove $u_6u_7\notin E(G)$. Suppose not, i.e., $u_6u_7\in E(G)$. We delete $u_1, u_4, u_5$ and add $u_2u_7, u_3v_4, u_6v_5$ (if they are not already in $G$) to obtain a planar graph $G'$ with $\Delta(G')\le4$. By the minimality of $G$, $G'$ has a good coloring $f$. 


\textbf{Case 1:} At least one of $u_2, u_3, u_6, u_7$ is colored with $1$. By symmetry, we may assume $1 \in \{f(u_2), f(u_3) \}$. By Remark~\ref{saveonecolor}, each of $v_4, v_5, u_6, u_7$ must see color $1$ at least once. Thus, the number of available $2$-colors at $u_1, u_4, u_5$ is at least $2, 1, 2$. We split the proof depending on whether $f(u_2) = 1$ or $f(u_3) = 1$. 

\textbf{Case 1.1:} $f(u_2) = 1$. Then $f(u_3) \neq 1$. We claim $f(v_4) = 1$, since otherwise we color $u_4$ with $1$, and $u_1, u_5$ with $2$-colors (they have at least $2,2$ available $2$-colors). However, the number of available $2$-colors at $u_1, u_4, u_5$ is at least $3, 1, 3$. We are done by Hall's Theorem.

\textbf{Case 1.2:} $f(u_3) = 1$. Then $f(u_2) \neq 1$. We claim $f(u_7) = 1$, since otherwise we color $u_1$ with $1$, and $u_4, u_5$ with $2$-colors (they have at least $1,2$ available $2$-colors). Then $f(v_5) = 1$, since otherwise we color $u_5$ with $1$, and $u_1, u_4$ with $2$-colors (they have at least $2,1$ available $2$-colors). However, the number of available $2$-colors at $u_1, u_4, u_5$ is at least $3, 3, 2$. We are done by Hall's Theorem.
    


\textbf{Case 2:} None of $u_2, u_3, u_6, u_7$ is colored with $1$. By Case 1, each of $u_2, u_3, u_6, u_7$ sees color $1$ in its neighbourhood in $G$. We color $u_1$ with $1$. By Remark~\ref{saveonecolor}, $u_4, u_5$ have at least $2,2$ available $2$-colors. We are done by Hall's Theorem.  
\end{proof}

By Lemma~\ref{superbowtie}, a $5$-face cannot be adjacent to more than three $3$-faces and there is only one way it can be adjacent to three $3$-faces (See Figure~\ref{configurations}). We show it is also impossible.

\begin{lemma}\label{no5cycle3cyclebowtie}
A $5$-face can be adjacent to at most two $3$-faces.
\end{lemma}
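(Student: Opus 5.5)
We first pin down the unique configuration permitted by the preceding lemmas, and then reduce it as in Lemmas~\ref{superbowtie} and~\ref{no434}. Let $F=u_1u_2u_3u_4u_5u_1$ be a $5$-face adjacent to three $3$-faces. By Lemma~\ref{two3cycles}, two $3$-faces cannot sit on the same edge of $F$ in a way that forces a shared edge. By Lemma~\ref{superbowtie}, no $3$-face meets two others. So the three $3$-faces sit on pairwise "far" edges of $F$. Since $F$ has only five edges, two of them, say those on $u_1u_2$ and $u_3u_4$, are separated by a single edge, and the third lies on $u_4u_5$ or $u_5u_1$. It then shares the vertex $u_4$ or $u_1$ with a $3$-face already on $F$. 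Lemma~\ref{superbowtie} allows this once but forbids a chain, so up to symmetry the configuration is the one in Figure~\ref{configurations}. There, $u_1u_2x$, $u_3u_4y$ and $u_4u_5z$ are the $3$-faces, and the bowtie at $u_4$ uses $u_3u_4$ and $u_4u_5$. By Lemma~\ref{3cycle4vertices}, all of $u_1,\dots,u_5,x,y,z$ are $4$-vertices. By Lemmas~\ref{two3cycles} and~\ref{superbowtie}, these vertices are distinct, and no extra adjacencies exist among them (for example $u_2u_3,u_5u_1$ are the only edges of $F$ besides the triangle edges, and $x\ne y,z$).

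The reduction is to delete the central vertex $u_4$, which has neighbours $u_3,u_5,y,z$. We add two edges, say $u_3z$ and $u_5y$ (or $yz$ together with a chord keeping planarity inside the face), so that all vertices within distance two of $u_4$ remain pairwise within distance two in $G'$. This makes $f$ free of $2$-conflicts when restricted back. If needed, we also delete $u_5$, adding an edge between its remaining neighbours on $F$ and outside, so that degrees stay at most four. By minimality, $G'$ has a good coloring $f$, and we extend it. If none of $u_3,u_5,y,z$ is colored $1$, we color $u_4$ with $1$. Otherwise some neighbour of $u_4$ has color $1$, so at most nine $2$-colors can be seen at $u_4$ from its $3+4\cdot 3-\ldots$ second-neighbourhood. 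Here we count using the fact that the triangles $u_3u_4y$ and $u_4u_5z$ make the ball of radius two around $u_4$ small: the triangles collapse several second neighbours, giving at most $4+8=12$ vertices with at least two coincidences. Then Remark~\ref{saveonecolor}, applied to neighbours, forces additional $1$-colors in the ball, and each such $1$-color frees a $2$-color.

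The main obstacle is this counting step. Simply deleting $u_4$ leaves up to about ten $2$-colored vertices at distance at most two, so an available $2$-color is not immediate. I would handle it as in Lemma~\ref{no434}, by case analysis on which of $u_3,u_5,y,z$ (and $u_1,u_2$) carry color $1$. In each case we either recolor some vertex to $1$, justified by Remark~\ref{saveonecolor}, or uncolor one more vertex (say $u_5$ or $u_3$) and finish with Hall's Theorem on the two or three uncolored vertices. The $5$-face is what supplies the extra slack: $u_5$ and $u_1$ are adjacent, and $u_1$ lies in the triangle $u_1u_2x$. Consequently the distance-two balls of $u_4$ and $u_5$ overlap heavily, which is what should give each uncolored vertex enough available $2$-colors.
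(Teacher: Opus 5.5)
\textbf{Verdict: there is a genuine gap.} The configuration and the choice of vertex to delete are right, but the extension of the coloring, which is where all the difficulty of this lemma lies, is missing, and the counting you offer for it is false.

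\textbf{Set-up.} You identify the configuration correctly. Up to relabelling it is the paper's: the bowtie centre is the vertex opposite the lone third triangle (your $u_4$, the paper's $u_1$), and deleting that centre is also the paper's reduction. Three details need fixing.
\begin{itemize}
\item Your first choice of new edges is not planar. The rotation at $u_4$ is $u_3,u_5,z,y$: the face $F$ lies between $u_3$ and $u_5$, and the two triangles lie between $u_3,y$ and between $u_5,z$. So $u_3z$ and $u_5y$ are crossing diagonals.
\item The correct pair is $u_3u_5$ and $yz$. It keeps every degree at most four, so no further deletion of $u_5$ is needed.
\item The earlier lemmas do not exclude every extra adjacency among your named vertices. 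For example, $xy\in E(G)$ is possible, and the paper has to handle such coincidences inside the argument.
\end{itemize}

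\textbf{The missing extension argument.} Apart from $u_4$, its ball of radius two contains $12$ vertices. In $G'$ (with $u_3u_5$ and $yz$ added) the neighbours $u_3,u_5,z,y$ span a $4$-cycle. So at most two of them get color $1$, namely an opposite pair, by symmetry $u_5$ and $y$. The other ten vertices can carry all ten $2$-colors exactly once; with only one $1$-colored neighbour there are even eleven $2$-colored vertices. So ``at most nine $2$-colors'' fails precisely in the case that matters.

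Your proposed tools do not rescue this tight case.
\begin{itemize}
\item Remark~\ref{saveonecolor} frees nothing. Every $2$-colored vertex of the ball is adjacent to $u_5$ or $y$, or may see color $1$ outside the ball.
\item Uncoloring one more vertex and invoking Hall's Theorem also fails. After uncoloring $u_3$, the adjacent vertices $u_4$ and $u_3$ may each have only the old color of $u_3$ available.
\end{itemize}

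\textbf{What the paper does instead.} It gets through this case only by a chain of color exchanges. In its labels these are $u_6$ with $v_6$, then $u_2$ with $u_6$, and finally $u_4$ with $u_7$ followed by recoloring $u_3$. The exchanges are driven by forced information about which $2$-colors occur around $v_6,w_6,v_7,w_7$ and in $N[u_8]$, so the third triangle is used essentially. They also rely on a planarity argument: since $u_1u_2u_3u_4u_5$ is a face and not a separating $5$-cycle, at least one of the pairs $\{u_3,u_7\}$, $\{u_4,u_6\}$ is at distance at least three. That is what makes the last exchange legal.

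Nothing of this kind is in your proposal. The sentence that the overlapping balls ``should give each uncolored vertex enough available $2$-colors'' is exactly what has to be proved, and without such recolorings it is false.
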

\begin{proof}
Let $u_1u_2u_3u_4u_5u_1$ be a $5$-face. Suppose to the contrary that it is adjacent to three $3$-faces. By Lemma~\ref{superbowtie}, we may assume $u_1u_2u_6u_1, u_1u_5u_7u_1$, and $u_3u_4u_8u_3$ are three $3$-faces. By Lemma~\ref{superbowtie}, $u_6,u_7 \notin \{u_3, u_4, u_8\}$, $u_8 \notin \{u_2, u_5\}$, and $u_2u_8, u_5u_8 \notin E(G)$. By Lemma~\ref{two3cycles}, $u_6 \neq u_7$ and $u_2u_5,u_2u_7,u_5u_6, u_6u_7\notin E(G)$. By Lemma~\ref{3cycle4vertices}, $u_1,u_2,u_3,u_4,u_5,u_6,u_7,u_8$ are $4$-vertices. We delete $u_1$ and add $u_6u_7, u_2u_5$ to obtain a planar graph $G'$ with $\Delta(G')\le4$. By the minimality of $G$, $G'$ has a good coloring $f$. 

\textbf{Case 1:} All of $f(u_2), f(u_5), f(u_6), f(u_7)$ are not $1$. We color $u_1$ with $1$, and we are done.

\textbf{Case 2:} Two of $f(u_2), f(u_5), f(u_6), f(u_7)$ are $1$. Let $N(u_2)=\{u_1,u_3,u_6,v_2\}, N(u_3)=\{u_2,u_4,u_8,v_3\}, N(u_4)=\{u_3,u_5,u_8,v_4\}, N(u_5)=\{u_1,u_4,u_7,v_5\}, N(u_6)=\{u_1,u_2,v_6,w_6\}, N(u_7)=\{u_1,u_5,v_7,w_7\}$, and $N(u_8)=\{u_3,u_4,v_8,w_8\}$. By Lemma~\ref{two3cycles}, $v_3\ne v_4$, $v_2\notin\{v_6,w_6\}$ and $v_5\notin\{v_7,w_7\}$. By symmetry, we may assume $f(u_5) = f(u_6) = 1$. At most ten $2$-colors appear within distance two from $u_1$. We must have all ten $2$-colors appear exactly once at $u_2,v_2,u_3, u_4,v_5,v_6,w_6,u_7,v_7,w_7$, since otherwise we are done. Without loss of generality, we may assume $f(u_2)=2_1, f(u_3)=2_2, f(u_4)=2_3,f(u_7)=2_4,f(v_2)=2_5, f(v_5)=2_6, f(v_6)=2_7, f(w_6)=2_8, f(v_7)=2_9, f(w_7)=2_{10}$.

We claim $u_6u_8 \notin E(G)$, since otherwise, we uncolor $u_2$, and, by Remark~\ref{saveonecolor}, $u_1,u_2$ have at least $1,2$ available $2$-colors. We color them in order. Similarly, $u_7u_8 \notin E(G)$ (we need to switch the colors of $u_5$ and $u_7$ first). Note that either color $1$ appears in $N(v_6)$ or color $2_7$ appears in $N(w_6)$, since otherwise we can switch the colors of $u_6$ and $v_6$, recolor $u_2$ with $1$, and color $u_1$ with $2_1$. Similarly, we have either color $1$ appears in $N(w_6)$ or color $2_8$ appears in $N(v_6)$. Moreover, $2_3,2_6,2_9,2_{10}\in f(N(v_6)\cup N(w_6))$, since otherwise we can recolor $u_6$ with one of them, recolor $u_2$ with $1$ and color $u_1$ with $2_1$. Thus, we can switch the color of $u_2$ and $u_6$. Note that $u_3$ and $u_7$ can be at a distance of at most two, either due to $u_7u_3 \in E(G)$ or $u_7v_3 \in E(G)$. Similarly, $u_4$ and $u_6$ can be at a distance of at most two, either due to $u_6u_4 \in E(G)$ or $u_6v_4 \in E(G)$. We claim that at least one pair of $u_3,u_7$ and $u_4,u_6$ is at distance at least three. Suppose not, we either have (1) $u_7u_3 \in E(G)$ and $u_6u_4 \in E(G)$, or (2) $u_7u_3 \in E(G)$ and $u_6v_4 \in E(G)$, or (3) $u_7v_3 \in E(G)$ and $u_6u_4 \in E(G)$, or (4) $u_7v_3 \in E(G)$ and $u_6v_4 \in E(G)$. However, in each of the above-mentioned four cases, $u_1u_2u_3u_4u_5u_1$ is a separating $5$-cycle instead of a $5$-face.

Without loss of generality, we may assume that $u_7v_3\notin E(G)$. By Remark~\ref{saveonecolor}, each of $u_8,v_3$ and $v_4$ must see color $1$ in $G'-u'$ at least once. Note that color $2_1,2_5,2_7,2_8,2_9,2_{10}$ must be in $N[v_4]\cup N[u_8]\cup \{v_3\}$, since otherwise we can recolor $u_4$ with one of them and color $u_1$ with $2_3$. Similarly, color $2_4,2_6,2_7,2_8,2_9,2_{10}$ must be in $N[v_3]\cup N[u_8]\cup \{v_4\}$. Therefore, $\{f(u_8),f(v_3),f(v_4),f(v_8),f(w_8)\}=\{1,2_7,2_8,2_9,2_{10}\}$ and $1\notin\{f(v_3),f(v_4)\}$. Moreover, each of $v_6,w_6,v_7$ and $w_7$ must see color $1$ in $G'-u'$ at least once, since otherwise we recolor  $v_6,w_6,v_7$ or $w_7$ with $1$ and $u_1$ have an available $2$-color. And $2_2,2_5,2_7,2_8\in f(N(v_7)\cup N(w_7))$, since otherwise we can recolor $u_7$ with one of them and color $u_1$ with $2_4$. Recall that $2_3,2_6,2_9,2_{10}\in f(N(v_6)\cup N(w_6))$. Since the distance between $u_3u_7$ is at least three, we can switch the colors of $u_4$ and $u_7$, recolor $u_3$ with $2_3$ and color $u_1$ with $2_2$.

\textbf{Case 3:} Only one of $f(u_2), f(u_5), f(u_6), f(u_7)$ is $1$. By symmetry, we may assume $1\in\{f(u_2),f(u_6)\}$. By Case 2 (we are free to choose whether we want to switch the colors of $u_5$ and $u_7$ or $u_2$ and $u_6$), $u_5,u_7$ must see $1$. Thus, by Remark~\ref{saveonecolor}, $u_1$ always has one available $2$-color. We are done.
\end{proof}

By Lemma~\ref{3cycle4vertices} and~\ref{superbowtie}, if a $5$-face has a $3$-vertex then it can be adjacent to at most two $3$-faces. We show in two steps (Lemma~\ref{no5cycled3bowtie} and~\ref{no5cycle3d3cycle3cycle}) that it can be adjacent to at most one $3$-face. See Figure~\ref{configurations}.

\begin{lemma}\label{no5cycled3bowtie}
If a $5$-face has a $3$-vertex, then it cannot be adjacent to two $3$-faces that share a vertex.
\end{lemma}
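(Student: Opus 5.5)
We set up the configuration as follows. Let $u_1u_2u_3u_4u_5u_1$ be a $5$-face containing a $3$-vertex, and suppose it is adjacent to two $3$-faces sharing a vertex. By Lemma~\ref{3cycle4vertices}, every vertex of a $3$-cycle is a $4$-vertex. By Lemma~\ref{superbowtie}, the only possibility is that the two $3$-faces are $u_1u_2u_6u_1$ and $u_1u_5u_7u_1$, sharing $u_1$ (a bowtie at $u_1$). Then $u_1,u_2,u_5,u_6,u_7$ are $4$-vertices, so the $3$-vertex is $u_3$ or $u_4$; by symmetry let it be $u_3$. Lemma~\ref{no33} gives that $u_4$ is a $4$-vertex. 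Lemma~\ref{two3cycles} gives $u_6\neq u_7$ and $u_2u_5,u_2u_7,u_5u_6,u_6u_7\notin E(G)$. Planarity and the facial structure give the remaining distinctness conditions, for instance $u_6,u_7\notin\{u_3,u_4\}$. Write $N(u_3)=\{u_2,u_4,v_3\}$.

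The reduction is modeled on Lemma~\ref{superbowtie} and Lemma~\ref{no5cycle3cyclebowtie}. Delete $u_1$ and add $u_2u_5$ and $u_6u_7$; these keep $\Delta\le 4$ and preserve planarity, since they can be drawn inside the faces around $u_1$. Minimality gives a good coloring $f$ of $G'$, which we extend to $u_1$. If none of $u_2,u_5,u_6,u_7$ has color $1$, we give $u_1$ color $1$. Otherwise the added edges force exactly one or two of these vertices to have color $1$: at most one from the pair $\{u_2,u_5\}$ and at most one from $\{u_6,u_7\}$. The cases are as follows.
\begin{enumerate}
\item \textbf{Only one vertex colored $1$.} We argue as in Case~3 of Lemma~\ref{no5cycle3cyclebowtie}. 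By Remark~\ref{saveonecolor}, applied to the relevant vertices, enough $1$'s appear in the second neighbourhood of $u_1$ to leave a free $2$-color for $u_1$.
\item \textbf{Two vertices colored $1$, one of them $u_2$.} Here the second $1$ is $f(u_7)=1$. The neighbourhood of $u_1$ within distance two contains at most $10$ vertices with $2$-colors, so we may assume all ten $2$-colors appear exactly once.
\item \textbf{Two vertices colored $1$, namely $f(u_5)=f(u_6)=1$.} Again we may assume all ten $2$-colors appear exactly once around $u_1$.
\end{enumerate}

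The new ingredient in the two-$1$ cases is the $3$-vertex $u_3$. It has only three neighbours, so the set of vertices within distance two of $u_3$ is small: at most $3+2\cdot 3=9$ other vertices after uncoloring. In the case $f(u_5)=f(u_6)=1$, we uncolor $u_3$ and try either to recolor $u_3$ with $1$, or to recolor $u_3$ with a $2$-color different from its old one and give $u_1$ the freed color. The vertices within distance two of $u_3$ are $u_2,u_4,v_3$, the other neighbours of $u_2$ (namely $u_1,u_6$ and $v_2$), the neighbours of $u_4$, and the neighbours of $v_3$. This gives at most $10$ colored vertices, and several of them, for instance $u_6$, carry color $1$. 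So $u_3$ has at least two candidate $2$-colors, and one of them differs from $f(u_3)$. Freeing $f(u_3)$ then makes it available for $u_1$, provided no other vertex near $u_1$ has that color, which holds by the ``exactly once'' assumption.

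If instead $f(u_2)=1$, then $u_3$ is adjacent to a $1$-colored vertex. We then recolor $u_2$ with a $2$-color, following the uncolor-$u_2$ argument of Case~2.1 of Lemma~\ref{superbowtie}. Here we use Remark~\ref{saveonecolor} at $u_6$ and $v_2$. Then $u_1$ and $u_2$ have at least $1$ and $2$ available $2$-colors respectively, and we color them in order.

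The main obstacle is the counting in the two-$1$ cases. We must check that recoloring $u_3$ (or $u_2$) genuinely frees a color for $u_1$ rather than creating a new $2$-conflict at $u_1$. This relies on the fact that $u_3$ lies at distance two from $u_1$ and that the ten $2$-colors are each used exactly once around $u_1$. If the direct count falls short, we fall back on switching the colors of $u_2$ and $u_6$ (or $u_5$ and $u_7$), as in Lemma~\ref{no5cycle3cyclebowtie}, to reach the favourable subcase.
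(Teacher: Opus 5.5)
There is a genuine gap in the two-$1$ subcase $f(u_2)=f(u_7)=1$; the rest follows the paper. Your reduction (delete $u_1$, add $u_2u_5$ and $u_6u_7$) is the paper's. In the subcase $f(u_5)=f(u_6)=1$ you uncolor the $3$-vertex $u_3$, hand $f(u_3)$ to $u_1$, and move $u_3$ to another $2$-color; this is the paper's Case~2 argument.

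In the subcase $f(u_2)=f(u_7)=1$ you instead ``uncolor $u_2$ as in Case~2.1 of Lemma~\ref{superbowtie}'', and that analogy does not transfer. In Lemma~\ref{superbowtie} the uncolored vertex carried a $2$-color, and that color is what became free for $u_1$. Here $u_2$ carries the $1$-color, so uncoloring it frees nothing. The vertex $u_1$ is still adjacent to $u_7$ with $f(u_7)=1$, and it still sees all ten $2$-colors on the other ten vertices of its distance-two ball. Suppose you meant to use Remark~\ref{saveonecolor} to recolor $u_6$ with $1$ and give $f(u_6)$ to $u_1$. Then $u_2$ can no longer return to $1$. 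The Remarks you cite (at $u_6$ and $v_2$) only force three $1$'s among the twelve colored vertices of $u_2$'s ball. That guarantees just one free $2$-color at $u_2$, and it may be exactly $f(u_6)$, which is the only option for $u_1$. So the claimed ``at least $1$ and $2$ available $2$-colors'' is unsupported. Your one-$1$ case inherits this gap: pushing it to two $1$'s by recoloring a neighbour of $u_1$ can land in this subcase, or in the pair $\{u_2,u_5\}$, which you do not list.

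The fix, which is what the paper does, is to uncolor the $3$-vertex in every two-$1$ case. You never need $u_3$ to take $1$, only a second $2$-color. The condition $f(u_2)=1$ actually helps, since $u_2$ lies in $u_3$'s ball.

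Your count also needs correcting. The colored vertices within distance two of $u_3$ are $u_2,u_4,v_3,u_6,v_2$, the three other neighbours of $u_4$, and up to three other neighbours of $v_3$. That is $11$, not $10$, so two $1$'s may leave only $f(u_3)$ itself, and you need a third.
\begin{itemize}
\item If $f(u_5)=f(u_6)=1$, take $u_5$, $u_6$, and a $1$ in $N[v_3]\setminus\{u_3\}$.
\item If $f(u_2)=f(u_7)=1$, take $u_2$, a $1$ in $N[u_4]\setminus\{u_3\}$, and a $1$ in $N[v_3]\setminus\{u_3\}$.
\end{itemize}
Each extra $1$ comes from Remark~\ref{saveonecolor}, and any coincidence among these vertices only shrinks the ball. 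Then $u_3$ has a $2$-color other than $f(u_3)$, and $u_1$ takes $f(u_3)$. The same count, with the Remark supplying any missing $1$, works whichever two neighbours of $u_1$ carry $1$. This also closes the one-$1$ case.
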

\begin{proof}
Suppose not, i.e., there is such a structure in our graph $G$. Let $u_1u_2u_3u_4u_5u_1$ be a $5$-face with $u_4$ being a $3$-vertex and $u_1u_2u_6u_1$ and $u_1u_5u_7u_1$ be three $3$-faces. Let $N(u_4)=\{u_3, u_5, v_4\}$. Since $u_1u_2u_3u_4u_5u_1$ is a $5$-face, $u_6 \neq u_3$. By Lemma~\ref{3cycle4vertices} and~\ref{two3cycles}, $u_6 \notin \{u_4, u_5\}$, $u_7 \notin \{u_2, u_3, u_4\}$, $u_2u_5,u_6u_7 \notin E(G)$. By Lemma~\ref{3cycle4vertices} and~\ref{no33}, $u_1, u_2, u_3, u_5, u_6, u_7$ are all $4$-vertices. We delete $u_1$ and add $u_6u_7, u_2u_5$ to obtain a planar graph $G'$ with $\Delta(G') \le 4$. By the minimality of $G$, $G'$ has a good coloring $f$. 

\textbf{Case 1:} All of $f(u_2), f(u_5), f(u_6), f(u_7)$ are not $1$. We color $u_1$ with $1$, and we are done.

\textbf{Case 2:} Two of $f(u_2), f(u_5), f(u_6), f(u_7)$ are $1$. One of $u_2, u_6$ is $1$ and one of $u_5, u_7$ is $1$. By Remark~\ref{saveonecolor}, at most ten $2$-colors appear within distance two from $u_1$. Thus, all the ten $2$-colors must appear exactly once within distance two from $u_1$. We uncolor $u_4$. By Remark~\ref{saveonecolor}, $u_1, u_4$ have at least $1,2$ available $2$-colors. We are done by Hall's Theorem.  




\textbf{Case 3:} Only one of $f(u_2), f(u_5), f(u_6), f(u_7)$ is $1$. By Case 2 and Remark~\ref{saveonecolor}, $u_1$ always has one available $2$-color. We are done.
\end{proof}

\begin{lemma}\label{no5cycle3d3cycle3cycle}
If a $5$-face has a $3$-vertex, then it cannot be adjacent to two $3$-faces that do not share a vertex.
\end{lemma}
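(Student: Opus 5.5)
We argue by contradiction, as in Lemma~\ref{no5cycled3bowtie}. Let $u_1u_2u_3u_4u_5u_1$ be a $5$-face containing a $3$-vertex that is adjacent to two $3$-faces with no common vertex. Two disjoint triangle edges of a $5$-cycle must share no endpoint, and a $3$-vertex cannot lie on a $3$-cycle (Lemma~\ref{3cycle4vertices}). So, up to symmetry, the $3$-faces are $u_1u_2u_6u_1$ and $u_3u_4u_7u_3$, and the $3$-vertex is $u_5$, the only vertex of the face not on a triangle. Let $N(u_5)=\{u_1,u_4,v_5\}$. By Lemmas~\ref{3cycle4vertices}, \ref{no33}, \ref{two3cycles} and~\ref{superbowtie}, the vertices $u_1,u_2,u_3,u_4,u_6,u_7$ are all $4$-vertices. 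These lemmas, together with the fact that the $5$-cycle bounds a face, also give the distinctness and non-adjacency facts we need: $u_6\neq u_7$, $u_6,u_7\notin V(C)$, $u_1u_4\notin E(G)$, $u_6u_7\notin E(G)$, and so on.

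The reduction deletes the $3$-vertex $u_5$ and adds the edge $u_1u_4$, which keeps $G'$ planar with $\Delta(G')\le 4$ since $u_5$ had degree~$3$. If needed, we also add an edge such as $v_5u_1$ or $v_5u_4$ so that every pair of vertices within distance two of $u_5$ in $G$ stays within distance two in $G'$. By minimality $G'$ has a good coloring $f$, and the added edges ensure there are no $2$-conflicts when we return to $G$. Now $u_5$ has at most $3+6=9$ vertices within distance two (degree sum $4+4+\le4$ minus overlaps). In fact $u_1$ and $u_4$ each contribute three further vertices, $v_5$ contributes up to three, and there is overlap at $u_2,u_3$ and the added edge, so the count is at most about $10$. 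If none of $u_1,u_4,v_5$ is colored~$1$, we color $u_5$ with~$1$. Otherwise at least one neighbour carries the $1$-color, so at most nine or ten of the second-neighbourhood slots carry $2$-colors. We then use Remark~\ref{saveonecolor} on $u_1,u_4,v_5$ and their neighbours to argue that some vertex in $N^2(u_5)$ is colored $1$ or that two of them repeat a $2$-color, which frees a $2$-color for $u_5$.

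The main obstacle is the tight case: exactly one of $u_1,u_4$ (say $u_4$) is colored $1$, and all ten $2$-colors appear exactly once on $N^2(u_5)\setminus\{u_4\}$. Here I would mimic Case~2 of Lemma~\ref{no5cycle3cycle3cycle}'s predecessor, Lemma~\ref{no5cycle3cycle3cycle}'s analogue Lemma~\ref{no5cycle3cycle} (i.e. Lemma~\ref{no5cycle3cycle3cycle}-style swaps as in Lemma~\ref{no5cycle3cycle3cycle}), namely the recoloring in Lemma~\ref{no5cycle3cycle3cycle}. Concretely, the swaps are:
\begin{enumerate}
\item Uncolor $u_1$ (or $u_3$). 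By Remark~\ref{saveonecolor} and the triangle $u_1u_2u_6$, $u_1$ then has at least two available $2$-colors while $u_5$ has at least one, so Hall's Theorem colors both.
\item Alternatively, exchange the colors of two triangle vertices, such as $u_2,u_6$ or $u_3,u_7$, to move the $1$-color next to $u_5$ onto a vertex at distance three from $u_5$.
\end{enumerate}
The triangle structure makes $u_2$ and $u_6$ (respectively $u_3$ and $u_7$) have nearly identical second neighbourhoods, so such swaps stay valid, exactly as in Lemma~\ref{two3cycles} and Lemma~\ref{no5cycle3cycle3cycle}.

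I expect the hardest step to be showing that some swap never creates a $2$-conflict. That requires pinning down which extra adjacencies, such as $u_6u_3$ or $u_7u_1$, would turn the $5$-face into a separating $5$-cycle, in the same way as the four-case argument in Lemma~\ref{no5cycle3cycle3cycle}.
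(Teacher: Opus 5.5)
There is a genuine gap, and it sits exactly where the paper's proof does its work. You cannot add $v_5u_1$ or $v_5u_4$. Both $u_1$ and $u_4$ are $4$-vertices because they lie on triangles (Lemma~\ref{3cycle4vertices}), and after deleting $u_5$ and adding $u_1u_4$ each is back to degree $4$. Also $v_5$ is a $4$-vertex by Lemmas~\ref{mindegree} and~\ref{no33}. Deleting $u_5$ frees only one degree slot at each of $u_1,u_4,v_5$, so the new edges among them form a matching, and at most one of the pairs $u_1u_4$, $u_1v_5$, $u_4v_5$ can be made adjacent. Hence the coloring $f$ of $G'$ may have $f(v_5)\in\{f(u_1),f(u_4)\}$ equal to a $2$-color. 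That is a $2$-conflict in $G$, so your claim that the added edges rule out $2$-conflicts is false.

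This conflict is the whole difficulty of the lemma, and the paper handles it as follows, in your labels:
\begin{itemize}
\item Assume $f(v_5)=f(u_4)=2_1$.
\item The $1$-color must appear on $u_3$, $u_7$, or the fourth neighbour $v_4$ of $u_4$; otherwise recolor $u_4$ with $1$, which removes the conflict.
\item Uncolor $u_4$. Then $f(u_1),f(u_2)\neq 1$, since otherwise $u_4$ has a free $2$-color; so all ten $2$-colors are tight.
\item Color $u_5$ with $1$ and split into three short subcases according to which of $u_3,u_7,v_4$ carries that $1$: uncolor $u_1$ and color $u_4$, then $u_1$; shift the $1$ from $u_7$ to $u_3$; or note that $1$ then appears near both $u_3$ and $u_7$.
\end{itemize}
None of this appears in your proposal.

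The rest of the sketch also needs repair.
\begin{itemize}
\item The count is wrong. The distance-two set of $u_5$ can have $3+3+3+3=12$ vertices, with no forced overlap at $u_2,u_3$, so counting alone does not suffice.
\item What works in the conflict-free case is Remark~\ref{saveonecolor} applied to $u_1$, $u_4$ and $v_5$. Each must see $1$ in its own closed neighbourhood in $G-u_5$ (otherwise recolor it with $1$). This removes three slots and leaves at most nine $2$-colors.
\item After this normalization, the tight case you single out cannot occur: one of $u_1,u_4$ colored $1$ and all ten $2$-colors present with no conflict. So the obstacle you identify is not the real one.
\item Your swap paragraph is not an argument. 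It cites lemmas that do not match this configuration. It also asserts without support that $u_1$ has two available $2$-colors after uncoloring, and that exchanging the colors of $u_2$ and $u_6$ creates no $2$-conflict. Their second neighbourhoods differ, e.g.\ through $N(u_3)$.
\end{itemize}
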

\begin{proof}
Suppose not, i.e., there is such a structure in $G$. Let $u_1u_2u_3u_4u_5u_1$ be a $5$-face, $u_1$ be a $3$-vertex, and $u_2u_3u_6u_2, u_4u_5u_7u_4$ be two $3$-faces. Let $N(u_1)=\{u_2, u_5, v_1\}$. By Lemma~\ref{3cycle4vertices}, $u_2u_5\notin E(G)$, $v_1\notin\{u_3, u_4, u_6, u_7\}$, and $d(u_2)=d(u_3)=d(u_4)=d(u_5)=d(u_6)=d(u_7)=4$. We delete $u_1$ and add $u_2u_5$ to obtain a planar graph $G'$ with $\Delta(G')\le4$. By the minimality of $G$, $G'$ has a good coloring $f$. We extend $f$ to $G$.

\textbf{Case 1:} $f(v_1)=1$ or $f(v_1)\notin\{f(u_2), f(u_5)\}$. By Remark~\ref{saveonecolor}, each of $u_2, u_5, v_1$ must see color $1$ at least once in $N[v_1], N[u_2], N[u_5]$, since otherwise we recolor $u_2, u_5, v_1$ with $1$. Therefore, at most nine $2$-colors appear within distance two from $u_1$, and we color $u_1$ with an available $2$-color.

\textbf{Case 2:} $f(v_1)\in\{f(u_2), f(u_5)\}$ and $f(v_1)\ne1$. By symmetry, we may assume that $f(v_1)=f(u_5)=2_1$. Let $N(u_2)=\{u_1,u_3,u_6,v_2\}, N(u_3)=\{u_2,u_4,u_6,v_3\}, N(u_4)=\{u_3,u_5,u_7,v_4\}, N(u_5)=\{u_1,u_4,u_7,v_5\}, N(u_6)=\{u_2,u_3,v_6,w_6\}$, and $N(u_7)=\{u_4,u_5,v_7,w_7\}$. Note that $1\in\{f(u_4), f(u_7), f(v_5)\}$, since otherwise we recolor $u_5$ with $1$ and it is already solved in Case 1. We uncolor $u_5$. We must have $f(u_2), f(u_3) \neq 1$, since otherwise, by Remark~\ref{saveonecolor}, at most nine $2$-colors appear within distance two from $u_5$, and we color $u_5$ with an available $2$-color. This is already solved in Case 1. Therefore, $f(u_2), f(u_3) \neq 1$ and all ten $2$-colors appear exactly once among $v_1, u_2, u_3, u_4, v_4, v_7, w_7, v_5$, and $N(v_5)$. We color $u_1$ with $1$. There are three subcases.

\textbf{Case 2.1:} $f(u_4)=1$. We uncolor $u_2$. By Remark~\ref{saveonecolor}, $u_2,u_5$ have at least $2,1$ available $2$-colors, and we color them in order.

 
\textbf{Case 2.2:} $f(u_7)=1$. We know $f(v_4) \ne 1$, since otherwise we can color $u_5$ with an available $2$-color, which is solved in Case 1. Thus, we can uncolor $u_7$ and recolor $u_4$ with $1$. By Remark~\ref{saveonecolor}, $u_7$ has at least one available $2$-color. We color $u_7$ with an available $2$-color, and it is already solved in Case 2.1.


\textbf{Case 2.3:} $f(v_5)=1$ and $1\notin\{f(u_4), f(u_7)\}$. By Case 2.1 and 2.2, $f(v_4) = 1$ and $1\in \{f(v_7),f(w_7)\}$. Therefore, $u_5$ has at least one available $2$-color, and we are done.
\end{proof}

By Lemma~\ref{no33}, a $5$-face can have at most two $3$-vertices. If it has two $3$-vertices, then we show that it cannot be adjacent to any $3$-faces. See Figure~\ref{configurations}.

\begin{lemma}\label{no5cycled3d33cycle}
If a $5$-face has two $3$-vertices, then it cannot be adjacent to any $3$-faces.
\end{lemma}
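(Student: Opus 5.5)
We argue by contradiction, as in the preceding lemmas. Let $u_1u_2u_3u_4u_5u_1$ be a $5$-face with two $3$-vertices, and suppose it is adjacent to a $3$-face. By Lemma~\ref{no33}, the two $3$-vertices are nonadjacent, so up to symmetry they are $u_1$ and $u_3$. By Lemma~\ref{3cycle4vertices}, no $3$-vertex lies on a $3$-cycle. Hence the only face edge that can lie on a $3$-face is $u_4u_5$. So let $u_4u_5u_6u_4$ be a $3$-face, with $u_4,u_5,u_6$ all $4$-vertices. Write $N(u_1)=\{u_2,u_5,v_1\}$ and $N(u_3)=\{u_2,u_4,v_3\}$.

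First we establish distinctness. Lemma~\ref{3cycle4vertices} and the face condition give $v_1\notin\{u_3,u_4,u_6\}$, $v_3\notin\{u_1,u_5,u_6\}$, and $u_2u_5,u_2u_4\notin E(G)$. A $3$-face would also be needed to have $v_1=v_3$, which is excluded; more precisely, $v_1=v_3$ together with the face structure gives a separating cycle or a triangle containing a $3$-vertex, and we will check which. The reduction deletes $u_1$ and adds the edge $u_2u_5$. The new edge keeps $G'$ planar, since $u_2$ and $u_5$ lie on a common face, and keeps $\Delta(G')\le 4$, since $u_2$ and $u_5$ each lose the neighbour $u_1$. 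By minimality, $G'$ has a good coloring $f$, and the added edge ensures $f$ has no $2$-conflict at distance two through $u_1$ other than those involving $v_1$.

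The extension follows the template of Lemma~\ref{no5cycle3d3cycle3cycle}. If $f(v_1)=1$, or $f(v_1)\notin\{f(u_2),f(u_5)\}$, we apply Remark~\ref{saveonecolor} to $u_2,u_5,v_1$: each sees colour $1$ in its closed neighbourhood. So at most nine $2$-colours lie within distance two of $u_1$, and $u_1$ receives a $2$-colour (or the colour $1$ if no neighbour has it).

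Otherwise $f(v_1)$ is a $2$-colour equal to $f(u_2)$ or $f(u_5)$, and we split into two subcases.

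In the first subcase, $f(v_1)=f(u_2)$. We uncolor $u_2$ and exploit the $3$-vertex $u_3$. The vertex $u_2$ has only the two $3$-vertices $u_1,u_3$ and two further vertices as neighbours, so few vertices lie within distance two. After the saving argument, $u_2$ and $u_1$ have at least $2$ and $1$ available $2$-colours respectively, and we color them in order. If instead $1\in\{f(u_3)\}$ frees more colours, that only helps.

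In the second subcase, $f(v_1)=f(u_5)$. We uncolor $u_5$ and color $u_1$ with $1$ if possible. Otherwise we use the triangle $u_4u_5u_6$. Either $1$ appears on $u_4$ or $u_6$, and then we recolor $u_3$ or swap so that $u_4$ becomes $1$, as in Cases 2.1 and 2.2 of Lemma~\ref{no5cycle3d3cycle3cycle}. Or $u_5$ can itself be recolored with $1$, which returns us to the first case.

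The main obstacle is this last subcase. There we must count exactly the vertices within distance two of $u_5$ and $u_1$ simultaneously. We must also check that the $3$-vertex $u_3$ (degree $3$, nonadjacent to $u_5$) genuinely gives the one-colour slack that the triangle vertex $u_7$ or the extra face edge gave in Lemma~\ref{no5cycle3d3cycle3cycle}. We expect to finish with Hall's Theorem on $\{u_1,u_5\}$, possibly after uncoloring $u_4$ as well and coloring $u_4,u_5,u_1$ in an order determined by their list sizes.
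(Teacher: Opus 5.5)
Your reduction (delete the $3$-vertex $u_1$, add $u_2u_5$) and your Case 1 match the paper's up to relabelling, and your two subcases correspond to the paper's two conflict cases. Neither subcase is actually proved, and they carry all the work. In the subcase $f(v_1)=f(u_2)$, write $N(u_2)=\{u_1,u_3,a,b\}$ and $N(u_5)=\{u_1,u_4,u_6,y\}$. After you uncolour $u_2$, it cannot take its old colour, because $v_1$ keeps it and $d_G(u_2,v_1)=2$. The other twelve coloured vertices within distance two of $u_2$ are $u_3,a,b,u_4,u_5,v_3$ and the six further neighbours of $a,b$. Remark~\ref{saveonecolor} only guarantees three vertices of colour $1$ among them: one in $\{u_3,u_4,v_3\}$, one in $N[a]-u_2$, one in $N[b]-u_2$. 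So the nine remaining $2$-colours can all appear there, and $u_2$ cannot take colour $1$ if, say, $f(a)=1$. Thus $u_2$ may have no available colour at all. Your claim of at least $2$ and $1$ available $2$-colours at $u_2,u_1$ is false; the two $3$-neighbours of $u_2$ do not give nearly enough slack. The missing idea is to uncolour more vertices. The paper uncolours the hub, the second $3$-vertex and its triangle neighbour ($u_2,u_3,u_4$ in your labels). If neither $a$ nor $b$ has colour $1$, it colours $u_2$ with $1$, and $u_3,u_4,u_1$ have at least $2,2,3$ available $2$-colours. Otherwise, say $f(a)=1$. Since $u_5$ must see a $1$ in $\{u_5,u_6,y\}$, the pairwise-within-distance-two vertices $u_2,u_3,u_4,u_1$ have at least $1,3,2,4$ available $2$-colours, and Hall's Theorem finishes.

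The subcase $f(v_1)=f(u_5)$ is, by your own account, only a plan, and its case split is incomplete. You separate the case that $u_4$ or $u_6$ has colour $1$ from the case that $u_5$ can be recoloured with $1$. But the only colour-$1$ neighbour of $u_5$ may be $y$, and then neither applies. The paper proceeds as follows:
\begin{itemize}
\item Uncolour $u_5$ and show $f(u_2),f(u_3)\neq 1$; otherwise $u_5,u_1$ have at least $1,2$ available $2$-colours.
\item Colour $u_1$ with $1$ and split according to which of $u_4,u_6,y$ carries colour $1$.
\item If it is $u_4$ or $u_6$, uncolour the second $3$-vertex $u_3$ and colour $u_5,u_3,u_1$ greedily from lists of sizes at least $1,2,3$.
\item If it is only $y$, then $u_4$ and $u_6$ cannot be recoloured with $1$. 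This forces a $1$ at the fourth neighbour of $u_4$ and at an outer neighbour of $u_6$, which frees a $2$-colour for $u_5$.
\end{itemize}
Finally, your assertion that $v_1=v_3$ is excluded is unjustified. It only produces the $4$-cycle $u_1u_2u_3v_1$, which separates $a,b$ from the triangle, and no earlier lemma forbids separating $4$-cycles. Fortunately, none of the arguments needs $v_1\neq v_3$.
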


\begin{proof}
Suppose not, i.e., there is such a structure in our graph $G$. Let $u_1u_2u_3u_4u_5u_1$ be a $5$-face, $u_2, u_5$ be two $3$-vertices, and $u_3u_4u_6u_3$ be a $3$-face. Let $N(u_2)=\{u_1, u_3, v_2\}$ and $N(u_5)=\{u_1, u_4, v_5\}$. By Lemma~\ref{no33} and~\ref{3cycle4vertices}, $u_2u_5 \notin E(G)$ and $u_1,u_3,u_4, u_6$ are $4$-vertices. Let $N(u_1)=\{u_2,u_5,v_1,w_1\}, N(u_3)=\{u_2,u_4,u_6,v_3\}, \linebreak N(u_4)=\{u_3,u_5,u_6,v_4\}$, and $N(u_6)=\{u_3,u_4,v_6,w_6\}$. By Lemma~\ref{3cycle4vertices}, $u_1u_4\notin E(G)$. We delete $u_5$ and add $u_1u_4$ to obtain a planar graph $G'$ with $\Delta(G')\le4$. By the minimality of $G$, $G'$ has a good coloring $f$. 

\textbf{Case 1:} $f(v_5)=1$ or $f(v_5)\notin\{f(u_1),f(u_4)\}$. By Remark~\ref{saveonecolor}, $u_5$ has at least one available $2$-color, and we are done.

\textbf{Case 2:} $f(v_5)=f(u_4) \neq 1$. Then $1 \in \{f(u_3), f(u_6), f(v_4)\}$ since otherwise we recolor $u_4$ with $1$ and $f(v_5) \notin \{f(u_1), f(u_4)\}$. This is already solved in Case 1. We uncolor $u_4$. We claim $f(u_1), f(u_2) \neq 1$, since otherwise, by Remark~\ref{saveonecolor}, $u_4, u_5$ have at least $1,2$ available $2$-colors. We color them in order. Therefore, $f(u_1), f(u_2) \neq 1$, and all ten $2$-colors appear exactly once among $u_1, u_2, u_3, v_3, u_6, v_6, w_6, v_4, v_5$, and $N(v_4)$. We color $u_5$ with $1$. There are three subcases.

\textbf{Case 2.1:} $f(u_3)=1$. We uncolor $u_2$. By Remark~\ref{saveonecolor}, $u_2, u_4$ $,u_5$ have at least $2,1,3$ available $2$-colors, and we color them in the order $u_4,u_2,u_5$.

\textbf{Case 2.2:} $f(u_6)=1$. The proof is exactly the same as Case 2.1.


\textbf{Case 2.3:} $f(v_4)=1$, and $f(u_3), f(u_6)\neq 1$. By Case 2.1 and 2.2, $f(v_3) = 1$ and $1 \in \{f(v_6), f(w_6)\}$. Therefore, by Remark~\ref{saveonecolor}, $u_4,u_5$ have at least $2,1$ available $2$-colors, and we color them in the order $u_5,u_4$.


\textbf{Case 3:} $f(v_5)=f(u_1)\ne 1$. We uncolor $u_1,u_2,u_3$. We must have $1 \in \{f(v_1), f(w_1)\}$, since otherwise we color $u_1$ with $1$, and by Remark~\ref{saveonecolor}, $u_2,u_3,u_5$ have at least $2, 2, 3$ available $2$-colors. We are done by Hall's Theorem. Without loss of generality, say $f(v_1) = 1$. By Remark~\ref{saveonecolor}, we may assume $1 \in \{f(u_4), f(v_4), f(u_6)\}$, and $u_1, u_2,u_3,u_5$ have at least $1, 3, 2, 4$ available $2$-colors. We are done by Hall's Theorem.
\end{proof}

\textbf{Proof of Theorem~\ref{maintheorem1}:} By Euler's Formula for planar graphs, we have

\begin{equation}\label{discharging}
\sum\limits_{v \in V(G)} (d(v) - 4) +  \sum\limits_{f \in F(G)} (d(f) - 4) = -8.  
\end{equation}

Let the initial charge of each vertex and face be defined as $Ch(v) = d(v) - 4$ and $Ch(f) = d(f) - 4$. We define a few rules to redistribute the charges so that every vertex and face has a final charge of at least $0$.

(R1) Each $3$-vertex receives $\frac{1}{2}$ from each $5^+$-face it belongs to.

(R2) Each $3$-face receives $\frac{1}{2}$ from each adjacent $5^+$-face.

By Lemma~\ref{mindegree}, we only need to consider $3$-vertices and $4$-vertices. Since each $4$-vertex does not give or receive any charges, its charge remains $0$ until the end. By Lemma~\ref{3cycle4vertices}, a $3$-vertex cannot belong to any $3$-faces. By Lemma~\ref{3vertex4cycle}, a $3$-vertex can belong to at most one $4$-face. Therefore, it belongs to at least two $5^+$-faces and its final charge is at least $3-4+ 2 \cdot \frac{1}{2} = 0$. Every vertex has a final charge of at least $0$.

It remains to show every face has a final charge of at least $0$. A $4$-face has final charge $0$, since it does not give or receive any charges. By Lemma~\ref{two3cycles} and~\ref{no434}, a $3$-face cannot be adjacent to other $3$-faces and can be adjacent to at most one $4$-face. Therefore, it is adjacent to at least two $5^+$-faces and its final charge is at least $3-4+ 2 \cdot \frac{1}{2} = 0$. Let $f$ be a $5$-face. By Lemma~\ref{no33}, $f$ can have at most two $3$-vertices. If $f$ has two $3$-vertices, then by Lemma~\ref{3cycle4vertices} and~\ref{no5cycled3d33cycle}, it cannot be adjacent to any $3$-faces, and its final charge is at least $5 - 4 - 2 \cdot \frac{1}{2} = 0$. If $f$ has exactly one $3$-vertex, then by Lemma~\ref{3cycle4vertices},~\ref{no5cycled3bowtie}, and~\ref{no5cycle3d3cycle3cycle}, $f$ can be adjacent to at most one $3$-face, and its final charge is at least $5 - 4 - 2 \cdot \frac{1}{2} = 0$. Otherwise, $f$ has no $3$-vertices. By Lemma~\ref{superbowtie} and~\ref{no5cycle3cyclebowtie}, $f$ can be adjacent to at most two $3$-faces, and its final charge is at least $5 - 4 - 2 \cdot \frac{1}{2} = 0$. By Lemma~\ref{no33}, a $6$-face $f$ can have at most three $3$-vertices. If $f$ has three $3$-vertices, then, by Lemma~\ref{3cycle4vertices}, it cannot be adjacent to $3$-faces, and its final charge is at least $6-4-3 \cdot \frac{1}{2} \ge 0$. If $f$ has two $3$-vertices, then, by Lemma~\ref{3cycle4vertices}, it can be adjacent to at most two $3$-faces, and its final charge is at least $6-4-4 \cdot \frac{1}{2} \ge 0$. If $f$ has one $3$-vertex, then, by Lemma~\ref{3cycle4vertices} and~\ref{superbowtie}, it can be adjacent to at most three $3$-faces, and its final charge is at least $6-4-4 \cdot \frac{1}{2} \ge 0$. Otherwise, $f$ has no $3$-vertices. By Lemma~\ref{superbowtie}, it can be adjacent to at most four $3$-faces, and its final charge is at least $6-4-4 \cdot \frac{1}{2} \ge 0$.

For an $\ell$-face $f$, where $\ell \ge 7$, let $a$ be the number of $3$-vertices in $V(f)$ and $b$ be the number of $3$-faces adjacent to $f$. By Lemma~\ref{no33}, $2a \le \ell$. By Lemma~\ref{superbowtie}, $\frac{3}{2} b \le \ell$. By Lemma~\ref{3cycle4vertices}, $2a + b \le \ell$. Therefore, $\frac{a+b}{2} \le \frac{\ell + b}{4}$ and $\frac{b}{4} \le \frac{\ell}{6}$. We conclude $f$ has a final charge of at least

$$ \ell - 4 - \frac{1}{2} a - \frac{1}{2} b \ge \ell - 4 - \frac{\ell+b}{4} = \frac{3 \ell}{4} - 4 - \frac{b}{4} \ge \frac{7}{12} \ell - 4 \ge 0,$$
since $\ell \ge 7$. This is a contradiction with~\eqref{discharging}, and the proof is completed.

\section{Packing $(1^2,2^7)$-coloring of planar graphs of maximum degree at most $4$}
In this section, we show that every planar graph with maximum degree at most four has a packing $(1^2,2^7)$-coloring. We call a packing $(1^2,2^7)$-coloring {\em good coloring} in this section. We use $\{1_a,1_b\}$ to denote the $1$-colors, and let $\{2_1,2_2,2_3,2_4,2_5,2_6,2_7\}$ be the set of $2$-colors. Furthermore, in our reducibility proofs, whenever we delete a set of vertices $S$ from $G$ to obtain a smaller graph $G'$, we define the boundary vertices (denoted by $B$) as the set of all vertices in $G - S$ that are within distance two from $S$ in the original graph $G$.

\textbf{Proof of Theorem~\ref{maintheorem2}:} Suppose not, i.e., there exist planar graphs with maximum degree at most four that have no packing $(1^2,2^7)$-coloring. We take such a graph $G$ with minimum $|V(G)|+|E(G)|$.

\begin{lemma}\label{mindegree-2}
$\delta(G) \ge 3$.    
\end{lemma}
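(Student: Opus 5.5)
We will follow the same reduction pattern as Lemma~\ref{mindegree}, now with the color set $\{1_a,1_b\}\cup\{2_1,\dots,2_7\}$. The graph $G$ is a minimal counterexample. For any vertex $u$ with $d(u)\le 2$, we delete $u$ (possibly adding an edge between its two neighbours) to get a smaller planar graph $G'$ with $\Delta(G')\le 4$. By minimality $G'$ has a good coloring $f$, and we show that $f$ extends to $u$ by a counting argument. We also note that $G$ is connected and has no isolated vertices, since otherwise a component or the isolated vertex could be handled separately.

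\textbf{Case $d(u)=1$.} Let $u_1$ be the neighbour of $u$. At most $4$ vertices lie within distance two of $u$: namely $u_1$ and at most three further neighbours of $u_1$. These $4$ vertices block at most $4$ of the $9$ colors. Only $u_1$ can block a $1$-color through adjacency, so a $1$-color other than $f(u_1)$ is even free outright. The extension is immediate.

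\textbf{Case $d(u)=2$.} Let $N(u)=\{u_1,u_2\}$. We delete $u$ and add $u_1u_2$ if it is absent. Degrees do not increase, so $\Delta(G')\le 4$, and planarity is preserved because the new edge can be drawn along the path $u_1uu_2$.

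In $G'$, the vertices $u_1,u_2$ are adjacent. Hence $f(u_1)\ne f(u_2)$ when either is a $1$-color. Moreover, any two vertices of $N_G(u_1)\cup N_G(u_2)\setminus\{u\}$ are within distance two of each other in $G'$ (they pass through $u_1u_2$), so $f$ restricted to $G$ has no $2$-conflict.

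We first try a $1$-color for $u$. Its neighbours $u_1,u_2$ use at most two colors. If they do not use both $1_a$ and $1_b$, a $1$-color is free and we are done.

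Otherwise $\{f(u_1),f(u_2)\}=\{1_a,1_b\}$, so neither $u_1$ nor $u_2$ carries a $2$-color. The $2$-colors that can block $u$ then come only from the at most $3+3=6$ second neighbours of $u$. That leaves at least $7-6=1$ free $2$-color. We assign it to $u$; this creates no $2$-conflict, since every vertex within distance two of $u$ has been accounted for.

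\textbf{Main obstacle.} The only step that needs care is the last count. The distance-two neighbourhood of $u$ can have up to $8$ vertices, while there are only $7$ $2$-colors. The argument must therefore exploit that two of those vertices, $u_1$ and $u_2$, carry $1$-colors in exactly the bad situation. Once this is observed the count closes with no recoloring needed, and unlike Lemma~\ref{mindegree} we do not rely on having $11$ colors.
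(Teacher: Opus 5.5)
Your proof is correct and follows the paper's route. You delete $u$ (adding $u_1u_2$ when $d(u)=2$) and extend by counting. Your split into ``some $1$-color is free'' versus ``$\{f(u_1),f(u_2)\}=\{1_a,1_b\}$, so only the at most six second neighbours can block $2$-colors'' is an explicit form of the paper's remark that each of the at most $8$ vertices within distance two of $u$ blocks at most one of the $9$ colors.

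One supporting sentence is false, though not load-bearing. Two vertices of $N_G(u_1)\cup N_G(u_2)\setminus\{u\}$ need not be within distance two in $G'$: a neighbour of $u_1$ and a neighbour of $u_2$ may be at distance three. The correct reason $f$ has no $2$-conflict in $G$ is different. Any path of length at most two in $G$ between vertices other than $u$ either lies in $G-u\subseteq G'$ or is $u_1uu_2$. Since $u_1u_2\in E(G')$, this gives $f(u_1)\neq f(u_2)$ for every color, and the case that matters is $2$-colors, not the $1$-colors you mention.
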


\begin{proof}
We first show there is no $1$-vertex. Suppose not, i.e., $u$ is a $1$-vertex in $G$ with its unique neighbour $u_1$. We delete $u$ from $G$ to obtain a planar graph $G'$ with $\Delta(G')\le4$. By the minimality of $G$, $G'$ had a good coloring $f$. Since there are at most $4$ vertices within distance two from $u$ and we have in total $9$ colors, we can extend $f$ to $G$. Then we show $\delta(G)\ge3$. Suppose not, i.e., $u$ is a $2$-vertex with neighbours $u_1, u_2$.  We delete $u$ from $G$ and add the edge $u_1u_2$ (if it is not already in $G$) to obtain a planar graph $G'$ with $\Delta(G') \le 4$. By the minimality of $G$, $G'$ has a good coloring $f$. Since there are at most $8$ vertices within distance two from $u$ and we have in total $9$ colors, we can extend $f$ to $G$. Note that $f$ has no $2$-conflict. 
\end{proof}

\begin{rem}\label{saveonecolor-2}
We use the following color-saving argument very often. Let $G$ be a graph, $f$ be a partial good coloring, and $u,v$ be two vertices with $uv \in E(G)$ such that $u,v$ have been assigned a color by $f$.

(i) If $f(N(u)) \cap \{1_a, 1_b\} \neq \emptyset$, then there are at least two $1$-colors appear in $N[u]$, since we recolor $u$ with a $1$-color if $|f(N(u)) \cap \{1_a, 1_b\}| = 1$. Otherwise, both $1_a, 1_b$ are available at $u$. 

(ii) $u,v$ together see at least two $1$-colors. If $N(u) \cup N(v) - \{u,v\}$ does not have $1$-color, then we recolor $u,v$ with $1_a, 1_b$. Otherwise, say $N(u)-v$ has a $1$-color, then we are done by (i).



\end{rem}

\begin{lemma}\label{no33-2}
Two $3$-vertices cannot be adjacent.
\end{lemma}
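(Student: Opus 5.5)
The plan is to argue by reducibility, in the same spirit as Lemma~\ref{no33}. Suppose $u,v$ are adjacent $3$-vertices with $N(u)=\{v,u_1,u_2\}$ and $N(v)=\{u,v_1,v_2\}$. By Lemma~\ref{mindegree-2} all of $u_1,u_2,v_1,v_2$ have degree $3$ or $4$.

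\textbf{Reduction.} Identify $u$ and $v$ into a single vertex $u'$, whose neighbourhood is $\{u_1,u_2,v_1,v_2\}$. The resulting $G'$ is planar with $\Delta(G')\le 4$ and is smaller than $G$, so it has a good coloring $f$. In $G'$, every vertex within distance two of $u$ or $v$ in $G$ is within distance two of $u'$. Hence giving both $u$ and $v$ the color $f(u')$ creates no new $2$-conflict with other vertices. The only problem is that $u$ and $v$ are adjacent, so they cannot share a color.

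\textbf{Key steps.} We keep the color of $u'$ on one of $u,v$ and recolor the other.

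\textbf{Case 1: $f(u')$ is a $2$-color, say $2_1$.} Put $2_1$ on $u$ and try to color $v$.
\begin{itemize}
\item If some $1$-color is absent from $\{u_1,u_2\}$ and from $N_G(v)$, give it to $v$.
\item Otherwise $v$ must get a $2$-color. The set of vertices at distance at most two from $v$, other than $u$, is $\{u_1,u_2,v_1,v_2\}\cup (N(v_1)\cup N(v_2))\setminus\{v\}$. This has at most $10$ vertices, and $u$ already carries a $2$-color. So we need at least four of these positions to be non-$2$-colored or repeated.
\item Apply Remark~\ref{saveonecolor-2} to $v_1$ and $v_2$ in $G'$, both of which are adjacent to $u'$. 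Each of $N[v_1]$ and $N[v_2]$ must contain $1$-colors, and by (i) essentially two of them, all located among the $10$ positions. Together with the assumption that $u_1,u_2$ block the $1$-colors, this should force at least four non-$2$-colored vertices. If not, recolor some $v_i$ or $u_i$ with a $1$-color.
\end{itemize}

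\textbf{Case 2: $f(u')$ is a $1$-color, say $1_a$.} Put $1_a$ on $u$.
\begin{itemize}
\item If no neighbour of $v$ carries $1_b$, color $v$ with $1_b$.
\item Otherwise, say $f(v_1)=1_b$. Remark~\ref{saveonecolor-2}(i) applied to $v_2$, which sees $1_a$ at $u'$, shows that $N[v_2]$ contains $1_b$ as well, unless $v_2$ can be recolored. This saves further $2$-color positions around $v$.
\item If the count still falls short, swap the roles of $u$ and $v$ and use the symmetric savings around $u_1,u_2$. If necessary, uncolor one of $u_1,u_2,v_1,v_2$ and recolor in a suitable order, or finish with Hall's Theorem as in Section 2.
\end{itemize}

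\textbf{Main obstacle.} The hard part is the counting in Case 1 and the subcase $f(v_1)=1_b$ of Case 2. There are only seven $2$-colors against ten second-neighbourhood slots, so we need three or four savings rather than the one saving used in Section 2. If identification proves too weak, the fallback reduction is to delete both $u$ and $v$ and add the edges $u_1u_2$ and $v_1v_2$. We would then color $u$ and $v$ last, using $1$-colors whenever possible. When both need $2$-colors, we would show via Remark~\ref{saveonecolor-2}(ii) that the lists have sizes at least $1$ and $2$, and color them greedily in the right order.
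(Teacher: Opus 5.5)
Your reduction (identifying $u$ and $v$ into $u'$) is the one the paper uses. However, the proposal leaves the decisive count open, and in Case 2 the savings you list are not enough.

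Take $f(u')=1_a$ with $u$ colored $1_a$. No vertex of $u_1,u_2,v_1,v_2$ can be $1_a$. After your two escapes (give $1_b$ to $v$, or swap roles), you are left with $f(u_1)=f(v_1)=1_b$, say. Your savings around $v$ are then $u_1$, $v_1$, and one $1_b$ in $N[v_2]$. That leaves seven possibly $2$-colored vertices within distance two of $v$: $u_2$, $v_2$, the three vertices of $N(v_1)-v$, and two vertices of $N(v_2)-v$. These can carry all seven $2$-colors.

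Your remedies do not close this. Swapping $u$ and $v$ reproduces the same count around $u$, and ``uncolor and recolor'' or ``Hall's Theorem'' are not arguments as stated.

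The missing step is to apply Remark~\ref{saveonecolor-2}(i) to $v_2$ a second time, now in $G$ with $u,v$ uncolored. There $v_2$ no longer sees $1_a$ through $u'$. So either $N(v_2)-v$ contains both $1$-colors, or you recolor $v_2$ with $1_a$, which is legal because $v_2\not\sim u$. Either way two vertices of $N[v_2]-v$ are $1$-colored, and $v$ sees at most six $2$-colors. (Equivalently, when $f(v_2)$ is a $2$-color, $v_1$ must see $1_a$ in $N(v_1)-v$; otherwise recolor $v_1$ with $1_a$ and color $u,v$ with $1_a,1_b$.)

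You also need the subcase $f(v_2)=1_b$. If $u_2$, $N(v_1)-v$ and $N(v_2)-v$ carry all seven $2$-colors, then $v_1,v_2$ have no $1$-colored neighbours other than $v$. Recolor both with $1_a$ and color $u,v$ with $1_a,1_b$.

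Case 1 is right in substance but misstated in three places.
\begin{enumerate}
\item The first bullet should be a disjunction. A $1$-color missing from $\{f(v_1),f(v_2)\}$ goes on $v$; one missing from $\{f(u_1),f(u_2)\}$ goes on $u$, with $f(u')$ on the other. The residual case is then $\{f(u_1),f(u_2)\}=\{f(v_1),f(v_2)\}=\{1_a,1_b\}$.
\item Your threshold is off by one. The $2$-color on $u$ cannot reappear among the ten other positions, so those positions may carry at most five $2$-colors. You therefore need five savings, not four.
\item The residual case supplies six savings: the four vertices $u_1,u_2,v_1,v_2$, plus the opposite $1$-color in each $N(v_i)-v$. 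That last saving comes from recoloring $v_i$ in $G$ and putting the freed $1$-color on $v$, not from Remark~\ref{saveonecolor-2}(i) applied in $G'$.
\end{enumerate}

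Finally, your counts and recolorings assume $u_1,u_2,v_1,v_2$ are distinct; for instance, you use $v_i\not\sim u$. The paper handles the cases where $u$ and $v$ share neighbours separately, and you need to as well, though the counts there are easy.
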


\begin{proof}
Suppose not, i.e., $u,v$ are two $3$-vertices with $uv\in E(G)$. Let $N(u)=\{v,u_1,u_2\}$ and $N(v)=\{u,v_1,v_2\}$. If $|\{u_1, u_2, v_1, v_2\}| = 2$, say $u_1 = v_1$ and $u_2 = v_2$, then we delete the edge $uv$ to obtain a planar graph $G'$ with $\Delta(G') \le 4$. By the minimality of $G$, $G'$ has a good coloring $f$. We uncolor $u,v$. Since we have in total $9$ colors, there are at least $3,3$ available colors at $u,v$, and we color them in order. If $|\{u_1, u_2, v_1, v_2\}| = 3$, say $u_1 = v_1$ and $u_2 \neq v_2$, we identify $u$ and $v$ (call this new vertex $u'$) to obtain a planar graph $G'$ with $\Delta(G') \le 4$. By the minimality of $G$, $G'$ has a good coloring $f$. If $f(u') \in \{1_a, 1_b\}$, say $f(u') = 1_a$, then, since we must see $1_b$ in both $N[u_1]$ and $N[v_2]$, we color $u$ with $1_a$ and $v$ with an available $2$-color. Otherwise, $f(u') \notin \{1_a, 1_b\}$. We must have $\{f(u_1), f(v_2) \} = \{1_a, 1_b\}$, say $f(u_1) = 1_a$ and $f(v_2) = 1_b$, since otherwise we color $u$ with $f(u')$ and $v$ with an available $1$-color. Furthermore, $u_1$ must see $1_b$, since otherwise we recolor $u_1$ with $1_b$. Similarly, $u_2$ must see $1_a$. We color $u,v$ with $f(u')$ and an available $2$-color. Thus, $|\{u_1, u_2, v_1, v_2\}| = 4$. 

We identify $u$ and $v$ (call this new vertex $u'$) to obtain a planar graph $G'$ with $\Delta(G') \le 4$. By the minimality of $G$, $G'$ has a good coloring $f$. We extend $f$ to $G$.

\textbf{Case 1:} $f(u')$ is a $1$-color, say $1_a$. Then $1_b \in \{f(u_1), f(u_2)\} \cap \{f(v_1), f(v_2)\}$, since otherwise we color one of $u,v$ with $1_b$ and the other with $1_a$. We may assume $f(u_1)=f(v_1)=1_b$. If $f(v_2) = 1_b$, then we can color $u,v$ with $1_a$ and a $2$-color unless all seven $2$-colors appear on $u_2, N(v_1)$, and $N(v_2)$. However, we can recolor both $v_1,v_2$ with $1_a$, and color $u,v$ with $1_a, 1_b$. Therefore, $f(v_2) \neq 1_b$ and $v_2$ must see $1_b$. By Remark~\ref{saveonecolor-2} (i), $\{1_a, 1_b\} \subseteq f(N[v_2])$ and we can color $u,v$ with $1_a$ and an available $2$-color.


\textbf{Case 2:} $f(u')$ is a $2$-color. Then $\{f(v_1), f(v_2)\} = \{f(u_1), f(u_2)\} = \{1_a, 1_b\}$, since otherwise we color $u,v$ with $f(u')$ and an available $1$-color. By Remark~\ref{saveonecolor-2} (i), we can color $u,v$ with $f(u')$ and an available $1$-color unless $\{1_a, 1_b\} \subseteq f(N[v_1])$. We color $u,v$ with $f(u')$ and an available $2$-color. This is a contradiction.
\end{proof}

\begin{lemma}\label{3cycle4vertices-2}
Every vertex of a $3$-cycle must be a $4$-vertex.    
\end{lemma}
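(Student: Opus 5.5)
We argue by contradiction, mirroring Lemma~\ref{3cycle4vertices}. Suppose $u_1u_2u_3u_1$ is a $3$-cycle with $u_1$ of degree less than $4$. By Lemma~\ref{mindegree-2}, $u_1$ is a $3$-vertex; write $N(u_1)=\{u_2,u_3,v_1\}$. By Lemma~\ref{no33-2}, $u_2,u_3,v_1$ are $4$-vertices. We delete $u_1$ and add the edge $v_1u_2$ (if it is not already in $G$) to obtain $G'$. The new edge can be drawn along the path $v_1u_1u_2$, so $G'$ is planar. Also $\Delta(G')\le 4$: $u_2$ and $v_1$ each lose the edge to $u_1$ and gain at most one edge. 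By minimality, $G'$ has a good coloring $f$.

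Distances between vertices other than $u_1$ do not increase when passing from $G$ to $G'$, because any path through $u_1$ can be shortcut via $u_2u_3$ and $v_1u_2$. Hence $f$ has no $2$-conflict in $G-u_1$, and adjacent vertices of $G-u_1$ still receive different $1$-colors. It only remains to color $u_1$.

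\textbf{Counting the vertices near $u_1$.} The neighbours $u_2,u_3$ have at most two further neighbours each outside the triangle, and $v_1$ has at most three. So there are at most $3+2+2+3=10$ vertices within distance two of $u_1$, including the three neighbours $u_2,u_3,v_1$. Moreover $u_2,u_3,v_1$ are pairwise within distance two in $G'$: $u_2u_3$ and $v_1u_2$ are edges, and $v_1$ and $u_3$ are joined through $u_2$. So if two of them carry $2$-colors, those colors are distinct.

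\textbf{Extending $f$ to $u_1$.} If $f(\{u_2,u_3,v_1\})$ misses some $1$-color, we give that $1$-color to $u_1$ and are done. So assume $\{1_a,1_b\}\subseteq f(\{u_2,u_3,v_1\})$. Since $u_2u_3$ is an edge and $v_1u_2$ is an edge of $G'$, at most one of the three neighbours carries a $2$-color. That neighbour contributes one $2$-color, and the vertices at distance exactly two contribute at most $7$ more. This gives up to $8$ forbidden $2$-colors but only $7$ available, so naive counting fails.

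\textbf{Main obstacle: saving two $2$-colors.} We need to save about two colors, and we use Remark~\ref{saveonecolor-2}. First, the $1$-colored neighbours contribute no $2$-colors. Second, apply Remark~\ref{saveonecolor-2}(i) to a $1$-colored neighbour $w$: either $w$ sees the other $1$-color in $N(w)$, so a distance-two vertex is $1$-colored and contributes no $2$-color, or $w$ can be recolored to the other $1$-color. We split into cases according to which of $u_2,u_3,v_1$ carries the $2$-color, or whether none does (then two of them share a $1$-color, and in this subcase the only edge that can join them is the added $v_1u_2$).

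In each case we recolor so that a $1$-color becomes free at $u_1$, or we show that two distance-two vertices carry $1$-colors, leaving at most $6$ forbidden $2$-colors. The delicate subcase is when the $2$-colored neighbour also has both $1$-colors in its neighbourhood. There we apply Remark~\ref{saveonecolor-2}(ii) to the edge between the two $1$-colored neighbours, or we swap their $1$-colors, to free a $1$-color for $u_1$.
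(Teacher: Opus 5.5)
\textbf{There is a genuine gap.} Your reduction matches the paper: delete $u_1$, add $v_1u_2$, check planarity and $\Delta(G')\le 4$, and note that no new conflicts arise. The easy case, where $\{f(v_1),f(u_2),f(u_3)\}$ misses a $1$-color, also matches. After that, however, the proposal is only a plan. The mechanism you name fails in the hardest case, where all three neighbours are $1$-colored; necessarily $f(v_1)=f(u_3)=1_a$ and $f(u_2)=1_b$.

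In that case the only forbidden $2$-colors come from the seven distance-two vertices: $v_2,w_2$, the two outer neighbours of $u_3$, and the three outer neighbours of $v_1$. So one saving suffices, but nothing local provides it, for three reasons.
\begin{itemize}
\item These seven vertices may carry all seven $2$-colors, each of them seeing both $1$-colors further out.
\item Your inference ``$w$ sees the other $1$-color in $N(w)$, so a distance-two vertex is $1$-colored'' fails here. The vertex $u_2$ sees $1_a$ at $u_3$, and $u_3$ sees $1_b$ at $u_2$, and both of these are neighbours of $u_1$, not distance-two vertices.
\item No rearrangement of $1$-colors on $v_1,u_2,u_3$ frees a $1$-color: $u_2$ cannot take $1_a$ next to $u_3$, and $u_3$ cannot take $1_b$ next to $u_2$. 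Remark~\ref{saveonecolor-2}(ii) gives no information about two adjacent vertices that are already $1$-colored.
\end{itemize}

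\textbf{The missing idea} is to move a $1$-colored neighbour to a $2$-color. The paper uncolors $u_2$, gives $u_1$ the color $1_b$, and colors $u_2$ with a $2$-color. This needs its own count at $u_2$.
\begin{itemize}
\item $v_2$ and $w_2$ are $2$-colored.
\item Each sees $1_a$ outside $u_2$; otherwise recolor it with $1_a$, which frees a $2$-color at $u_1$.
\item After the shift, each sees $1_b$ outside $u_2$; otherwise recolor it with $1_b$.
\item Since $u_1,u_3,v_1$ are now $1$-colored, at most $2+2+1+1=6$ $2$-colors lie within distance two of $u_2$: from $v_2,w_2$, from the two outer neighbours of $u_3$, and at most one outer neighbour each of $v_2$ and $w_2$.
\end{itemize}

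The all-three case is also needed to finish the case $f(u_2)=1_a$, $f(u_3)=1_b$ with $v_1$ $2$-colored. There $v_1$ must see $1_b$ outside $u_2$, since otherwise recoloring $v_1$ with $1_b$ lands in the all-three case. Then $v_1$ must also see $1_a$ outside $u_2$: working in $G$, where $v_1u_2$ is absent, $v_1$ could otherwise be recolored with $1_a$, freeing its $2$-color.

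When $u_3$ or $u_2$ is the $2$-colored neighbour, the two savings come from the same kind of recoloring of the $1$-colored neighbours to the other $1$-color in $G$, not from Remark~\ref{saveonecolor-2}(ii) or swaps. In particular, when $u_2$ is $2$-colored, $v_1$ and $u_3$ need not be adjacent, so there is no edge to apply (ii) to. So the ``delicate subcase'' you single out is not where the difficulty lies.
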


\begin{proof}
Suppose $u_1u_2u_3u_1$ is a $3$-cycle and $u_1$ is a $3$-vertex. By Lemma~\ref{no33-2}, $u_2,u_3$ are $4$-vertices. Let $N(u_1) = \{v_1, u_2, u_3\}$ and $N(u_2)=\{u_1, u_3, v_2, w_2\}$. We delete $u_1$ and add $v_1u_2$ (if it is not already in $G$) to obtain a planar graph $G'$ with $\Delta(G') \le 4$. By the minimality of $G$, $G'$ has a good coloring $f$. 

\textbf{Case 1:} At most one of $f(v_1), f(u_2)$ and $f(u_3)$ is $1$-color. We can color $u_1$ with an available $1$-color.

\textbf{Case 2:} All of $f(v_1),f(u_2)$ and $f(u_3)$ are $1$-colors, say $f(v_1)=f(u_3)=1_a$ and $f(u_2)=1_b$. All seven $2$-colors must appear exactly once at $v_2, w_2$, $N(u_3) - u_2$, and $N(v_1)$, since otherwise we have an available $2$-color to use at $u_1$. We must have $1_a \in f(N(v_2))$ since otherwise we can recolor $v_2$ with $1_a$. Similarly, $1_a \in f(N(w_2))$. We uncolor $u_2$ and color $u_1$ with $1_b$. By Remark~\ref{saveonecolor-2} (i), $v_2,w_2$ must both see $1_b$. We color $u_2$ with an available $2$-color.


\textbf{Case 3:} Two of $f(v_1),f(u_2)$ and $f(u_3)$ are $1$-colors. Note that $\{1_a, 1_b\} \subseteq \{f(v_1), f(u_2), f(u_3)\}$, since otherwise we color $u_1$ with an available $1$-color. There are three subcases.

\textbf{Case 3.1:} $f(v_1), f(u_2) \in \{1_a, 1_b\}$. Say $f(v_1) = 1_a$ and $f(u_2) = 1_b$. Then $v_1$ must see $1_b$ in $N(v_1)$ and $u_2$ must see $1_a$ in $N(u_2)$, since otherwise we recolor $v_1$ to $1_b$ or $u_2$ to $1_a$, and color $u_1$ with an available $1$-color. Then $u_1$ has at least one available $2$-color and we are done.


\textbf{Case 3.2:} $f(u_2), f(u_3) \in \{1_a, 1_b\}$. Say $f(u_2) = 1_a$ and $f(u_3) = 1_b$. By Case 2, $v_1$ must see $1_b$. By Remark~\ref{saveonecolor-2} (i), $\{1_a, 1_b\} \subseteq f(N[v_1])$. Therefore, $u_1$ has at least one available $2$-color and we are done.

\textbf{Case 3.3:} $f(v_1), f(u_3) \in \{1_a, 1_b\}$. The proof is exactly the same as that of Case 3.1.
\end{proof}

\begin{lemma}\label{3vertex4cycle-2}
A $3$-vertex can belong to at most one $4$-face.
\end{lemma}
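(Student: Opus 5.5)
I will follow the reduction used for Lemma~\ref{3vertex4cycle}, adapting the colour-extension step to the palette $\{1_a,1_b\}\cup\{2_1,\dots,2_7\}$.

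\textbf{Setup and reduction.} Suppose $u_1u_2u_3u_4u_1$ and $u_3u_4u_5u_6u_3$ are two $4$-faces sharing the edge $u_3u_4$, with $u_4$ a $3$-vertex, so $N(u_4)=\{u_1,u_3,u_5\}$.
\begin{itemize}
\item As before, $u_5\neq u_1$ and $u_2\neq u_6$; otherwise one of the two $4$-cycles is separating and hence not a face.
\item By Lemma~\ref{3cycle4vertices-2}, $u_4$ lies in no $3$-cycle. Hence $u_1u_5\notin E(G)$, $u_1u_3\notin E(G)$ and $u_3u_5\notin E(G)$, which also forces $u_2\neq u_5$ and $u_1\neq u_6$.
\item By Lemma~\ref{no33-2}, the vertices $u_1,u_3,u_5$ are $4$-vertices.
\end{itemize}
Delete $u_4$ and add the edge $u_1u_5$, drawing it through the two faces. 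The resulting graph $G'$ is planar with $\Delta(G')\le 4$, because $u_1$ and $u_5$ each lose one edge and gain one. By minimality, $G'$ has a good coloring $f$, and $f$ gives distinct colours to $u_1,u_5$ (they are adjacent in $G'$). Every other pair of vertices at distance at most two in $G$ is still at distance at most two in $G'$: the paths through $u_4$ connected only pairs among $u_1,u_3,u_5$, and $u_1u_3$, $u_3u_5$ share $u_2$ and $u_6$ respectively. So $f$ restricted to $G-u_4$ is a good partial coloring of $G$.

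\textbf{Extending to $u_4$.} The vertex $u_4$ is in conflict with its three neighbours (for $1$-colors) and with the at most $3+3\cdot 3=12$ vertices within distance two (for $2$-colors).
\begin{itemize}
\item If some $1$-color is missing from $\{f(u_1),f(u_3),f(u_5)\}$, color $u_4$ with it and stop.
\item Otherwise both $1_a$ and $1_b$ occur on $N(u_4)$. Since $u_1$ and $u_5$ are adjacent in $G'$, the pair $u_1,u_5$ cannot both carry the same $1$-color. Up to symmetry this leaves two cases: $\{f(u_1),f(u_5)\}=\{1_a,1_b\}$, or $f(u_3)$ is a $1$-color and exactly one of $u_1,u_5$ carries the other.
\end{itemize}
In both cases, at least two of the three neighbours of $u_4$ are $1$-colored. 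This removes at least two vertices from the $2$-color count, leaving at most $10$ vertices at distance at most two that might carry $2$-colors.

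\textbf{Main obstacle: saving enough $2$-colors.} I must reduce this count to at most $6$. The tool is Remark~\ref{saveonecolor-2}(i). Take a $1$-colored neighbour $x\in\{u_1,u_3,u_5\}$ that sees the other $1$-color only through another neighbour of $u_4$ (possible only via the edge $u_1u_5$ or through $u_2$/$u_6$). Then either $x$ can be recoloured to that $1$-color, which frees a $1$-color at $u_4$ and we are done, or $x$ already sees both $1$-colors in $N[x]$ in $G-u_4$.

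Similarly, apply the remark to each $2$-colored neighbour $y$ of $u_4$ that sees no $1$-color in $G-u_4$. If we recolour $y$ with a $1$-color, both $1$-colors are again on $N(u_4)$ but one more neighbour is $1$-colored, so we return to a counted case.

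The delicate bookkeeping has two parts:
\begin{itemize}
\item Count $1$-colored vertices in the second neighbourhood. The aim is to show that each of the (at most) three neighbours contributes at least one $1$-colored vertex among its other neighbours, up to overlaps at $u_2,u_6$. This drops the $2$-color count to at most $10-3-1\le 6$.
\item Handle the recolouring of $u_1$ or $u_5$, which were made adjacent in $G'$ artificially. When they are switched, the new $1$-color at $u_1$ must not clash with $u_5$, so I must check against $f(u_5)$ explicitly.
\end{itemize}
If the direct count fails in the case $f(u_1)=1_a$, $f(u_5)=1_b$, I will fall back on uncolouring $u_3$ and applying a Hall's-theorem argument to $u_3,u_4$ jointly, as in the earlier lemmas.
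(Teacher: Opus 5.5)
Your reduction is the same as the paper's: delete the $3$-vertex and join its two neighbours that are not on the shared edge. Your check that every distance-two pair of $G-u_4$ survives in $G'$ is also correct. The gap is the extension to $u_4$, which is the real content of the lemma. You state an aim (each neighbour contributes a $1$-coloured vertex) and a fallback (Hall's theorem on $u_3,u_4$), but you never show that a colour is free at $u_4$. The sketch also contains two errors.

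First, the count is wrong. $u_2$ is a common neighbour of $u_1,u_3$, and $u_6$ is a common neighbour of $u_3,u_5$. So apart from $u_1,u_3,u_5$, the vertices within distance two of $u_4$ are $u_2,u_6$, two further neighbours of $u_1$, one of $u_3$ and two of $u_5$. That is at most $10$ vertices, not $12$. With two $1$-coloured neighbours, at most $8$ of them carry $2$-colours, so you need two more $1$-coloured vertices, not four. Your figure $10-3-1$ is unexplained.

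Second, the recolouring dichotomy is false as stated. Recolouring a neighbour $x$ from $c$ to the other $1$-colour frees $c$ at $u_4$ only if $c$ appears on no other neighbour of $u_4$. For instance, if $f(u_1)=f(u_3)=1_a$ and $f(u_5)=1_b$, recolouring $u_1$ or $u_3$ to $1_b$ frees nothing. So nothing forces them to see $1_b$, and ``every neighbour contributes a $1$-coloured vertex'' is unjustified.

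The repair is short. Work in $G-u_4$, where $u_1u_5\notin E(G)$, and apply the dichotomy only to a neighbour $x$ whose $1$-colour $c$ is unique on $N(u_4)$. Either $x$ has no neighbour in $G-u_4$ coloured by the other $1$-colour $c'$, in which case you recolour $x$ with $c'$ and give $u_4$ the colour $c$. Or it has such a neighbour $w_x$, and $w_x\notin N[u_4]$ because $u_4$ lies in no triangle. The cases are as follows.
\begin{itemize}
\item If $f(u_1)=1_a$, $f(u_5)=1_b$ and $f(u_3)$ is a $2$-colour, then $w_{u_1}$ (coloured $1_b$) and $w_{u_5}$ (coloured $1_a$) are distinct.
\item If all three neighbours are $1$-coloured, the one of $u_1,u_5$ whose colour differs from $f(u_3)$ gives one witness.
\item If $f(u_3)=1_a$, $f(u_1)=1_b$ and $f(u_5)$ is a $2$-colour (or symmetrically), then $w_{u_1}$ (coloured $1_a$) and $w_{u_3}$ (coloured $1_b$) are distinct.
\end{itemize}
In each case at least four of the at most ten vertices are $1$-coloured. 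So at most six $2$-colours are blocked, and $u_4$ gets a $2$-colour.

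The paper's case analysis is close to this but handles two steps differently. When the two non-shared neighbours carry $1_a,1_b$ and the shared one is $2$-coloured, it shows the shared neighbour sees both $1$-colours. When all three neighbours are $1$-coloured, it notes that the seven second neighbours must then carry all seven $2$-colours. The three neighbours, which are pairwise non-adjacent in $G$, can therefore be recoloured with a common $1$-colour, freeing the other.
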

\begin{proof}
Suppose $u_1u_2u_3u_4u_1$ and $u_1u_4u_5u_6u_1$ are two $4$-faces sharing the edge $u_1u_4$ such that $u_1$ is a $3$-vertex. We claim that $u_2 \neq u_6$ and $u_3 \neq u_5$, since otherwise one of $u_1u_2u_3u_4u_1$ and $u_1u_4u_5u_6u_1$ is a separating cycle, and thus is not a $4$-face. By Lemma~\ref{3cycle4vertices-2}, $u_2u_6 \notin E(G)$, $u_2 \neq u_5$, and $u_3 \neq u_6$. We delete $u_1$ from $G$ and add the edge $u_2u_6$ to obtain a planar graph $G'$ with $\Delta(G') \le 4$. By the minimality of $G$, $G'$ has a good coloring of $f$. $1_a,1_b\in\{f(u_2), f(u_4), f(u_6)\}$, since otherwise we can color $u_1$ with an available $1$-color.

\textbf{Case 1:} $1_a,1_b\in\{f(u_2), f(u_6)\}$. Say, $f(u_2)=1_a$ and $f(u_6)=1_b$. If $f(u_4) \in \{1_a, 1_b\}$, then all seven $2$-colors must appear at $N(u_4) - u_1$, $N(u_2) - \{u_1,u_3\}$, and $N(u_6) - \{u_1, u_5\}$. However, we recolor $u_2,u_4,u_6$ to $1_a$, and color $u_1$ with $1_b$. Otherwise, $f(u_4) \notin \{1_a, 1_b\}$ and $u_4$ must see $1_a, 1_b$. Therefore, $u_1$ has at least one available $2$-color, and we are done.

\textbf{Case 2:} $1_a,1_b\in\{f(u_2),f(u_4)\}$. Say, $f(u_2)=1_a$ and $f(u_4)=1_b$. By Case 1 and $u_2u_6 \in E(G')$, $f(u_6) \notin \{1_a, 1_b\}$ and $u_6$ must see $1_b$. Furthermore, $u_4$ must see $1_a$, since otherwise we recolor $u_4$ with $1_a$ and color $u_1$ with $1_b$. Similarly, $u_2$ must see $1_b$. Thus, $u_1$ has at least one available $2$-color, and we are done.
\end{proof}

\begin{lemma}\label{no333-2} 
A $3$-cycle can share edges with $3$-cycle via at most one edge.
\end{lemma}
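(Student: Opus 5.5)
We read the statement as forbidding a $3$-cycle $u_1u_2u_3u_1$ that shares two of its edges with two further $3$-cycles. This is the $(1^2,2^7)$-analogue of Lemma~\ref{two3cycles}; here the local structure is weaker because there are two $1$-colors. Up to symmetry we may assume the other two $3$-cycles are $u_1u_2u_4u_1$ and $u_2u_3u_5u_2$, meeting at $u_2$. By Lemma~\ref{3cycle4vertices-2} all of $u_1,\dots,u_5$ are $4$-vertices, and $N(u_2)=\{u_1,u_3,u_4,u_5\}$.

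\textbf{Degenerate case $u_4=u_5$.} Then $\{u_1,u_2,u_3,u_4\}$ induces $K_4$, and every vertex of it has exactly one outside neighbour. We would delete the four vertices and color them last. The deleted set has only $4$ outside neighbours, each of which sees colors only from its own neighbourhood, so the number of forbidden colors at each vertex is small. Hall's Theorem, together with Remark~\ref{saveonecolor-2} applied to the outside neighbours, should finish this case.

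\textbf{Main case $u_4\neq u_5$.} We delete $u_2$ and add the edge $u_4u_5$ inside the face created by the deletion, so $G'$ is planar. In $G'$ each of $u_4,u_5$ regains degree $4$, and $u_1,u_3$ drop to degree $3$, so $\Delta(G')\le 4$. Every two vertices of $N(u_2)$ are within distance two in $G'$: $u_1u_3,u_1u_4,u_3u_5,u_4u_5$ are edges, and $u_4u_1u_3$ and $u_1u_3u_5$ are paths. Hence the good coloring $f$ of $G'$ given by minimality uses distinct $2$-colors on the $2$-colored vertices of $N(u_2)$, and placing $u_2$ back creates no new $2$-conflict among them.

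\textbf{Extending $f$ to $u_2$.} If some $1$-color is missing from $f(N(u_2))$, we use it at $u_2$. So assume $\{1_a,1_b\}\subseteq f(N(u_2))$. Since $N(u_2)$ contains the path $u_4u_1u_3u_5$, the vertices carrying $1$-colors form an independent set on this path. We then split into cases according to which of the pairs $\{u_4,u_3\}$, $\{u_1,u_5\}$, $\{u_4,u_5\}$ carry $1_a,1_b$; the pair $\{u_4,u_5\}$ is already excluded by the added edge in $G'$, and we may use the symmetry $u_1\leftrightarrow u_3$, $u_4\leftrightarrow u_5$. In each case we apply Remark~\ref{saveonecolor-2}(i) to the $1$-colored neighbours and to the remaining $2$-colored neighbours, and Remark~\ref{saveonecolor-2}(ii) to adjacent pairs such as $u_1u_4$ and $u_3u_5$. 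This forces $1$-colors among the at most $6$ second neighbours of $u_2$, namely the fourth neighbours of $u_1,u_3$ and the two outside neighbours of each of $u_4,u_5$. With $2$ of the $4$ neighbours carrying $1$-colors and at least one second neighbour carrying a $1$-color, at most $2+5=7$ vertices within distance two of $u_2$ carry $2$-colors. That count alone does not free a $2$-color, so we need a second saved color.

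\textbf{Main obstacle.} Obtaining that second saved color is the crux. The tools are the two alternatives of Remark~\ref{saveonecolor-2}(i), recoloring a $1$-colored neighbour to the other $1$-color so that $u_2$ receives a $1$-color, and uncoloring one neighbour, say $u_1$, and recoloring $u_1,u_2$ in order. The last resort is to count available colors after uncoloring and finish with Hall's Theorem, as in Lemma~\ref{two3cycles}. The key point to verify is that if all seven $2$-colors appear exactly once around $u_2$, the forced $1$-colors on second neighbours allow swapping $u_2$ with a $1$-colored neighbour.
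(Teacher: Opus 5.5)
There is a genuine gap. Your reduction in the main case is the same as the paper's: delete $u_2$, whose neighbourhood contains the path $u_4u_1u_3u_5$, and join the ends $u_4,u_5$. The problem is in the extension step.

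\textbf{The case split is wrong.} You assert that the $1$-colored vertices of $N(u_2)$ form an independent set, and this is false. The colors $1_a$ and $1_b$ are two different classes, and each only has to be independent on its own, so adjacent vertices may receive $1_a$ and $1_b$. For the same reason, the added edge $u_4u_5$ only forbids $f(u_4)=f(u_5)$; it does not exclude $f(u_4)=1_a$, $f(u_5)=1_b$. Your split therefore keeps only the non-adjacent pairs $\{u_4,u_3\}$ and $\{u_1,u_5\}$, which is a single case up to symmetry. It drops the pairs $\{u_4,u_5\}$, $\{u_4,u_1\}$ (equivalently $\{u_3,u_5\}$) and $\{u_1,u_3\}$. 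These are three of the four cases the paper has to handle, namely its Cases 2.1--2.3.

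\textbf{No case is actually finished.} Even in the case you keep, you never obtain the second saved color; you call it the crux and leave a speculative swapping argument to be verified. The paper's mechanism is more direct. In every case, recolorings in the spirit of Remark~\ref{saveonecolor-2}(i) force two distinct second neighbours of the centre to carry $1$-colors. Then at most $10-4=6$ vertices within distance two of the centre are $2$-colored, so a $2$-color is free.

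Here is how this works in your labeling for $f(u_4)=1_a$, $f(u_3)=1_b$ with $u_1,u_5$ $2$-colored.
\begin{itemize}
\item The fourth neighbour of $u_3$ must be $1_a$. Otherwise recolor $u_3$ with $1_a$ and give $u_2$ the color $1_b$.
\item $u_5$ must have a $1_a$-neighbour other than $u_2,u_3$. Otherwise recolor $u_5$ with $1_a$, which is legal because $u_4u_5$ is an edge only of $G'$. After that, $u_2$ sees at most six $2$-colors. (If $u_4u_5\in E(G)$ already, $u_2$ has only eight vertices within distance two and the count is immediate.)
\end{itemize}
In an adjacent case such as $f(u_4)=1_a$, $f(u_5)=1_b$ with $u_1,u_3$ $2$-colored, the same reasoning shows that the fourth neighbours of $u_1$ and $u_3$ carry $1_b$ and $1_a$ respectively. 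Otherwise, recoloring $u_1$ with $1_b$ and/or $u_3$ with $1_a$ finishes.

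Your $K_4$ case deletes the whole $K_4$, whereas the paper instead identifies two of its vertices. You only sketch it, but it does go through. The deletion creates no new pairs at distance two, and Remark~\ref{saveonecolor-2}(i) applied to the outside neighbours leaves enough $2$-colors for Hall's Theorem.
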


\begin{proof} 
Let $u_1u_2u_3u_1$ and $u_1u_3u_4u_1$ be two $3$-cycles such that they share the edge $u_1u_3$. By Lemma~\ref{3cycle4vertices-2} (i), $u_1,u_2,u_3,u_4$ are all $4$-vertices. Let $N(u_1)=\{u_2,u_3,u_4,u_5\}$. By symmetry, we need to show that $u_2u_4, u_4u_5\notin E(G)$. Suppose not, i.e., $u_4$ is adjacent to $u_2$ or $u_5$.
    
\textbf{Case 1:} $u_2u_4\in E(G)$. Let $N(u_3)=\{u_1,u_2,u_4,v_3\}$. We identify $u_1$ and $u_3$ (call this new vertex $u'$) to obtain a planar graph $G'$ with $\Delta(G)\le4$. By the minimality of $G$, $G'$ has a good coloring $f$. If $v_3 = u_5$, then, by Remark~\ref{saveonecolor-2} (i), each of $u_4,u_5$ sees a $1$-color. Therefore, $u_1, u_3$ have at least $2,2$ available $2$-colors, and we are done. Thus, we may assume $v_3 \neq u_5$. If $f(u_5)$ is a $1$-color, say $1_b$, then, by Remark~\ref{saveonecolor-2} (ii), $u_2,u_4$ together see at least two $1$-colors. By Remark~\ref{saveonecolor-2} (i), $v_3$ sees a $1$-color. Therefore, $u_1, u_3$ have at least $1,2$ available $2$-colors. We color them in order. Therefore, we may assume $f(u_3), f(u_5) \notin \{1_a,1_b\}$, and $u_3,u_5$ both see $1_a, 1_b$. By Remark~\ref{saveonecolor-2} (ii), $u_2,u_4$ together see at least two $1$-colors. Therefore, $u_1, u_3$ have at least $2,2$ available $2$-colors. We color them in order.




\textbf{Case 2:} $u_4u_5\in E(G)$. We delete $u_1$ and add $u_2u_5$ (if it is not already in $G$) to obtain a planar graph $G'$ with $\Delta(G')\le4$. By the minimality of $G$, $G'$ has a good coloring $f$. Note that $1_a,1_b\in\{f(u_2),f(u_3),f(u_4),f(u_5)\}$, since otherwise we can color $u_1$ with an available $1$-color.
    
\textbf{Case 2.1:} $1_a,1_b\in\{f(u_2),f(u_5)\}$, say $f(u_2)=1_a, f(u_5)=1_b$. Then $u_3$ sees $1_b$ since otherwise we recolor $u_3$ with $1_b$. Similarly, $u_4$ sees $1_a$. Therefore, we can color $u_1$ with an available $2$-color.
  
\textbf{Case 2.2:} $1_a,1_b\in\{f(u_2),f(u_3)\}$, say $f(u_2)=1_a, f(u_3)=1_b$. Note that $f(u_4) \neq 1_b$ and $u_5$ sees $1_b$ since otherwise we recolor $u_5$ with $1_b$. Similarly, $u_4$ sees $1_a$. Therefore, we can color $u_1$ with an available $2$-color.

\textbf{Case 2.3:} $1_a,1_b\in\{f(u_3),f(u_4)\}$, say $f(u_3)=1_a, f(u_4)=1_b$. Then $u_2$ sees $1_b$, since otherwise we recolor $u_2$ with $1_b$. Similarly, $u_5$ sees $1_a$. Therefore, we can color $u_1$ with an available $2$-color.

\textbf{Case 2.4:} $1_a,1_b\in\{f(u_2),f(u_4)\}$, say $f(u_2)=1_a, f(u_4)=1_b$. Since $f(u_3), f(u_5) \notin \{1_a, 1_b\}$, $u_4$ must see $1_a$, since otherwise we recolor $u_4$ with $1_a$ and color $u_1$ with $1_b$. We also know $u_5$ must see $1_a$, since otherwise we recolor $u_5$ with $1_a$. Therefore, we can color $u_1$ with an available $2$-color.
\end{proof}

\begin{lemma}\label{no334-2}
If a $3$-face is sharing an edge with another $3$-face, then it cannot share an edge with a $4$-face.
\end{lemma}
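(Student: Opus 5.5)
Set up the configuration as follows. Let $u_1u_2u_3u_1$ and $u_1u_3u_4u_1$ be the two $3$-faces sharing the edge $u_1u_3$. Suppose, for a contradiction, that one of them, say $u_1u_2u_3u_1$, shares an edge with a $4$-face. By symmetry this edge is $u_1u_2$, with $4$-face $u_1u_2u_6u_5u_1$.

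By Lemma~\ref{3cycle4vertices-2}, $u_1,u_2,u_3,u_4$ are $4$-vertices. Write $N(u_1)=\{u_2,u_3,u_4,u_5\}$, $N(u_2)=\{u_1,u_3,u_6,w_2\}$ and $N(u_3)=\{u_1,u_2,u_4,v_3\}$. Lemma~\ref{no333-2} gives $u_2u_4\notin E(G)$ and $u_4u_5\notin E(G)$. Lemma~\ref{3cycle4vertices-2} (no $3$-cycle through an edge that would create a forbidden configuration) and the face structure give $u_5\ne u_6$, $u_5u_2\notin E(G)$ and $u_6\ne u_3$. Lemma~\ref{no333-2} applied to the pair of triangles also gives $u_4\ne u_6$, $u_5\ne v_3$ and similar distinctness facts.

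The reduction I would try first mirrors Case~1 of Lemma~\ref{no333-2}.
\begin{itemize}
\item Delete $u_1$, and add the edge $u_2u_5$ (or $u_4u_5$, whichever keeps degrees at most $4$ and planarity via the $4$-face). Since $u_1$ had degree $4$, each of $u_2,u_5$ loses one edge and gains one, so $\Delta(G')\le 4$. The edge can be drawn inside the $4$-face, so $G'$ is planar.
\item This reduction also preserves every distance-two relation among the vertices of $N(u_1)$ except the pair $u_2$–$u_5$ (handled by the new edge), $u_4$–$u_5$ and $u_2$–$u_4$. The last two are still at distance at most two via $u_3$ or $u_6$? Not for $u_4u_5$.
\item So instead I would delete $u_1$ and add both $u_2u_5$ and $u_4u_5$. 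Then $u_5$ gains two edges but loses only one, so its degree may exceed $4$.
\item Therefore the better choice is to identify $u_1$ with $u_6$, or to delete $u_1$ and add $u_4u_5$ only. Then $u_2$ and $u_5$ remain at distance two through $u_6$ in $G'$, since $u_2u_6u_5$ is a path. Likewise $u_2$ and $u_4$ are at distance two through $u_3$, and $u_3$ and $u_5$ through $u_4$.
\item This last reduction is the one I would commit to. Every pair in $N(u_1)$ is then within distance two in $G'$, so $G'$ has no new conflicts after $u_1$ is restored.
\end{itemize}

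Having taken a good coloring $f$ of $G'$, I would extend it to $u_1$ by case analysis on which of $u_2,u_3,u_4,u_5$ carry $1$-colors.

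\textbf{Case 1: $N(u_1)$ misses some $1$-color.} Then $u_1$ takes that $1$-color directly.

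\textbf{Case 2: $\{1_a,1_b\}\subseteq f(N(u_1))$.} The triangle $u_2u_3u_4$ path structure and the new edge $u_4u_5$ force the two $1$-colors onto a non-adjacent pair: $\{u_2,u_4\}$, $\{u_2,u_5\}$ or $\{u_3,u_5\}$. This matches Cases 2.1–2.4 of Lemma~\ref{no333-2}.
\begin{itemize}
\item Apply Remark~\ref{saveonecolor-2} (i) and (ii) to the remaining neighbours and to $u_6,w_2,v_3$. Each such vertex must see both $1$-colors, or else it can be recolored with a $1$-color to free a $1$-color at $u_1$ or to reduce the count of $2$-colors.
\item The $2$-colored vertices within distance two of $u_1$ number at most $14$. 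At least seven of those slots should be forced to be $1$-colored or repeated, leaving a $2$-color free for $u_1$.
\end{itemize}

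I expect the main obstacle to be the subcase $f(u_2)=1_a$, $f(u_5)=1_b$. Here the $4$-face means $u_2$ and $u_5$ share the neighbour $u_6$, so the usual recoloring trick may be blocked. I would handle it by first uncoloring $u_2$ (or $u_3$) together with $u_1$. I would then use Remark~\ref{saveonecolor-2} to show the two uncolored vertices have at least $1$ and $2$ available colors, and color them in order, as in the proof of Lemma~\ref{no333-2}. If counting still falls short by one, I would instead try identifying $u_1$ with $u_3$, as in Case~1 of Lemma~\ref{no333-2}, exploiting that $u_5$ and $v_3$ each see a $1$-color.
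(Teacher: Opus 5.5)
\textbf{The extension step is missing.} Your reduction is admissible. Deleting $u_1$ and adding $u_4u_5$ (absent by Lemma~\ref{no333-2}) preserves planarity and $\Delta\le 4$, and in $G'$ every pair in $N(u_1)$ is within distance two, so a good coloring of $G'$ is a good partial coloring of $G$. This differs from the paper. The paper identifies the endpoints $u_1,u_3$ of the shared triangle edge (your fallback) and gives $f(u')$ to one of them, so only the other remains to be colored. It also disposes of $u_3u_5\in E(G)$ separately, because Lemma~\ref{no333-2} does not exclude that case; your claim $u_5\neq v_3$ is therefore unjustified.

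The real problem is that you never show every coloring of $G'$ extends, which is the whole content of the lemma, and the sketch you give is wrong in several places:
\begin{itemize}
\item Nothing forces $1_a,1_b$ onto a non-adjacent pair. Adjacent vertices may carry different $1$-colors (e.g.\ $f(u_2)=1_a$, $f(u_3)=1_b$).
\item Three or four of $u_2,\dots,u_5$ may be $1$-colored (e.g.\ $f(u_2)=f(u_4)=1_a$, $f(u_3)=f(u_5)=1_b$). So your case list is incomplete, and it does not match Cases~2.1--2.4 of Lemma~\ref{no333-2}, where $u_2u_5$ was an added edge.
\item The count is off: there are at most $11$ vertices within distance two of $u_1$, not $14$.
\end{itemize}

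\textbf{Repeated $2$-colors defeat your counting principle.} The principle ``each such vertex must see both $1$-colors'' only applies to a $2$-colored vertex whose color is unique within distance two of $u_1$. Your $G'$ allows repeats, because $u_5$ is at distance three from $v_3$ and $w_2$. Let $v_6,w_6$ be the other neighbours of $u_6$, and take
\[
f(u_2)=f(v_4)=1_a,\quad f(u_3)=1_b,\quad f(u_5)=f(v_3)=2_2,
\]
with $u_4,w_4,u_6,w_2,v_5,w_5$ colored $2_1,2_3,\dots,2_7$. Then $u_1$ is blocked, and none of your tools helps:
\begin{itemize}
\item Recoloring $u_5$ with a $1$-color is legal in $G$ but releases nothing, since $2_2$ stays on $v_3$.
\item Neither $u_2$ nor $u_3$ can change its $1$-color, since they are adjacent.
\item Uncoloring $u_3$ so that $u_1$ takes $1_b$ fails once $N(v_3)$ contains $2_6,2_7$. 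Uncoloring $u_2$ so that $u_1$ takes $1_a$ fails once $N(w_2)\cup\{v_6,w_6\}$ contains $2_3,2_6,2_7$.
\end{itemize}
This configuration can still be rescued: uncolor $u_3$, give $u_1$ the color $1_b$, move $u_5$ and $v_3$ to $1$-colors, and give $u_3$ the released $2_2$. That is a multi-step recoloring of a kind your proposal never develops.

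\textbf{Your flagged obstacle is the easy case.} Suppose $f(u_2)=1_a$, $f(u_5)=1_b$, and $u_3,u_4$ are $2$-colored. Then $u_2$ must see $1_b$ and $u_5$ must see $1_a$; otherwise switch its $1$-color and give $u_1$ the freed one. Since $u_6$ is adjacent to both, this forces $f(w_2)=1_b$ and $1_a\in\{f(v_5),f(w_5)\}$. So in a blocking coloring the remaining seven vertices carry the seven $2$-colors once each. Then $u_3$ sees no $1_b$, can be recolored $1_b$, and its $2$-color is freed for $u_1$.

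As written, this is a plan rather than a proof. You need either a complete case analysis for your deletion, or an actual execution of the identification of $u_1$ with $u_3$ as in the paper.
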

\begin{proof}
Let $u_1u_2u_3u_1$ and $u_1u_2u_4u_1$ be two $3$-faces sharing the edge $u_1u_2$. By Lemma~\ref{3cycle4vertices-2}, $u_1,u_2,u_3,u_4$ are all $4$-vertices. The face $u_1u_2u_3u_1$ cannot share the edge $u_1u_2$ with another $4$-face, since otherwise this $4$-face, $u_1u_2u_3u_1$, or $u_1u_2u_4u_1$ is a separating cycle, which contradicts they are faces. Suppose $u_1u_2u_3u_1$ shares the edge $u_1u_3$ with a $4$-face $u_1u_5u_6u_3u_1$. We know $u_4 \neq u_5$ and $u_2 \neq u_6$, since otherwise $u_1u_5u_6u_3u_1$, $u_1u_2u_3u_1$, or $u_1u_2u_4u_1$ is a separating cycle, which contradicts they are faces. By Lemma~\ref{no333-2}, $u_2, u_4 \neq u_6$ and $u_2 \neq u_5$. Therefore, $|\{u_1, u_2, u_3, u_4, u_5, u_6\}| = 6$. Let $N(u_2)=\{u_1,u_3,u_4,v_2\}, N(u_3)=\{u_1,u_2,u_6,v_3\}$. By Lemma~\ref{no333-2}, $u_4u_5, u_3u_4, u_2u_6, u_3u_5 \notin E(G)$. We identify $u_1$ and $u_2$ (call this new vertex $u'$) to obtain a planar graph $G'$ with $\Delta(G)\le4$. By the minimality of $G$, $G'$ has a good coloring $f$. 

We first show $u_2u_5 \notin E(G)$. Suppose not, i.e., $u_2u_5 \in E(G)$. We color $u_2$ with $f(u')$. If $f(u')$ is a $1$-color, say $1_a$, then, by Remark~\ref{saveonecolor-2}, each of $u_4, u_5$ sees $1_b$. Therefore, we can color $u_1$ with an available $2$-color. Otherwise, $f(u')$ is a $2$-color. By applying Remark~\ref{saveonecolor-2} on $u_4, u_5$, we know each of $u_4, u_5$ sees a $1$-color. We claim $|\{f(u_3), f(v_3)\} \cap \{1_a, 1_b\}| \ge 1$, since otherwise we recolor $u_3$ with an available $1$-color. Therefore,  we can color $u_1$ with an available $2$-color.

\textbf{Case 1:} $f(u')$ is a $1$-color, say $f(u')=1_a$. Each of $u_1,u_2$ must see $1_b$ at least once, since otherwise we color one of them with $1_b$ and the other with $1_a$. If $f(v_2)=1_b$, then we color $u_2$ with $1_a$. By Remark~\ref{saveonecolor-2}, each of $u_3, u_4$ sees $1_b$ and $u_5$ sees $1_a$. We color $u_1$ with an available $2$-color. Therefore, we may assume $f(v_2)$ is a $2$-color and $v_2$ sees $1_b$. If $f(u_5)=1_b$, then we color $u_1$ with $1_a$. By Remark~\ref{saveonecolor-2}, each of $u_3, u_4$ see $1_b$ and $v_2$ sees $1_a$. We can color $u_2$ with an available $2$-color. Therefore, we may also assume $f(u_5)$ is a $2$-color and $u_5$ sees $1_b$. We claim $f(u_6) = 1_b$, since otherwise, we color $u_2$ with $1_a$, and by Remark~\ref{saveonecolor-2}, each of $u_3, u_4, u_5$ see $1_b$. Thus, we can color $u_1$ with an available $2$-color. Therefore, we can assume $f(u_6)=1_b$. This implies $f(u_3) \neq 1_b$ and thus $f(u_4) = 1_b$. By Remark~\ref{saveonecolor-2}, $u_5$ sees $1_a$. We further claim that $u_4$ sees $1_a$, since otherwise, we can recolor $u_4$ with $1_a$ and color $u_2$ with $1_b$. We may assume $1_a \in \{f(u_3), f(v_3)\}$, since otherwise we recolor $u_3$ with $1_a$. We color $u_1$ with an available $2$-color. Therefore, $u_4$ must see $1_a$.  We color $u_2$ with $1_a$ and at most six $2$-colors appear within distance two from $u_1$. We color $u_1$ with an available $2$-color.
    
\textbf{Case 2:} $f(u')$ is a $2$-color. Then $1_a,1_b \in \{f(u_3),f(u_4),f(u_5)\}$, since otherwise we can color $u_1$ with an available $1$-color and $u_2$ with $f(u')$. Similarly, $1_a,1_b \in \{f(u_3),f(u_4),f(v_2)\}$. We color $u_2$ with $f(u')$.

\textbf{Case 2.1:} $f(u_3)$ is a $1$-color, say $1_a$. Then $f(u_4)$ must be $1_b$. Otherwise, $f(u_5) = f(v_2) = 1_b$. By Remark~\ref{saveonecolor-2} and $f(u_6) \neq 1_a$, each of $u_4$ and $u_5$ sees $1_a$. We color $u_1$ with an available $2$-color. Thus, we may assume $f(u_4) = 1_b$ and $u_4$ sees $1_a$. If $f(u_5) = 1_b$, then, since $u_3$ must see $1_b$, we color $u_1$ with an available $2$-color. Hence, $f(u_5) \neq 1_b$ and $u_5$ sees $1_b$. By Remark~\ref{saveonecolor-2}, $u_5$ sees $1_a$. We color $u_1$ with an available $2$-color.

\textbf{Case 2.2:} $f(u_3)$ is a $2$-color. Then $\{f(u_4), f(u_5)\} = \{1_a, 1_b\}$, say $f(u_4) = 1_a$ and $f(u_5) = 1_b$. Furthermore, $f(v_2) = 1_b$. By Case 2.1, $u_3$ sees $1_a, 1_b$. Since $f(u_6) \neq 1_b$, $f(v_3) = 1_b$ and $f(u_6) = 1_a$. Therefore, at most six $2$-colors appear within distance two from $u_1$. We color $u_1$ with an available $2$-color.
\end{proof}

\begin{lemma}\label{bowtie-2}
Two $3$-cycles cannot share a vertex without sharing an edge.
\end{lemma}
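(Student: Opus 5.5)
Let $u_1u_2u_3u_1$ and $u_1u_4u_5u_1$ be two $3$-cycles sharing only the vertex $u_1$. We argue by reducibility in the minimal counterexample $G$, in the style of Lemma~\ref{superbowtie} from Section 2.

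\emph{Setup.} By Lemma~\ref{3cycle4vertices-2}, all of $u_1,\dots,u_5$ are $4$-vertices, so $N(u_1)=\{u_2,u_3,u_4,u_5\}$. Suppose first that there is an edge between $\{u_2,u_3\}$ and $\{u_4,u_5\}$, say $u_2u_4$. Then $u_1u_2u_4u_1$ is a $3$-cycle sharing the edge $u_1u_2$ with $u_1u_2u_3u_1$ and the edge $u_1u_4$ with $u_1u_4u_5u_1$. This configuration should be ruled out by Lemma~\ref{no333-2}, possibly combined with Lemma~\ref{no334-2} after checking which of these cycles are faces. So we may assume $u_2u_4,u_2u_5,u_3u_4,u_3u_5\notin E(G)$.

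\emph{Reduction.} Delete $u_1$ and add the edges $u_2u_5$ and $u_3u_4$. This keeps the graph planar, since $u_1$ lies on a face containing consecutive neighbours in the right cyclic order, and the order can be chosen so that the new edges do not cross. It also keeps $\Delta\le 4$, because each of $u_2,\dots,u_5$ loses the edge to $u_1$ and gains exactly one new edge. By minimality, $G'$ has a good coloring $f$. In $G'$, the four vertices $u_2,u_3,u_4,u_5$ induce a $4$-cycle $u_2u_3u_4u_5u_2$ (or a $K_4$ minus a matching), so their $2$-colors are pairwise distinct and no $2$-conflict arises in $G$. We then extend $f$ to $u_1$.

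\emph{Case analysis on the $1$-colors used on $\{u_2,u_3,u_4,u_5\}$.} Since $u_2u_3,u_3u_4,u_4u_5,u_5u_2\in E(G')$, each $1$-color appears on at most two of these vertices, namely on an opposite pair $\{u_2,u_4\}$ or $\{u_3,u_5\}$.
\begin{itemize}
\item If some $1$-color is missing from $\{u_2,u_3,u_4,u_5\}$, color $u_1$ with it.
\item Otherwise both $1_a$ and $1_b$ appear. Within distance two of $u_1$ there are at most $4+4\cdot 2=12$ vertices. We will show that at least six of them carry $1$-colors, so at most six $2$-colors are seen and one of the seven $2$-colors is free.
\end{itemize}
To obtain these $1$-colors, apply Remark~\ref{saveonecolor-2}(i) to each neighbour $u_i$ whose color is a $2$-color: it forces $u_i$ to see both $1$-colors, or else we recolor $u_i$ with a $1$-color and reduce to the previous subcase. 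Apply Remark~\ref{saveonecolor-2}(ii) to the pairs $u_2u_3$ and $u_4u_5$. When a neighbour $u_i$ carries $1_a$ and does not see $1_b$, recolor $u_i$ with $1_b$; this frees $1_a$ at $u_1$ unless another neighbour also has $1_a$. The subcases are split by the pattern: exactly two $1$-colored neighbours (adjacent in $G'$, such as $u_2,u_3$ or $u_2,u_5$), three, or four (say $u_2,u_4$ with $1_a$ and $u_3,u_5$ with $1_b$).

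\emph{Main obstacle.} The hardest subcase is when the two $1$-colored neighbours lie on different triangles, say $f(u_2)=1_a$ and $f(u_5)=1_b$. Their other neighbours $u_6,u_7$ and $u_8,u_9$ may be $2$-colored without this contradicting anything. For this case we try recoloring swaps. Swap the colors of $u_2$ and $u_3$, which is allowed since both are now adjacent only to $u_1$ and to each other within the configuration and the check is local. Alternatively, uncolor $u_2$, color $u_1$ with $1_a$, and recolor $u_2$ with a $2$-color; we expect $u_2$ to retain at least one available $2$-color after the forced $1$-colors near it are counted. This mirrors Cases 2.1 and 2.2 of Lemma~\ref{superbowtie}. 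If these counts still fall short by one, we will use the separating-cycle argument of Lemma~\ref{no5cycle3cyclebowtie} to force extra non-adjacency among $u_6,\dots,u_9$.
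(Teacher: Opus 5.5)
Your reduction is the paper's up to relabeling: you use Lemma~\ref{no333-2} to exclude edges between $\{u_2,u_3\}$ and $\{u_4,u_5\}$, then delete $u_1$ and add two non-crossing chords. The subcase with a missing $1$-color is also fine. The gap is in everything after that.

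Your plan is to force six $1$-colored vertices among the twelve vertices within distance two of $u_1$ via Remark~\ref{saveonecolor-2}. This fails when all four neighbors are $1$-colored, say $f(u_2)=f(u_4)=1_a$ and $f(u_3)=f(u_5)=1_b$. Then nothing is forced. Each $u_i$ already sees the other $1$-color inside its own triangle, and part (ii) is vacuous on $u_2u_3$ and $u_4u_5$. The eight second neighbors $v_2,w_2,\dots,v_5,w_5$ can all be $2$-colored, each seeing both $1$-colors elsewhere, and together use all seven $2$-colors with one repeated. So $u_1$ sees every color, and no count helps.

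The paper needs a genuinely different move here (its Case 3.1). It first shows every second neighbor is $2$-colored and sees $1_a,1_b$ outside the configuration. It then uncolors the two $1_b$-vertices, gives $u_1$ the color $1_b$, and recolors those two vertices with $2$-colors. This works because exactly one $2$-color is repeated among the second neighbors, so the two vertices have disjoint candidate sets. The three-$1$-colored case (Case 3.2) needs a longer recoloring chain through the neighborhood of a second neighbor before it reduces to Case 3.1. The configurations in which two of $u_2,\dots,u_5$ share a neighbor in $G-u_1$ (Cases 1 and 2) need separate handling. Your proposal contains none of this.

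Your case logic also runs in the wrong direction. Recoloring a $2$-colored neighbor $u_i$ with a $1$-color does not ``reduce to the previous subcase'', since no $1$-color becomes missing at $u_1$. It increases the number of $1$-colored neighbors and pushes you into exactly the three- and four-colored cases you have not settled. That is why the paper treats four, then three, then two.

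Conversely, the subcase you call the main obstacle, $f(u_2)=1_a$ and $f(u_5)=1_b$, is routine once those cases are done:
\begin{itemize}
\item If $1_b\notin f(\{v_3,w_3\})$, you may recolor $u_3$ with $1_b$ and land in the three-colored case. Likewise if $1_a\notin f(\{v_4,w_4\})$ for $u_4$. So you may assume $1_b\in f(\{v_3,w_3\})$ and $1_a\in f(\{v_4,w_4\})$.
\item Your own move handles $u_2$: recolor $u_2$ with $1_b$ (legal in $G$ since $u_2u_5\notin E(G)$) and give $u_1$ the color $1_a$. This forces $1_b\in f(\{v_2,w_2\})$, and symmetrically $1_a\in f(\{v_5,w_5\})$.
\end{itemize}
Together these give the six $1$-colored vertices.

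These recolorings must be checked in $G-u_1$ rather than in $G'$. The same point covers the pattern $f(u_2)=1_a$, $f(u_4)=1_b$ on a $G'$-opposite pair, which your list of two-colored cases omits.

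The fallbacks you propose would not work anyway. Swapping the colors of $u_2$ and $u_3$ is not automatically proper; it depends on the colors at $v_2,w_2,v_3,w_3$ and their neighborhoods. And the lemma is about $3$-cycles, not faces, so there is no face for a separating-cycle argument to exploit.
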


\begin{proof} 
Suppose $u_1u_2u_3u_1$ is a $3$-cycle. By Lemma~\ref{3cycle4vertices-2}, $u_1,u_2,u_3$ are $4$-vertices. Let $N(u_1)=\{u_2,u_3,u_4,u_5\}$ and $ N(u_2)=\{u_1,u_3,v_2,w_2\}$. In order to prove the lemma, we need to show that $u_4u_5\notin E(G)$. Suppose not, i.e., $u_4u_5\in E(G)$. By Lemma~\ref{no333-2}, $u_2u_4, u_2u_5, u_3u_4, u_3u_5\notin E(G)$. We delete $u_1$ and add $u_2u_4, u_3u_5$ to obtain a planar graph $G'$ with $\Delta(G')\le4$. By the minimality of $G$, $G'$ has a good coloring $f$. Note that $\{1_a,1_b\}\subseteq\{f(u_2),f(u_3),f(u_4),f(u_5)\}$, since otherwise we can color $u_1$ with an available $1$-color. By Lemma~\ref{3cycle4vertices-2}, $u_4,u_5$ are also $4$-vertices. Let $N(u_i) - \{u_1, u_2, u_3, u_4, u_5\} = \{v_i, w_i\}$, where $i \in \{2,3,4,5\}$.

\textbf{Case 1:} $u_2$ and $u_4$ share a neighbour in $G-u_1$, say $v_2 = v_4$. If $f(v_2)\in\{1_a,1_b\}$, say $f(v_2)=1_a$, then $f(u_2), f(u_4) \neq 1_a$. We claim each of $u_3$ and $u_5$ must see $1_a$, since otherwise we recolor $u_3, u_5$ with $1_a$. Furthermore, we claim each of $u_2$ and $u_4$ must see $1_b$, since otherwise we recolor $u_2,u_4$ with $1_b$. However, $u_1$ has at least one available $2$-color, and we are done. Therefore, we can assume $f(v_2)$ is a $2$-color. Then we claim $f(w_2) \notin \{1_a, 1_b\}$. Suppose not, i.e., $f(w_2) = 1_a$. Thus, $f(u_2) \neq 1_a$. Then $u_3$ must see $1_a$, since otherwise we recolor $u_3$ with $1_a$. Furthermore, $u_2$ must see $1_b$, since otherwise we recolor $u_2$ with $1_b$. Therefore, at most two of $w_2, u_2, u_3, v_3, w_3$ have $2$-colors. By Remark~\ref{saveonecolor-2}, $u_4$ and $u_5$ see at most three $2$-colors. We can color $u$ with an available $2$-color. Similarly, $v_3, w_3, w_4, v_5, w_5$ cannot be a $1$-color. We uncolor $u_2,u_3,u_4,u_5$. Each of $v_2,w_2,v_3,w_3,w_4,v_5,w_5$ must see both $1_a$ and $1_b$, since otherwise we can recolor one of them with a $1$-color, color $u_2,u_3,u_4,u_5$ with available $1$-colors and $u_1$ with an available $2$-color. Therefore, $u_2,u_4$ has at least $2,2$ available $2$-colors, and we can color $u_2,u_4$ in order with $2$-colors, $u_3,u_5$ with $1_a$ and $u_1$ with $1_b$. 

    
\textbf{Case 2:} $u_2$ and $u_3$ share a neighbour in $G-u_1$, say $v_2 = v_3$. We claim $f(v_4) \notin \{1_a, 1_b\}$. Suppose not, say $f(v_4) = 1_a$ and note $f(u_4) \neq 1_a$. Then $u_5$ must see $1_a$, since otherwise we recolor $u_5$ with $1_a$. Furthermore, $u_4$ must see $1_b$, since otherwise we recolor $u_4$ with $1_b$. Therefore, at most three of $u_4,v_4,w_4,u_5,v_5,w_5$ have a $2$-color. By Remark~\ref{saveonecolor-2}, at most three of $w_2,u_2,v_2,u_3,w_3$ have $2$-colors. We can color $u_1$ with an available $2$-color, and we are done. Similarly, $w_4, v_5, w_5, w_2, w_3$ have $2$-colors.


\textbf{Case 2.1:} $f(v_2)$ is a $1$-color, say $f(v_2)=1_a$. We uncolor $u_2,u_3,u_4,u_5$ and color $u_3,u_4,u_5$ with available $1$-colors. We claim that $w_2$ must see both $1_a$ and $1_b$. Suppose not, we can recolor $w_2$ with an available $1$-color. The number of available $2$-colors at $u_1,u_2$ are $2,1$. We color them in the order $u_2,u_1$, and we are done. Thus, the number of available $2$-colors at $u_1,u_2$ are $1,2$, and we color them in order.


\textbf{Case 2.2:} $f(v_2)$ is a $2$-color. Uncolor $u_2,u_3,u_4,u_5$. Each of $w_2,w_3,v_4,w_4,u_5,v_5$ must see both $1_a$ and $1_b$, since otherwise we can color one of them with a $1$-color, color $u_2,u_3,u_4,u_5$ with available $1$-colors and $u_1$ with an available $2$-color. Furthermore, $v_2$ must see both $1_a$ and $1_b$, since otherwise we can color $v_2,u_3,u_4,u_5$ with available $1$-colors and $u_1,u_2$ have $1,2$ available colors. We color them in the order $u_1,u_2$. Thus, the number of available $2$-colors at $u_3,u_4$ are $2,1$, we color them in the order $u_4,u_3$. Then we can color $u_2,u_5$ with $1_a$ and $u_1$ with $1_b$.

\textbf{Case 3:} Any two of $u_2,u_3,u_4,u_5$ do not share a neighbour in $G-u_1$. 

\textbf{Case 3.1:} All of $f(u_2), f(u_3), f(u_4), f(u_5)$ are $1$-colors, say $f(u_2)=f(u_5)=1_a$ and $f(u_3)=f(u_4)=1_b$. All seven $2$-colors must appear in $v_2,w_2,v_3,w_3,v_4,w_4,v_5,w_5$, since otherwise we can color $u_1$ with an available $2$-color. If $1_b \in \{f(v_2), f(w_2)\}$, say $f(v_2) = 1_b$, we uncolor $u_3,u_4$. Each of $w_2,v_3,w_3,v_4, w_4, v_5,w_5$ must see $1_b$ and each of $v_3,w_3,v_4,w_4$ must see $1_a$, since otherwise, we can recolor one of them with an available $1$-color and color $u_1$ with an available $2$-color. We color $u_1$ with $1_b$. The number of available $2$-colors at $u_3,u_4$ are at least $2,1$. We can color them in the order $u_4,u_3$, and we are done. Therefore, $f(v_2)$ must be a $2$-color, and $v_2$ sees both $1_a, 1_b$ in $N(v_2) - u_2$ (we need to switch the colors of $u_2$ and $u_3$ for the proof $v_2$ must see $1_a$ in $N(v_2) - u_2$). Similarly, each $f(v_i)$ ($f(w_i)$) are $2$-colors and $v_i$ ($w_i$) must see $1_a, 1_b$ in $N(v_i) - u_i$ ($N(w_i) - u_i$), where $2 \le i \le 5$.

We uncolor $u_3, u_4$. Note that exactly one of the $2$-colors appear twice. By symmetry, we may assume $v_2,w_2,v_3,w_3,v_4, w_4,v_5,w_5$ have colors $2_1, 2_2, 2_3, 2_7, 2_4, 2_5, 2_6, 2_7$ or $2_1, 2_2, 2_3, 2_1, 2_4, 2_5, 2_6, 2_7$. In the former case, $u_3$ has an available color from $\{2_4, 2_5, 2_6\}$ and $u_4$ has an available color from $\{2_1, 2_2, 2_3\}$. In the latter case, $u_3$ has an available color from $\{2_4, 2_5, 2_6, 2_7\}$ and $u_4$ has an available color from $\{2_1, 2_2, 2_3\}$. Therefore, we can always color $u_3, u_4$ with $2$-colors, and color $u_1$ with $1_b$. This is a contradiction.

\textbf{Case 3.2:} Three of $\{f(u_2),f(u_3),f(u_4),f(u_5)\}$ are $1$-colors, say $f(u_2)=1_a$, $f(u_3) = f(u_4) = 1_b$, and $f(u_5) = 2_1$. By Case 3.1, $u_5$ must see color $1_a$, say $f(v_5)=1_a$. We next show $f(w_5) = 1_b$. Suppose not, $f(w_5) \neq 1_b$. We claim $u_4$ must see $1_a$, since otherwise we can recolor $u_4, u_5$ with $1_a, 1_b$, which is solved in Case 3.1. Since $u_1$ must see all the seven $2$-colors, $f(w_5)$ must be a $2$-color and $w_5$ must see $1_a, 1_b$ (otherwise, we recolor $w_5$ with a color in $\{1_a, 1_b\}$). Furthermore, $v_5$ must see $1_b$ since otherwise we recolor $v_5, u_5$ with $1_b, 1_a$, and this is solved in Case 3.1. We uncolor $u_5$, and $u_1, u_5$ have at least $1, 2$ available $2$-colors. 

Thus, $f(w_5) = 1_b$. Since $u_1$ must see all the seven $2$-colors, we may assume $v_2,w_2,v_3,w_3,v_4,w_4$ are colored with $2_2, 2_3, 2_4, 2_5, 2_6, 2_7$. We know $v_5$ sees $1_b$ since otherwise we recolor $v_5, u_5$ with $1_b, 1_a$. Let $N(w_5) = \{u_5, x_5, y_5, z_5\}$ and . Then $w_5$ sees $1_a$, say $f(x_5) = 1_a$, since otherwise we recolor $w_5, u_5, u_4$ with $1_a, 1_b, 1_a$, and color $u_1$ with $2_1$. This is a contradiction. Furthermore, $u_5$ must see all of $2_2, 2_3, 2_4, 2_5, 2_6, 2_7$, since otherwise we recolor $u_5$ with an available $2$-color from $\{2_2, 2_3, 2_4, 2_5, 2_6, 2_7\}$ and color $u_1$ with $2_1$. Therefore, we may assume $v_5$ has $2_4, 2_5$ in its neigbours and $f(y_5) = 2_6$ and $f(z_5) = 2_7$. We can also assume $y_5,z_5$ both see $1_a$, since otherwise we can recolor $y_5$ or $z_5$ with $1_a$. We then claim $x_5$ sees $1_b$ in $N(x_5) - w_5$, since otherwise we recolor $x_5, w_5$ with $1_b, 1_a$, which is a contradiction with the fact that $f(w_5)$ must equal $1_b$. Moreover, we can switch the colors of $w_5$ and $y_5$ or the colors of $w_5$ and $z_5$, which implies either $1_b \in f(N(y_5) - w_5)$ or $2_6 \in f(N(x_5) - w_5) \cup f(N(z_5) - w_5)$ and either $1_b \in f(N(z_5) - w_5)$ or $2_7 \in f(N(x_5) - w_5) \cup f(N(y_5) - w_5)$. We uncolor $u_4,u_5$. However, we must be able to recolor $w_5$ with one of $2_1,2_2,2_3,2_4, 2_5$, and color $u_5, u_4$ with $1_b, 1_a$. This is already solved in Case 3.1.

\textbf{Case 3.3:} Two of $\{f(u_2),f(u_3),f(u_4),f(u_5)\}$ are $1$-colors. By symmetry, we may assume $f(u_2) = 1_a, f(u_3) = 1_b, f(u_4) = 2_1, f(u_5) = 2_2$ or $f(u_2) = 1_a, f(u_3) = 2_1, f(u_4) = 1_b, f(u_5) = 2_2$. In the former case, we know both $u_4, u_5$ see $1_a, 1_b$, since otherwise we can recolor $u_4$ or $u_5$ with a $1$-color and this is already solved in Case 3.2. In the latter case, we know from Case 3.2 that $u_3$ sees $1_b$ and $u_5$ sees $1_a$. Furthermore, $u_4$ sees $1_a$ since otherwise we recolor $u_4$ with $1_a$ and color $u_1$ with $1_b$. Similarly, $u_2$ sees $1_b$. Therefore, in both the former and the latter case, $u_1$ sees at most six $2$-colors, and we can color it with an available $2$-color. This is a contradiction.
\end{proof}


\begin{lemma}\label{no434-2}
A $3$-face can share edges with $4$-faces via at most one edge.
\end{lemma}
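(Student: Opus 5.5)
I would follow the template of Lemma~\ref{no434} from Section~2, adapted to two $1$-colors. Let $u_1u_4u_5u_1$ be a $3$-face sharing the edge $u_1u_4$ with the $4$-face $u_1u_2u_3u_4u_1$. The configuration to exclude is a second $4$-face through $u_1u_5$ or $u_4u_5$; by symmetry take $u_1u_5u_6u_7u_1$.

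\textbf{Structure.} First I would fix the local structure.
\begin{itemize}
\item By Lemma~\ref{3cycle4vertices-2}, $u_1,u_4,u_5$ are $4$-vertices.
\item By Lemmas~\ref{no333-2}, \ref{no334-2} and~\ref{bowtie-2}, together with the fact that faces are not separating cycles, the named vertices are distinct.
\item The same results rule out the unwanted chords (e.g.\ $u_2u_5$, $u_3u_5$, $u_7u_4$, $u_6u_4$).
\item Write $N(u_1)=\{u_2,u_4,u_5,u_7\}$, $N(u_4)=\{u_1,u_3,u_5,v_4\}$ and $N(u_5)=\{u_1,u_4,u_6,v_5\}$.
\end{itemize}

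\textbf{Reduction.} I would delete the triangle $u_1,u_4,u_5$ and add edges among the freed boundary vertices so that distances between them stay at most two. Natural choices are $u_2u_7$ (or $u_2u_3$), $u_3v_4$ and $u_6v_5$, whenever these are not already edges. Planarity is preserved because these pairs lie on common faces, and each endpoint loses one neighbour, so $\Delta(G')\le 4$ holds. Minimality then gives a good coloring $f$ of $G'$ with no $2$-conflicts among the boundary vertices $B$.

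\textbf{Extension.} I would then extend $f$ to the triangle. The two vertices of the triangle not colored with a $2$-color must receive distinct $1$-colors $1_a,1_b$. The case split is on how many of $1_a,1_b$ appear on the outer neighbours $u_2,u_7$ (of $u_1$), $u_3,v_4$ (of $u_4$) and $u_6,v_5$ (of $u_5$).

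\begin{itemize}
\item If some pair of triangle vertices can take $1_a,1_b$ directly, the remaining vertex sees at most $2+2\cdot$(few) $2$-colored vertices. Here Remark~\ref{saveonecolor-2}(i) and~(ii) are used repeatedly: each colored outer neighbour and its own neighbourhood already contain both $1$-colors, or else we recolor it with a $1$-color. This leaves at most six $2$-colors visible, so a seventh is free.
\item Otherwise the outer neighbours carry $1$-colors in a blocking pattern. The $4$-face $u_1u_2u_3u_4$ and the $4$-face $u_1u_5u_6u_7$ then mean that $u_2,u_3$ (resp.\ $u_6,u_7$) are adjacent in $G$. So at most one of each adjacent pair carries a given $1$-color, which forces specific patterns such as $f(u_2)=1_a$, $f(u_3)=1_b$.
\item In these forced patterns I would use recoloring swaps: recolor an outer vertex to the other $1$-color when it does not see it, or swap a $2$-color into a boundary vertex. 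Counting then shows the triangle vertices have at least $1,2,2$ (or $2,2,3$) available $2$-colors after one of them takes a $1$-color, and Hall's Theorem finishes the extension.
\end{itemize}

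\textbf{Main obstacle.} I expect the hardest step to be the case where all of $u_2,u_3,u_6,u_7,v_4,v_5$ that could carry $1$-colors are arranged to block both $1_a$ and $1_b$ at every triangle vertex. Only one triangle vertex can then get a $1$-color, and the other two need two distinct $2$-colors out of seven, each seeing up to about eight vertices. I would first try the Section~2 argument: assign a $1$-color to the triangle vertex whose outer neighbours avoid it, then apply Remark~\ref{saveonecolor-2}(ii) to the adjacent outer pairs $u_2u_3$ and $u_6u_7$, which guarantees two $1$-colors among each such pair's neighbourhood and saves two $2$-colors per vertex. If that count falls one short, I would uncolor an outer vertex such as $u_2$ and recolor it last, as in Case~2.1 of Lemma~\ref{superbowtie}.
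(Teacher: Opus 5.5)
Your reduction is the same one the paper uses: delete the triangle and join the two outer neighbours of each triangle vertex ($u_2u_7$, $u_3v_4$, $u_6v_5$ in your labels; $u_2u_3$ is already an edge of the $4$-face, so it is not an alternative). \textbf{The gap is the extension step.} That step is the whole content of the lemma, and you assert it rather than prove it. Your quantitative claims do not follow from Remark~\ref{saveonecolor-2}:
\begin{itemize}
\item The vertex $u_1$ has ten boundary vertices within distance two: $u_2,u_3,u_6,u_7,v_4,v_5$ and the two further neighbours of each of $u_2$ and $u_7$. The vertices $u_4$ and $u_5$ have eleven each.
\item Remark~\ref{saveonecolor-2}(i) only forces one $1$-colour into each of the groups $N[u_2]-u_1$ and $N[u_7]-u_1$. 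When $u_2$ itself carries the only $1$-colour there, nothing forces a second one. That leaves up to eight $2$-coloured vertices near $u_1$, enough to display all seven $2$-colours. So ``at most six $2$-colours visible'' is unjustified.
\item Remark~\ref{saveonecolor-2}(ii) for the pair $u_2u_3$ puts two $1$-colours into $N[u_2]\cup N[u_3]$. These may lie on further neighbours of $u_2$ (at distance three from $u_4$) or of $u_3$ (at distance three from $u_1$). So it does not ``save two $2$-colours per vertex''.
\item Every such normalising recolouring places a $1$-colour on an outer neighbour of the triangle. This can destroy exactly the $1$-colour availability that the current case relies on. Each use must therefore be shown to fall into an already settled case, and that bookkeeping is missing.
\end{itemize}
Lemma~\ref{no434} could be handled this way because ten $2$-colours leave slack; with seven there is none.

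This is why the paper does not argue by hand here. It checks by computer all precolourings of the twenty boundary vertices. In your labels these form two big claws centred at $v_4,v_5$ and four small claws centred at $u_2,u_3,u_6,u_7$. Direct extension to the triangle genuinely fails in eight configurations. The paper settles these with moves reaching distance two from the triangle:
\begin{itemize}
\item recolouring a leaf of a small claw;
\item swapping the colours of a claw centre and one of its leaves;
\item then uncolouring that centre and colouring it together with the triangle.
\end{itemize}
Your ``main obstacle'' paragraph points to the right difficulty, but ``if that count falls one short, I would \dots'' is a plan, not an argument. To finish your route you need an exhaustive case analysis, or a computer check, showing that every precolouring of the boundary either extends or is transformed by such moves into one that does.
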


\begin{proof}
Let $u_1u_2u_3u_1$ be a $3$-face and $u_1u_2u_4u_5u_1$ be a $4$-face such that they share the edge $u_1u_2$. By Lemma~\ref{3cycle4vertices-2}, $u_1,u_2,u_3$ are all $4$-vertices. Let $N(u_1)=\{u_2,u_3,u_5,u_7\},N(u_2)=\{u_1,u_3,u_4,v_2\}$, $N(u_3)=\{u_1,u_2,u_6,v_3\}$. By Lemma~\ref{bowtie-2}, $v_2u_4,v_3u_6,u_5u_7\notin E(G)$. We need to prove $u_6u_7\notin E(G)$. Suppose not, i.e., $u_6u_7\in E(G)$. We delete $u_1,u_2,u_3$ and add $v_2u_4,v_3u_6,u_5u_7$ to obtain a planar graph $G'$ with $\Delta(G') \le 4$. By the minimality of $G$, $G'$ has a good coloring $f$. Let $N(u_4) = \{u_2,  u_5, v_4, w_4\}$, $N(u_5) = \{u_1, u_4, v_5, w_5\}$, $N(u_6) = \{u_3, u_7, v_6, w_6\}$, $N(u_7) = \{u_1, u_6, v_7, w_7\}$, $N(v_2) = \{u_2, x_2, y_2, z_2\}$, and $N(v_3) = \{u_3, x_3, y_3, z_3\}$. Let $B = \{u_4, u_5, u_6, u_7, v_2, v_3, v_4, w_4, v_5, w_5, v_6, w_6, v_7, w_7, x_2, y_2, z_2, x_3, y_3, z_3\}$ be the boundary vertices. We may assume $|B| = 20$, since if not, say $u_4,u_5$ have a common neighbour $v_4$ and each has a distinct neighbour $v_5, w_4$. We assume $v_4, v_5, w_4$ are colored with $f(v_4), f(w_4), f(v_5)$. It can be covered by the case when each of $u_4, u_5$ has two distinct neighbours $v_4,w_4, v_5, w_5$ and they are colored with $f(v_4), f(w_4), f(v_5), f(v_4)$. Our proof strategy is to extend every possible pre-coloring of $B$ in $G'$ to a good coloring of $G$. We use $C++$ programs (see Appendix for a detailed explanation) to check if each pre-coloring can be extended. It turns out that we cannot directly extend to $G$ on only $8$ cases (See Figure~\ref{434-badcases}). 
    
We solve those $8$ cases here. Since the argument is the same for Cases $1$, $2$, $3$, and $4$, and symmetric for Cases $1'$, $2'$, $3'$, and $4'$, we only show the argument for Case 1. We claim $w_4$ must see $1_b$, since otherwise, we can recolor $w_4$ with $1_b$, color $u_2$ with $2_5$, and color $u_1,u_3$ with available $1$-colors. Furthermore, $v_4$ must see $1_a$ in $N(v_4) - u_4$, since otherwise, we can switch the color of $v_4$ and $u_4$, and color $u_2$ with $1_a$, $u_3$ with $1_b$ and $u_1$ with an available $2$-color. We uncolor $u_4$. We claim $w_4$ must see $1_a$ in $N(w_4) - u_4$, since otherwise we recolor $w_4$ with $1_a$. Thus, the number of available $2$-colors at $u_1,u_4$ is at least $3,2$, we can color them in the order $u_4,u_1$ and color $u_2$ with $1_a$ and $u_3$ with $1_b$. This is a contradiction.
\end{proof}

\begin{lemma}\label{no5cycle3d3cycle3cycle-2}
If a $5$-face has a $3$-vertex, then it cannot be adjacent to two $3$-faces that do not share a vertex.
\end{lemma}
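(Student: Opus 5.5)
I would follow the template of Lemma~\ref{no5cycle3d3cycle3cycle} from Section 2, adapted to two $1$-colors and seven $2$-colors.

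\textbf{Setup and reduction.} Let $u_1u_2u_3u_4u_5u_1$ be a $5$-face with $u_1$ a $3$-vertex, and let $u_2u_3u_6u_2$ and $u_4u_5u_7u_4$ be $3$-faces. Write $N(u_1)=\{u_2,u_5,v_1\}$. By Lemma~\ref{3cycle4vertices-2}, all of $u_2,\dots,u_7$ are $4$-vertices and $u_2u_5\notin E(G)$. Lemmas~\ref{3cycle4vertices-2} and~\ref{bowtie-2} also give $v_1\notin\{u_3,u_4,u_6,u_7\}$. I delete $u_1$ and add the edge $u_2u_5$. The resulting $G'$ is planar with $\Delta(G')\le 4$, so minimality gives a good coloring $f$ of $G'$. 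Adding $u_2u_5$ ensures $u_2$ and $u_5$ receive distinct colors, and it removes any $2$-conflict between them in $G$. Thus the only danger when extending to $u_1$ is a $2$-conflict between $v_1$ and $u_2$ or $u_5$.

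\textbf{Case 1:} at most one $1$-color appears on $\{v_1,u_2,u_5\}$. Then $u_1$ can take an available $1$-color, and we are done.

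\textbf{Case 2:} both $1_a$ and $1_b$ appear on $\{v_1,u_2,u_5\}$. Apply Remark~\ref{saveonecolor-2}~(i) to each of $v_1,u_2,u_5$.
\begin{itemize}
\item A vertex with a $1$-color must also see the other $1$-color; otherwise we recolor it and free a $1$-color for $u_1$.
\item A vertex with a $2$-color must see both $1$-colors, for the same reason.
\end{itemize}
Each of the three neighbourhoods therefore contains two $1$-colors. The second neighbourhood of $u_1$ has at most $2+3+3=8$ vertices besides $v_1,u_2,u_5$. So $u_1$ sees at most $3+8-2\cdot 3+(\text{overlaps})$ $2$-colors, and this should leave at most six. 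The count needs care in two places:
\begin{itemize}
\item Sub-case $f(v_1)=1_a$, $f(u_2)=1_b$: the $1_b$ seen by $v_1$ might be $u_2$ itself. That is fine, since $u_2$ is a distinct vertex of distance one and still saves a slot.
\item When $f(v_1)$ is a $2$-color equal to $f(u_2)$ or $f(u_5)$, the repeated colors also save a slot.
\end{itemize}

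\textbf{Case 3:} the hard case, where $f(v_1)$ is a $2$-color equal to, say, $f(u_5)=2_1$, and $f(u_2)$ is not a $1$-color, or is a $1$-color but the counting fails. Mimicking Section 2, I would do the following.
\begin{itemize}
\item Since $u_5$ cannot be recolored with a $1$-color, $\{1_a,1_b\}\subseteq f(N[u_5]-u_1)=f(\{u_4,u_7,v_5\}\cup N(u_5))$.
\item Uncolor $u_5$ and ask whether it can receive a $2$-color different from $2_1$. The triangle $u_4u_5u_7$ forces the second neighbourhood of $u_5$ to be small: $u_4,u_7$ share neighbours, giving $|N_2[u_5]|\le 1+2+2+3+$ the vertices $u_2$ and $v_1$ through $u_1$.
\item Color $u_1$ first with a $1$-color or a free $2$-color, then recolor $u_5$. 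Split into subcases according to which of $u_4,u_7,v_5$ carry $1_a,1_b$, as in Cases 2.1--2.3 of Lemma~\ref{no5cycle3d3cycle3cycle}.
\end{itemize}
When $u_4$ carries a $1$-color, I would uncolor $u_3$ or $u_2$ as needed and use the triangle $u_2u_3u_6$ to gain available $2$-colors, then color greedily in a suitable order.

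\textbf{Main obstacle.} The main obstacle will be the subcase where both $1$-colors sit on $u_7$ and $v_5$ and all seven $2$-colors are tight around $u_5$. There I would try a recoloring swap, either moving a $1$-color from $u_7$ onto $u_4$ or exchanging the colors of $u_5$ and $v_5$, and then reduce back to an earlier subcase. If a clean argument fails, I would fall back on the computer-check strategy of Lemma~\ref{no434-2}, enumerating the precolorings of the boundary set $B$.
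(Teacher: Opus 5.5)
Your proposal has a genuine gap. Deleting only $u_1$ leaves no slack at all. Since $v_1$ is adjacent to the $3$-vertex $u_1$, Lemma~\ref{no33-2} makes $v_1$ a $4$-vertex, so $u_1$ has up to twelve colored vertices within distance two (your $3+8$ should be $3+9$). A free $2$-color therefore needs each of $N[v_1]-u_1$, $N[u_2]-u_1$, $N[u_5]-u_1$ to contain two $1$-colored vertices, and your Case~2 justification for this fails. Take $f(u_2)=f(v_1)=1_a$ and $f(u_5)=1_b$ (legal, since $u_2v_1\notin E(G)$), with $u_3,u_6,v_2$ all $2$-colored. Recoloring $u_2$ by $1_b$ frees nothing, because $v_1$ still carries $1_a$ and $u_5$ carries $1_b$. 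If moreover $v_1$ already sees $1_b$ and $u_5$ sees $1_a$, then $N[u_2]-u_1$ contributes three $2$-colored vertices, and up to seven distinct $2$-colors surround $u_1$. Case~1 is also stated too broadly. If $f(v_1)=f(u_5)$ is a $2$-color, then $v_1$ and $u_5$ are in $2$-conflict in $G$ through $u_1$ whatever $u_1$ receives. So Case~1 must assume $f(v_1)\notin\{f(u_2),f(u_5)\}$ (as the Section~2 version does), and that whole situation falls to Case~3.

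Case~3, the heart of the matter, is only a plan, and its key claim is false: the triangle $u_4u_5u_7$ does not make the second neighbourhood of $u_5$ small. With $u_1$ uncolored, $u_5$ still has up to twelve colored vertices within distance two: $u_4,u_7,v_5,u_2,v_1,u_3,v_4,v_7,w_7$ and the other neighbours of $v_5$. So giving $u_5$ a new $2$-color requires the same six savings as coloring $u_1$, and $u_1$ must still be colored afterwards. The Section~2 subcases go through because ten $2$-colors only require the bound ``at most nine'', i.e.\ three savings supplied by one $1$-color per neighbourhood. With seven $2$-colors this does not transfer, and that is exactly your unresolved ``main obstacle''.

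Falling back on enumeration does not rescue this reduction either. With only $u_1$ uncolored, the check just asks whether $u_1$ has a free color, which fails in all these tight configurations. The missing idea is the larger reducible configuration used in the paper:
\begin{enumerate}
\item Delete all five face vertices $u_1,\dots,u_5$ and add $u_6v_2,u_6v_3,u_7v_4,u_7v_5$.
\item Verify by computer that every precoloring of the $26$ boundary vertices extends, except for $11$ configurations.
\item Settle those $11$ by hand: give $u_5$ a $2$-color, force $v_6,w_6$ to see both $1$-colors, then give $u_6$ a $2$-color and $u_1,u_2,u_3,u_4,u_7$ $1$-colors.
\end{enumerate}
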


\begin{proof}
Suppose not, i.e., there is such a structure in $G$. Let $u_1u_2u_3u_4u_5u_1$ be a $5$-face, $u_1$ be a $3$-vertex, and $u_2u_3u_6u_2, u_4u_5u_7u_4$ be two $3$-faces. Let $N(u_1)=\{u_2, u_5, v_1\}$. By Lemma~\ref{3cycle4vertices-2}, $u_2u_5\notin E(G)$, $v_1\notin\{u_3, u_4, u_6, u_7\}$, and $d(u_2)=d(u_3)=d(u_4)=d(u_5)=d(u_6)=d(u_7)=4$. Let $N(u_2)=\{u_1,u_3,u_6,v_2\},N(u_3)=\{u_2,u_4,u_6,v_3\},N(u_4)=\{u_3,u_5,u_7,v_4\}$ and $N(u_5)=\{u_1,u_4,u_7,v_5\},$ $N(u_6)=\{u_2, u_3, v_6, w_6\},$ $ N(u_7)=\{u_4, u_5, v_7, w_7\},$ $N(v_1)=\{u_1, x_1, y_1, z_1\},$ $N(v_2)=\{u_2, x_2, y_2, z_2\},$ $N(v_3)=\{u_3, x_3, y_3, z_3\},$ $N(v_4)=\{u_4, x_4, y_4, z_4\}$ and $N(v_5)=\{u_5, x_5, y_5, z_5\}$. We delete $u_1,u_2,u_3,u_4,u_5$ and add $u_6v_2,u_6v_3,u_7v_4,u_7v_5$ to obtain a planar graph $G'$ with $\Delta(G')\le4$. By the minimality of $G$, $G'$ has a good coloring $f$. Let $B = \{u_6, u_7, v_1, v_2, v_3, v_4, v_5, v_6, w_6, v_7, w_7, x_1, \linebreak y_1, z_1, x_2, y_2, z_2, x_3, y_3, z_3, x_4, y_4, z_4, x_5, y_5, z_5\}$ be the boundary vertices. We may assume $|B|=26$, since if not, say $v_4,v_5$ have a common neighbour $x_4$ and each has two distinct neighbours $y_4,z_4,x_5,y_5$. We may assume $x_4,y_4,z_4,x_5,y_5$ are colored with $f(x_4),f(y_4),f(z_4),f(x_5),f(y_5)$. It can be covered by the case when each of $v_4,v_5$ has three distinct neighbours $x_4,y_4,z_4,x_5,y_5,z_5$ and they are colored with $f(x_4),f(y_4),f(z_4),\linebreak f(x_5),f(y_5),f(x_4)$. 
Our proof strategy is to extend every possible pre-coloring of $B$ in $G'$ to a good coloring of $G$.
We use $C++$ programs (see Appendix for a detailed explanation) to check if each pre-coloring can be extended. It turns out that we cannot directly extend to $G$ on only $11$ cases (See Figure~\ref{3d533-badcases}). 
In those cases, $u_6$ and $u_7$ are assigned a special marker, $-1$, representing an undetermined $1$-color. When we try to extend the pre-coloring, the configuration is considered extendable if replacing $-1$ with either $1_a$ or $1_b$ yields a valid extension.
    
We solve those $11$ cases here. Since the argument is the same for all of the $11$ cases, we only show the argument for Case 1. We color $u_5$ with $2_7$. $\{1_a,1_b\}\subseteq f(N(v_6))\cap f(N(w_6))$, since otherwise we recolor one of $v_6,w_6$ with an available $1$-color, $u_1, u_6, u_3, u_4, u_7$ with available $1$-colors and $u_2$ with $2_5$ or $2_6$. Thus, we can color $u_6$ with an available $2$-color and $u_1,u_2,u_3,u_4,u_7$ with available $1$-colors.
\end{proof}

\begin{lemma}\label{no5cycled3d33cycle-2}
If a $5$-face has two $3$-vertices, then it cannot be adjacent to any $3$-faces.
\end{lemma}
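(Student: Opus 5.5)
We argue as in the analogous Lemma~\ref{no5cycled3d33cycle} of Section 2, now for $(1^2,2^7)$-colorings. Suppose the structure exists. Let $u_1u_2u_3u_4u_5u_1$ be a $5$-face with two $3$-vertices. By Lemma~\ref{no33-2} the two $3$-vertices are non-adjacent, so up to symmetry they are $u_2,u_5$. By Lemma~\ref{3cycle4vertices-2}, a $3$-face adjacent to the $5$-face contains no $3$-vertex, so it must be $u_3u_4u_6u_3$, and $u_1,u_3,u_4,u_6$ are $4$-vertices. We use the same neighbourhood notation as in Lemma~\ref{no5cycled3d33cycle}: $N(u_2)=\{u_1,u_3,v_2\}$, $N(u_5)=\{u_1,u_4,v_5\}$, $N(u_1)=\{u_2,u_5,v_1,w_1\}$, $N(u_3)=\{u_2,u_4,u_6,v_3\}$, $N(u_4)=\{u_3,u_5,u_6,v_4\}$, $N(u_6)=\{u_3,u_4,v_6,w_6\}$. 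Lemmas~\ref{3cycle4vertices-2}, \ref{no333-2} and~\ref{bowtie-2}, together with the face property, give that these vertices are distinct and that $u_1u_4,u_1u_3,u_2u_5\notin E(G)$.

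\textbf{Reduction.} First try the cheap reduction: delete $u_5$ and add $u_1u_4$, which keeps $\Delta\le 4$ and planarity. Then extend using Remark~\ref{saveonecolor-2}.
\begin{enumerate}
\item If $u_1,u_4,v_5$ do not carry both $1$-colors, color $u_5$ with the missing $1$-color.
\item Otherwise, by Remark~\ref{saveonecolor-2}(i), every $1$-colored neighbour of $u_5$ already sees both $1$-colors. Each $2$-colored neighbour sees at least one $1$-color, and, using (ii) on the adjacent pair, often sees two. We count $2$-colors within distance two of $u_5$ to decide whether one is free.
\end{enumerate}
The tight cases are those where $f(v_5)$ repeats a $1$-color of $u_1$ or $u_4$. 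In those cases we uncolor $u_4$ (or $u_1$, $u_2$) and recolor along the triangle $u_3u_4u_6$, mirroring Cases~2 and~3 of Lemma~\ref{no5cycled3d33cycle}.

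\textbf{Computer-assisted fallback.} With only seven $2$-colors the counting margins are thinner than in Section 2, and I expect the hand case analysis to break. If so, I follow the strategy of Lemmas~\ref{no434-2} and~\ref{no5cycle3d3cycle3cycle-2}:
\begin{enumerate}
\item Delete $u_1,\dots,u_5$ and add edges such as $v_1v_2$ (or $w_1v_2$), $u_6v_5$ and $v_3v_4$, chosen so that planarity and $\Delta(G')\le4$ hold. These edges keep boundary vertices that are within distance two through the deleted part properly constrained, and they force the $2$-conflicts we must avoid to be absent in $f$.
\item Let $B$ be the set of boundary vertices within distance two of the deleted set. As before, assume $|B|$ is maximal, since identifications among boundary vertices are covered by colorings with repeated colors.
\item Enumerate all pre-colorings of $B$ that are valid in $G'$ and test extendability to $u_1,\dots,u_5$ by computer.
\end{enumerate}

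The main obstacle is the small list of non-extendable pre-colorings the search will leave. For each, I would first apply the recoloring moves from earlier lemmas. If some $1$-colored-free boundary vertex misses a $1$-color, recolor it with that $1$-color. If $u_6$ misses a $1$-color in $N(v_6)\cup N(w_6)$, free it as in Lemma~\ref{no5cycle3d3cycle3cycle-2}. Either move converts the bad case into one already verified as extendable.
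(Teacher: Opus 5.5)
Your proposal does not yet prove the lemma. The first reduction (delete $u_5$, add $u_1u_4$) is only sketched, and you yourself expect the counting to fail with seven $2$-colors. The fallback then rests on a computer search you have not run. Your claim that every leftover bad pre-coloring is removed by ``recolor a boundary vertex with a missing $1$-color'' or ``free $u_6$'' is made without knowing what those cases are. In the paper this is where the real content lies: the search leaves $28$ specific non-extendable pre-colorings, each resolved by an argument tailored to it. For example, one shows $\{1_a,1_b\}\subseteq f(N(v_6))\cap f(N(w_6))$, since otherwise $v_6$ or $w_6$ can take a $1$-color and $u_3$ a specific $2$-color. Then $u_6$ gets a $2$-color so that $u_2,\dots,u_5$ (and in some cases $v_3,v_4$) take $1$-colors.

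More concretely, the reduction you propose for the fallback is not valid. After deleting $u_1,\dots,u_5$, the surviving neighbours appear around the new face in the cyclic order $v_2,v_3,u_6,v_4,v_5$, followed by $v_1,w_1$ in some order. The chords $v_3v_4$ and $u_6v_5$ interleave, so $G'$ need not be planar. Worse, the only pairs of surviving vertices at distance two through deleted vertices are $\{v_1,w_1\}$ (via $u_1$), $\{v_3,u_6\}$ (via $u_3$) and $\{u_6,v_4\}$ (via $u_4$). None of your added edges puts any of these pairs within distance two in $G'$. So a good coloring of $G'$ may give, say, $v_3$ and $u_6$ the same $2$-color, and then no coloring of $u_1,\dots,u_5$ extends it.

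The paper avoids both problems. It keeps $u_1$, deletes only $u_2,u_3,u_4,u_5$, and adds $u_1v_2,u_1v_5,u_6v_3,u_6v_4$. These are exactly the shortcuts of the length-two paths through the deleted vertices, and they join consecutive vertices on the new face, so nothing crosses. Degrees stay at most four because $u_1$ and $u_6$ each lose two neighbours and gain two. With that $G'$ and its $22$ boundary vertices, the exhaustive check plus the explicit treatment of the $28$ residual cases completes the proof.
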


\begin{proof}
Suppose not, i.e., there is such a structure in our graph $G$. Let $u_1u_2u_3u_4u_5u_1$ be a $5$-face, $u_2, u_5$ be two $3$-vertices, and $u_3u_4u_6u_3$ be a $3$-face. By Lemma~\ref{no33-2} and~\ref{3cycle4vertices-2}, $u_2u_5,u_1u_3,u_1u_4 \notin E(G)$, $v_2\notin\{u_4,v_1,w_1,v_3,u_6\}$, $v_5\notin\{u_3,v_1,w_1,v_4,u_6\}$ and $u_1,u_3,u_4, u_6$ are $4$-vertices. Let $N(u_1)=\{u_2,u_5,v_1,w_1\}$, $N(u_2)=\{u_1, u_3, v_2\}$, $N(u_3)= \{u_2,u_4,u_6,v_3\}$, $N(u_4)=\{u_3,u_5,u_6,v_4\}$, $N(u_5)=\{u_1,u_4,v_5\}$, $N(u_6)=\{u_3,u_4,v_6,w_6\}$, $N(v_2)=\{u_2, x_2, y_2, z_2\}$, $N(v_3)=\{u_3, x_3, y_3, z_3\}$, $N(v_4)=\{u_4, x_4, y_4, z_4\}$ and $N(v_5)=\{u_5, x_5, y_5, z_5\}$. We delete $u_2,u_3,u_4,u_5$ and add $u_1v_2,u_1v_5,u_6v_3,u_6v_4$ to obtain a planar graph $G'$ with $\Delta(G')\le4$. By the minimality of $G$, $G'$ has a good coloring $f$. Let $B = \{u_1,u_6,v_2,v_3,v_4,v_5,v_1, w_1,v_6, w_6, x_2, y_2, z_2, x_3, y_3, z_3, x_4, y_4, z_4, x_5, y_5, z_5\}$ be the boundary vertices. We may assume $|B|=22$, since if not, say $v_4,v_5$ have a common neighbour $x_4$ and each has two distinct neighbours $y_4,z_4,x_5,y_5$. We may assume $x_4,y_4,z_4,x_5,y_5$ are colored with $f(x_4),f(y_4),f(z_4),f(x_5),f(y_5)$. It can be covered by the case when each of $v_4,v_5$ has three distinct neighbours $x_4,y_4,z_4,x_5,y_5,z_5$ and they are colored with $f(x_4),f(y_4),f(z_4),f(x_5),f(y_5),f(x_4)$. 
Our proof strategy is to extend every possible pre-coloring of $B$ in $G'$ to a good coloring of $G$. We use $C++$ programs (see Appendix for a detailed explanation) to check if each pre-coloring can be extended. It turns out that we cannot directly extend to $G$ on only $28$ cases (See Figures~\ref{3d533-badcases} and~\ref{3d3d53-badcases}). 
In those cases, some vertices are assigned a special marker, $-1$, representing an undetermined $1$-color. When we try to extend the pre-coloring, the configuration is considered extendable if replacing $-1$ with either $1_a$ or $1_b$ yields a valid extension.
    
We solve those $28$ cases here. Since the arguments for the first 11 cases are identical, and the same holds for the remaining 17 cases, we only present the proofs for Case 1-1 and Case 2-1.

In Case 1-1, each of $v_6,w_6$ must see a $1$-color, since otherwise we recolor $v_6$ or $w_6$ with a $1$-color. Thus, we can color $u_6$ with an available $2$-color and $u_2,u_3,u_4,u_5,v_3,v_4$ with available $1$-colors.

In Case 2-1, $\{1_a,1_b\}\subseteq f(N(v_6))\cap f(N(w_6))$, since otherwise we can recolor $v_6$ or $w_6$ with an available $1$-color, $u_3$ with $2_6$ or $2_7$, and $u_2,u_4,u_5,u_6$ with available $1$-colors. Thus, we can color $u_6$ with an available $2$-color and $u_2,u_3,u_4,u_5$ with available $1$-colors. This is a contradiction.
\end{proof}

\textbf{Proof of Theorem~\ref{maintheorem2}:} By Euler's Formula for planar graphs, we have

\begin{equation}\label{discharging2}
\sum\limits_{v \in V(G)} (d(v) - 4) +  \sum\limits_{f \in F(G)} (d(f) - 4) = -8.  
\end{equation}

Let the initial charge of each vertex and face be defined as $Ch(v) = d(v) - 4$ and $Ch(f) = d(f) - 4$. We define a few rules to redistribute the charges so that every vertex and face has a final charge of at least $0$.

(R1) Each $3$-vertex receives $\frac{1}{2}$ from each $5^+$-face it belongs to.

(R2) Each $3$-face receives $\frac{1}{2}$ from each adjacent $5^+$-face.

By Lemma~\ref{mindegree-2}, we only need to consider $3$-vertices and $4$-vertices. Since each $4$-vertex does not give or receive any charges, its charge remains $0$ until the end. By Lemma~\ref{3cycle4vertices-2}, a $3$-vertex cannot belong to any $3$-faces. By Lemma~\ref{3vertex4cycle-2}, a $3$-vertex can belong to at most one $4$-face. Therefore, it belongs to at least two $5^+$-faces and its final charge is at least $3-4+ 2 \cdot \frac{1}{2} = 0$. Every vertex has a final charge of at least $0$.

It remains to show every face has a final charge of at least $0$. A $4$-face has final charge $0$, since it does not give or receive any charges. By Lemma~\ref{no333-2},~\ref{no334-2}, and~\ref{no434-2}, a $3$-face is adjacent to at least two $5^+$-faces and its final charge is at least $3-4+ 2 \cdot \frac{1}{2} = 0$. Let $f$ be a $5$-face. By Lemma~\ref{no33-2}, $f$ can have at most two $3$-vertices. If $f$ has two $3$-vertices, then by Lemma~\ref{3cycle4vertices-2} and~\ref{no5cycled3d33cycle-2}, it cannot be adjacent to any $3$-faces, and its final charge is at least $5 - 4 - 2 \cdot \frac{1}{2} = 0$. If $f$ has exactly one $3$-vertex, then by Lemma~\ref{3cycle4vertices-2},~\ref{bowtie-2}, and~\ref{no5cycle3d3cycle3cycle-2}, $f$ can be adjacent to at most one $3$-face, and its final charge is at least $5 - 4 - 2 \cdot \frac{1}{2} = 0$. Otherwise, $f$ has no $3$-vertices. By Lemma~\ref{bowtie-2}, $f$ can be adjacent to at most two $3$-faces, and its final charge is at least $5 - 4 - 2 \cdot \frac{1}{2} = 0$. 

For an $\ell$-face $f$, where $\ell \ge 6$, let $a$ be the number of $3$-vertices in $V(f)$ and $b$ be the number of $3$-faces adjacent to $f$.  By Lemma~\ref{3cycle4vertices-2}, $2a + b \le \ell$. By Lemma~\ref{bowtie-2}, $2b+a \le \ell$. Therefore, $\frac{a+b}{2} \le \frac{\ell}{3}$. We conclude that $f$ has a final charge of at least
$$ \ell - 4 - \frac{1}{2} a - \frac{1}{2} b \ge \ell - 4 - \frac{\ell}{3} = \frac{2 \ell}{3} - 4  \ge 0,$$
since $\ell \ge 6$. This is a contradiction with~\eqref{discharging2}, and the proof is completed.

\section{Open Questions}
We believe studying packing $(1^j,2^k)$-colorings can provide insights on how to solve hard questions in square coloring, such as Wegner's Conjecture. We end this paper by proposing two open questions.

\begin{op}
What is the minimum positive integer $k_1$ such that every planar graph with maximum degree at most four is packing $(1,2^{k_1})$-colorable?    
\end{op}

\begin{op}
What is the minimum positive integer $k_2$ such that every planar graph with maximum degree at most four is packing $(1^2,2^{k_2})$-colorable?    
\end{op}

We have shown in this paper that $6 \le k_1 \le 10$ and $4 \le k_2 \le 7$. We feel the upper bounds may be the best achievable bounds with the current method. Any further improvement appears to require a new idea.

\section*{Acknowledgement}

The authors would like to thank Bernard Lidick\'{y} and Yan Wang for the helpful discussions. The C++ program is adapted from their earlier code.

\section{Appendix}
We explain in detail how our $C++$ programs work. We explain only for Lemma~\ref{no434-2} since the core algorithms are the same. We notice that there are two types of structures in the boundary vertices, i.e., a $K_{1,2}$ and a $K_{1,3}$. We call the first type {\em small claw} and the second type {\em big claw}. We first analyze the number of possible colorings for each small claw and each big claw up to symmetry. 

For small claws, since there are two $1$-colors ($1_a, 1_b$) and seven $2$-colors ($2_1, 2_2, \ldots, 2_7$), there are $84$ different colorings. Specifically, we classify these colorings into four types (see Figure~\ref{small_claws}). 
Recall that we use $-1$ as an undetermined $1$-color. If all leaf vertices are assigned $2$-colors, we use $-1$ at the central vertex, indicating it can take either $1_a$ or $1_b$.
For Type 1, we choose two $2$-colors out of seven, yielding $\binom{7}{2} = 21$ colorings. In Type 2, the $1$-colors $1_a$ and $1_b$ can be switched, and we choose one $2$-color, giving $2 \times \binom{7}{1} = 14$ colorings. For Type 3, the exact choice between $1_a$ and $1_b$ does not affect the extendability of the coloring, because the vertex assigned the $1$-color is not the central vertex and is at a distance of at least two from all uncolored vertices; however, there are $7\times6$ ways to select the $2$-colors, resulting $42$ cases. Finally, Type 4 involves choosing a single $2$-color, providing exactly $\binom{7}{1}=7$ choices. 

To optimize our algorithm, we do not explicitly check Type 1', Type 2', and Type 4' in our program, as they reduce to Type 1, Type 2, and Type 4, respectively. Suppose a coloring of boundary vertices assigns a Type 1' coloring to a specific small claw. By recoloring the central vertex of that claw with $-1$, we obtain a coloring of boundary vertices where that claw is of Type 1. Since our program exhaustively verifies that all valid combinations containing Type 1 are extendable, the extendability of the original combination involving Type 1' is guaranteed. 
Since if a $1$-color is used in the leaf vertices of a small claw, then it will not influence the extendability, Type 2' is reduced to Type 2 and Type 4' is reduced to Type 4. Note that Type 3 cannot be reduced to Type 2, since there is often an edge joining two small claws in $G$.

\begin{figure}[ht]
\vspace{-23mm}
 \hspace{-18mm} 
  \includegraphics[scale=0.95]{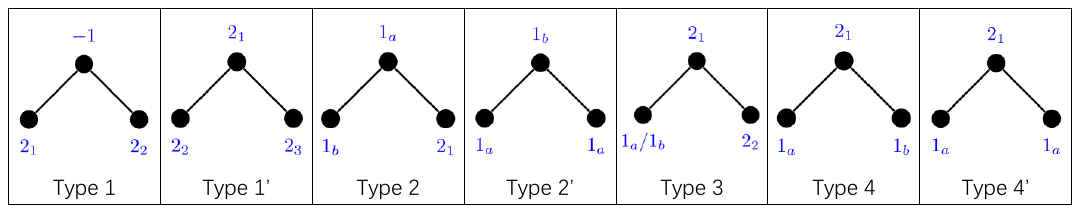}
  \vspace{-233mm}
\caption{Colorings of Small Claws}\label{small_claws}
\vspace{-1mm}
\end{figure}

\begin{figure}[ht]
\vspace{-22mm}
\hspace{2mm} 
  \includegraphics[scale=0.75]{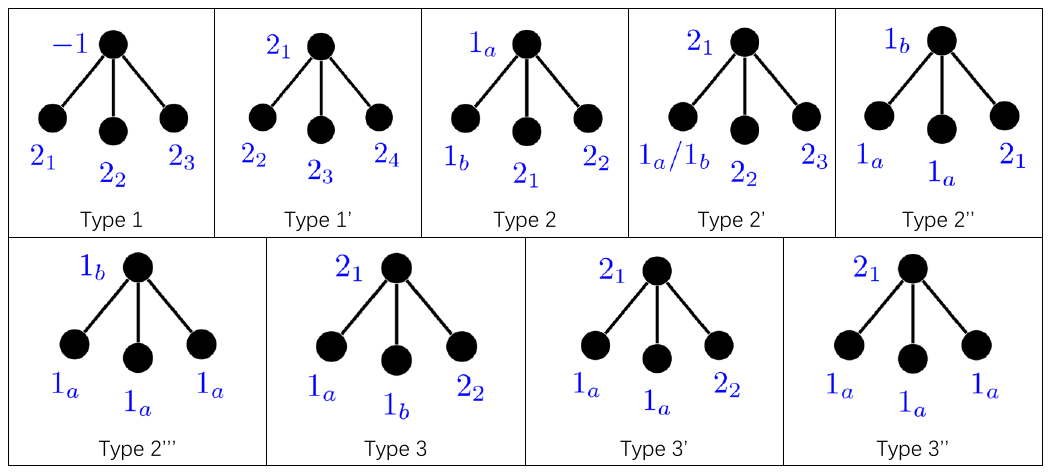}
  \vspace{-145mm}
\caption{Colorings of Big Claws}\label{big_claws}
\vspace{-3mm}
\end{figure}


For big claws, by similar analysis, there are $119$ different colorings. Specifically, we classify them into three types (see Figure~\ref{big_claws}). For Type 1, we choose three $2$-colors out of seven, resulting in $\binom{7}{3}=35$ colorings. In Type 2, $1_a$ and $1_b$ can be switched, and we choose two $2$-colors, giving $2\times\binom{7}{2}=42$ colorings. Finally, for Type 3, the exact choice between $1_a$ and $1_b$ does not affect the extendability of the coloring, because the vertex assigned the $1$-color is not the central vertex and is at a distance of at least two from all uncolored vertices; however, there are $7\times6$ ways to select the $2$-colors, resulting in $42$ cases.

To optimize our algorithm, we do not explicitly check Types 1', 2', 2'', 2''', 3', and 3'', as they can reduce to Types 1, 2 or 3. Using the same reduction method as for small claws, the extendability of configurations containing these colorings is guaranteed through the following steps: recoloring the central vertex with $-1$ reduces Type 1' to Type 1; recoloring the central vertex with an available $1$-color reduces Type 2' to Type 2; recoloring one or two leaves with available $2$-colors reduces Type 2'' and Type 2''' to Type 2; recoloring a leaf with the alternative $1$-color reduces Type 3' to Type 3; and recoloring one leaf with the alternative $1$-color and another with an available $2$-color reduces Type 3'' to Type 3.

The full code are available at ``https://sites.google.com/view/xujunliu1993/code''. The input file gives the number of small claws, the vertex number of each small claws, the number of big claws, the vertex number of each big claws, and the incidence matrix of the graph. Color $0$ means the vertex is uncolored.


\begin{figure}[ht]
\vspace{-23mm}
 \hspace{-18mm} \includegraphics[scale=0.95]{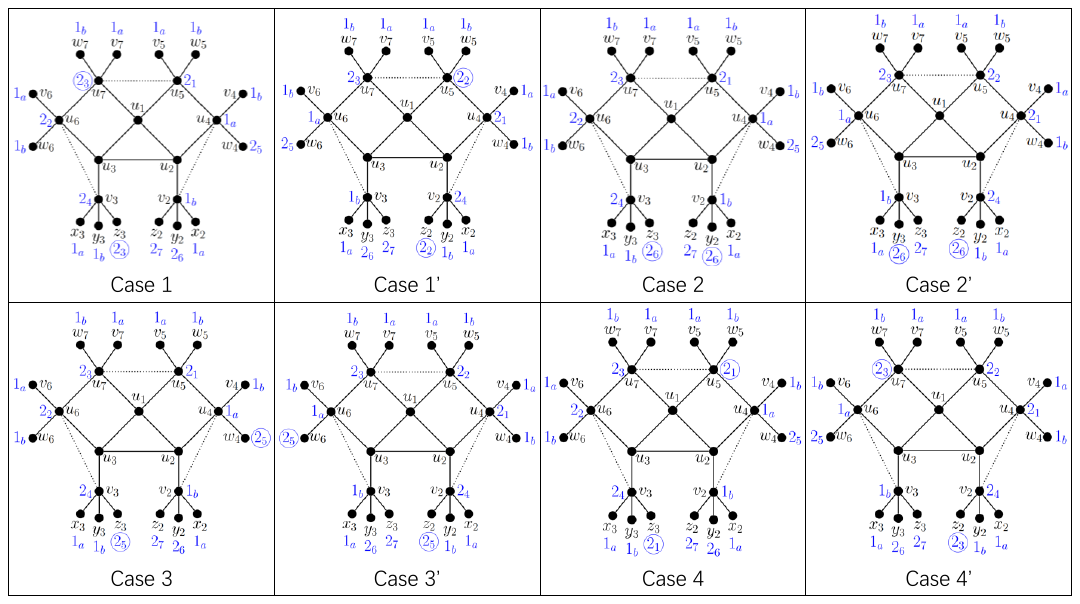}
  \vspace{-168mm}
\caption{Badcases for Lemma~\ref{no434-2}.}\label{434-badcases}
\end{figure}

\begin{figure}[H]
\vspace{-30mm}
\hspace{-17mm} \includegraphics[scale=0.95]{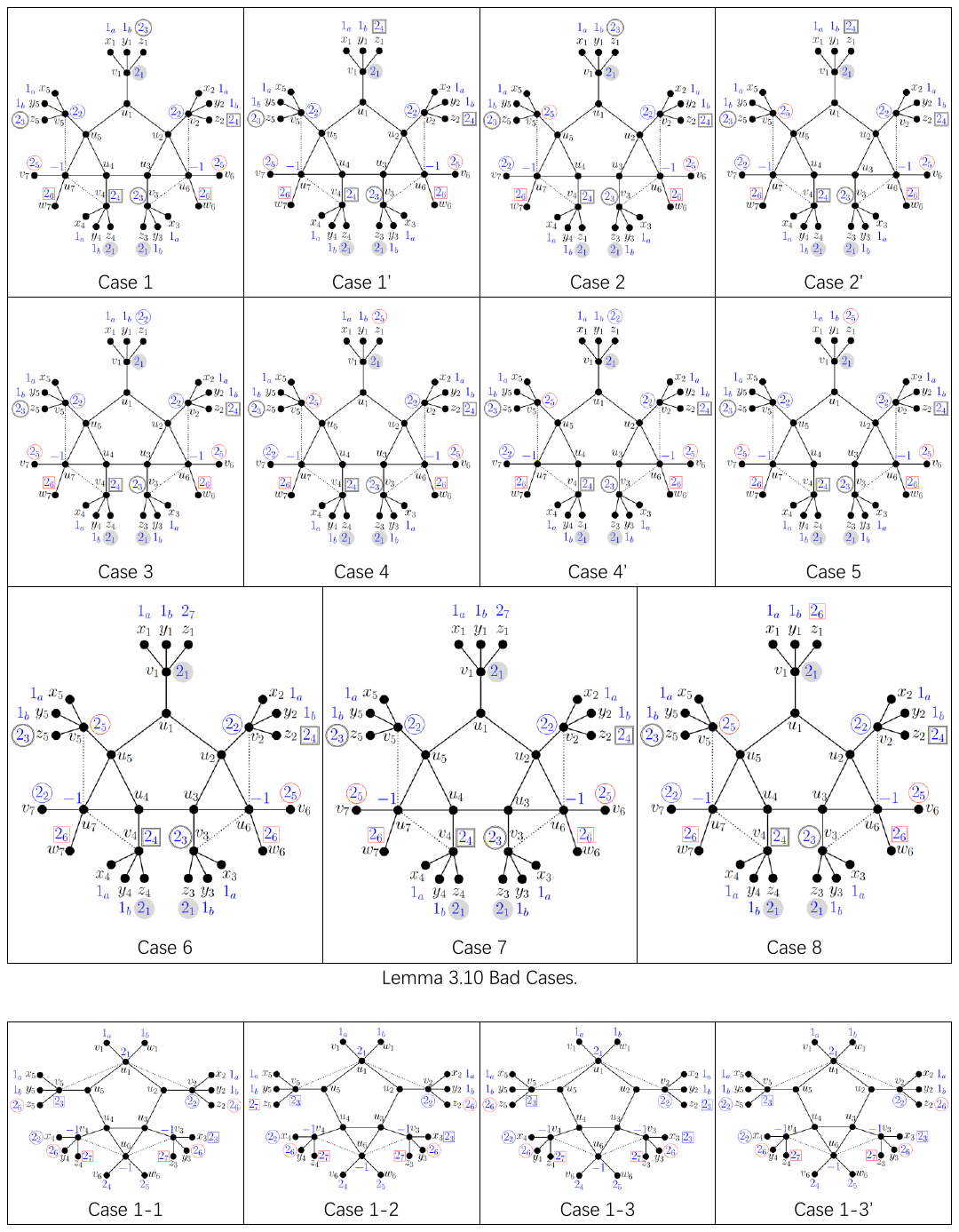}
\vspace{-37mm}
\caption{Bad cases for Lemma~\ref{no5cycle3d3cycle3cycle-2} and Lemma~\ref{no5cycled3d33cycle-2}.}\label{3d533-badcases}
\vspace{-2mm}
\end{figure}

\begin{figure}[H]
\vspace{-30mm}
 \hspace{-18mm} \includegraphics[scale=0.95]{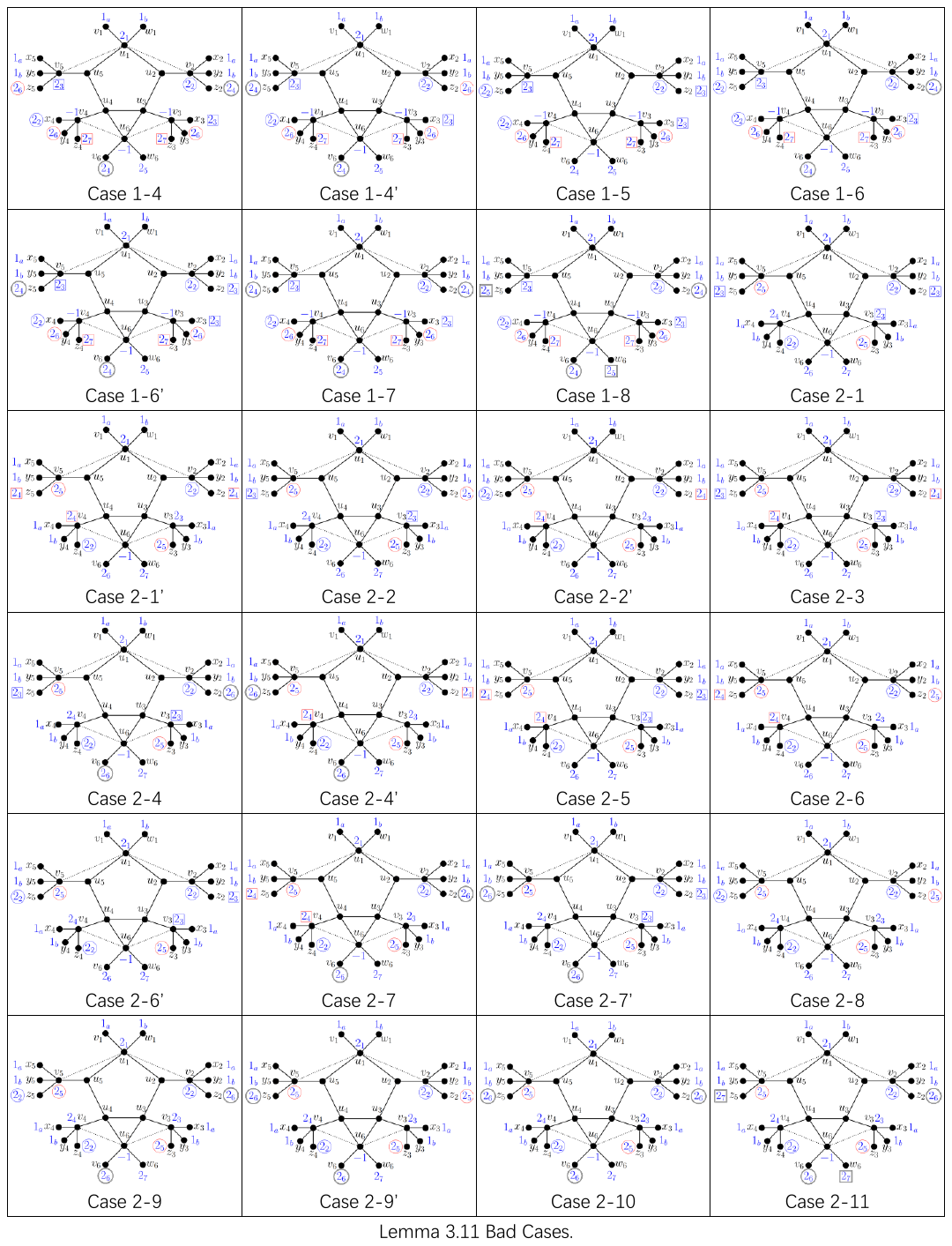}
 \vspace{-38mm}
\caption{Bad cases for Lemma~\ref{no5cycled3d33cycle-2}.}\label{3d3d53-badcases}
\vspace{-2mm}
\end{figure}


\subsection{C++ Program}

\begin{figure}[H]
 \hspace{-3mm} \includegraphics[scale=0.44]{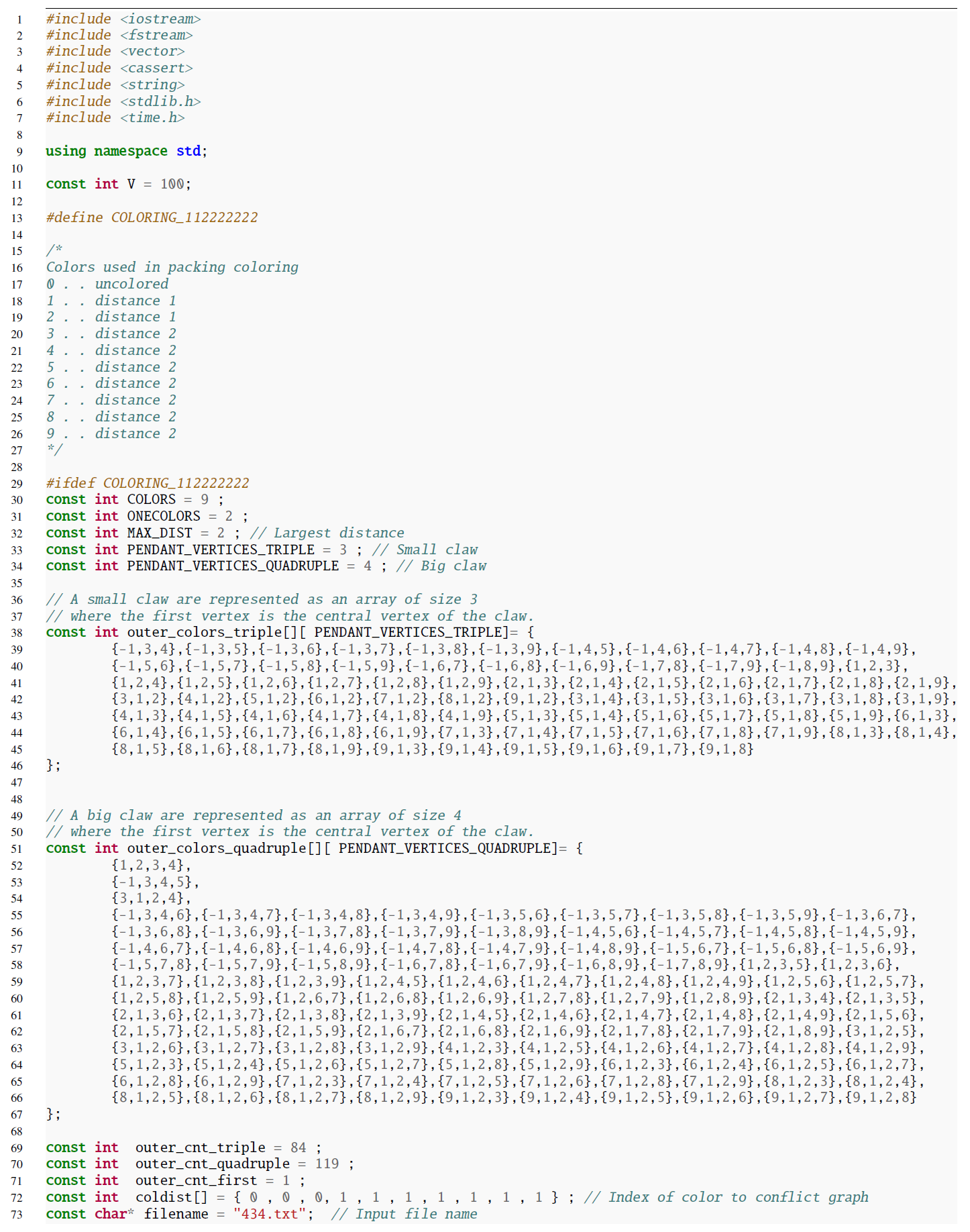}
\end{figure}

\begin{figure}[H]
 \hspace{-3mm} \includegraphics[scale=0.46]{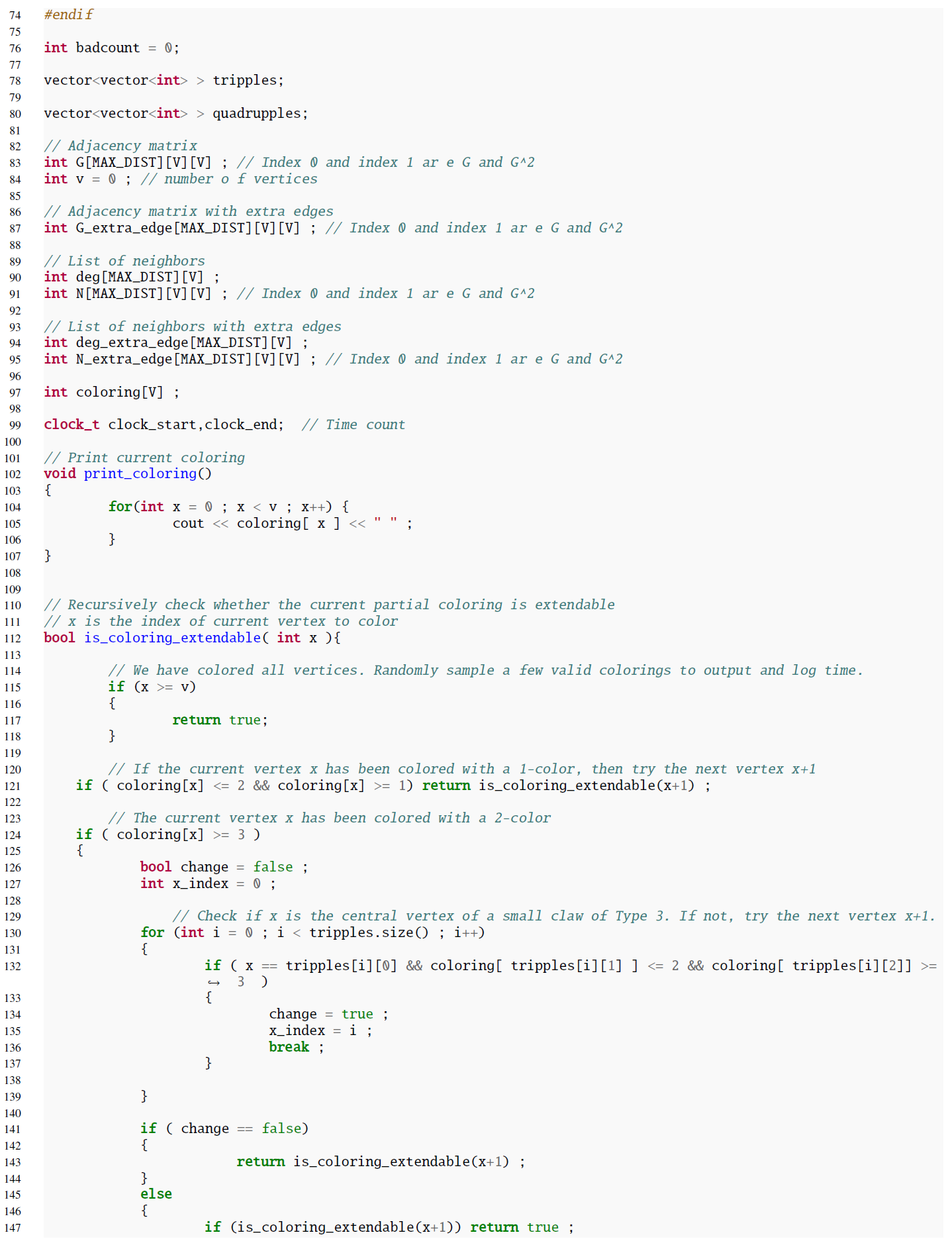}
\end{figure}

\begin{figure}[H]
 \hspace{-3mm} \includegraphics[scale=0.46]{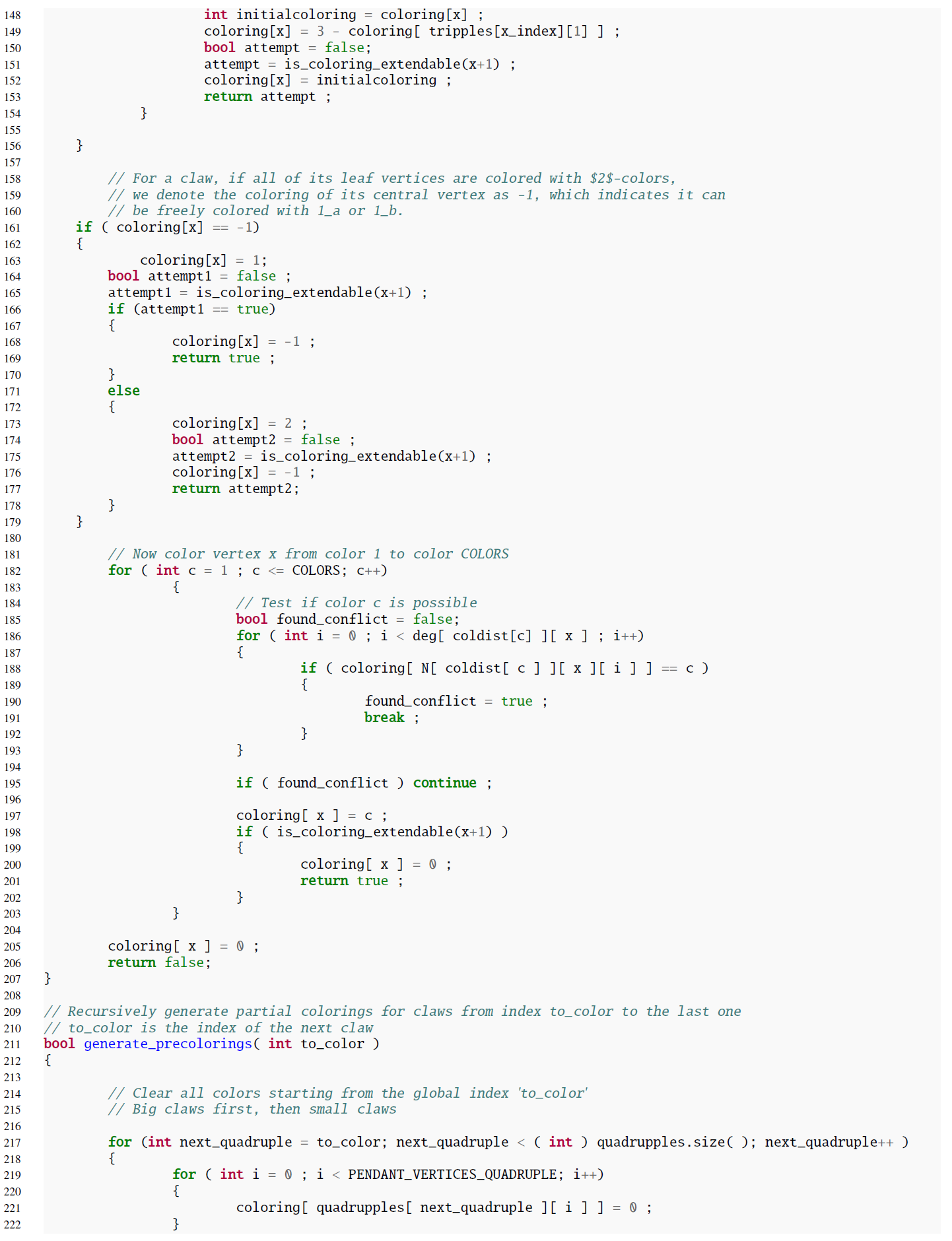}
\end{figure}

\begin{figure}[H]
 \hspace{-3mm} \includegraphics[scale=0.46]{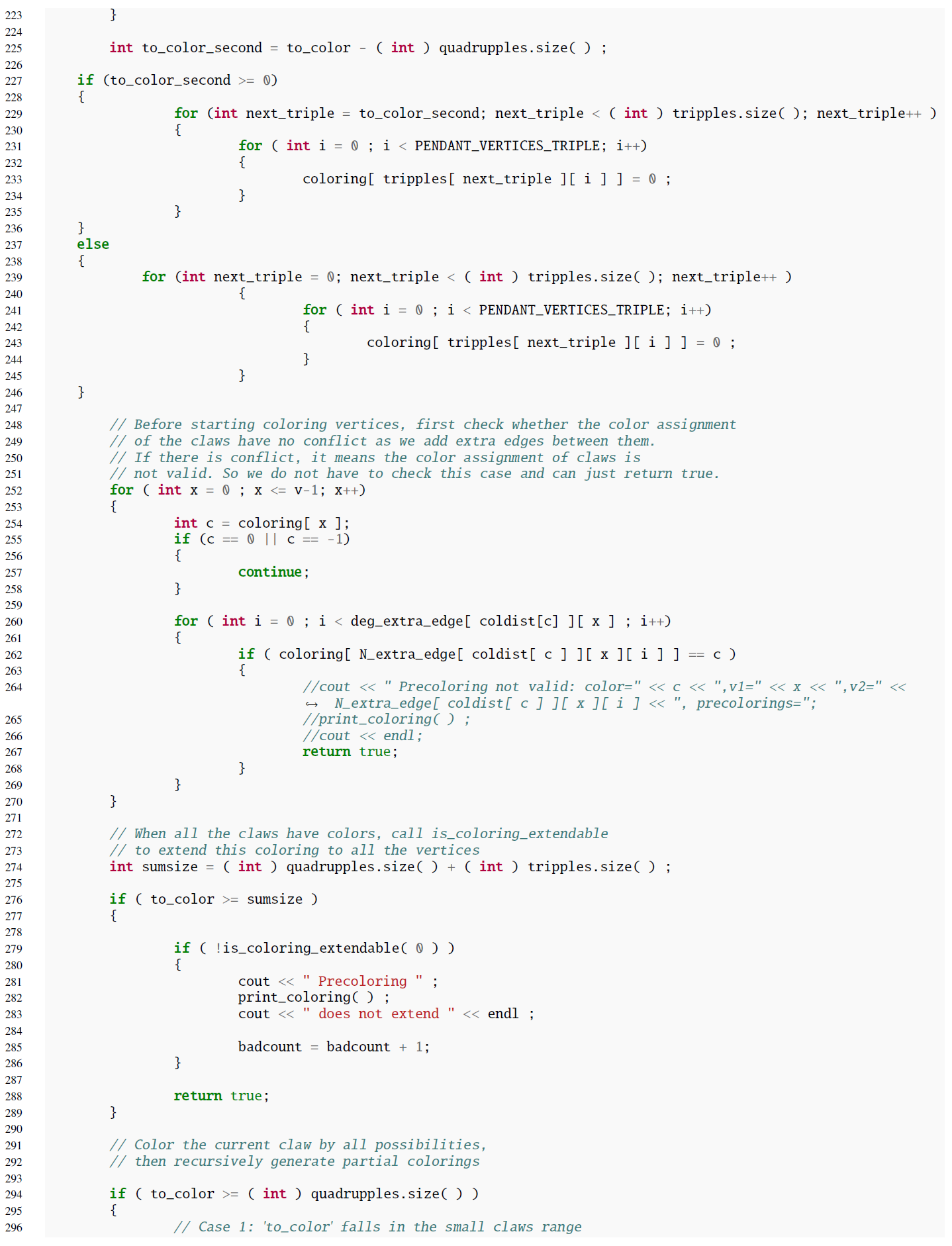}
\end{figure}

\begin{figure}[H]
 \hspace{-3mm} \includegraphics[scale=0.46]{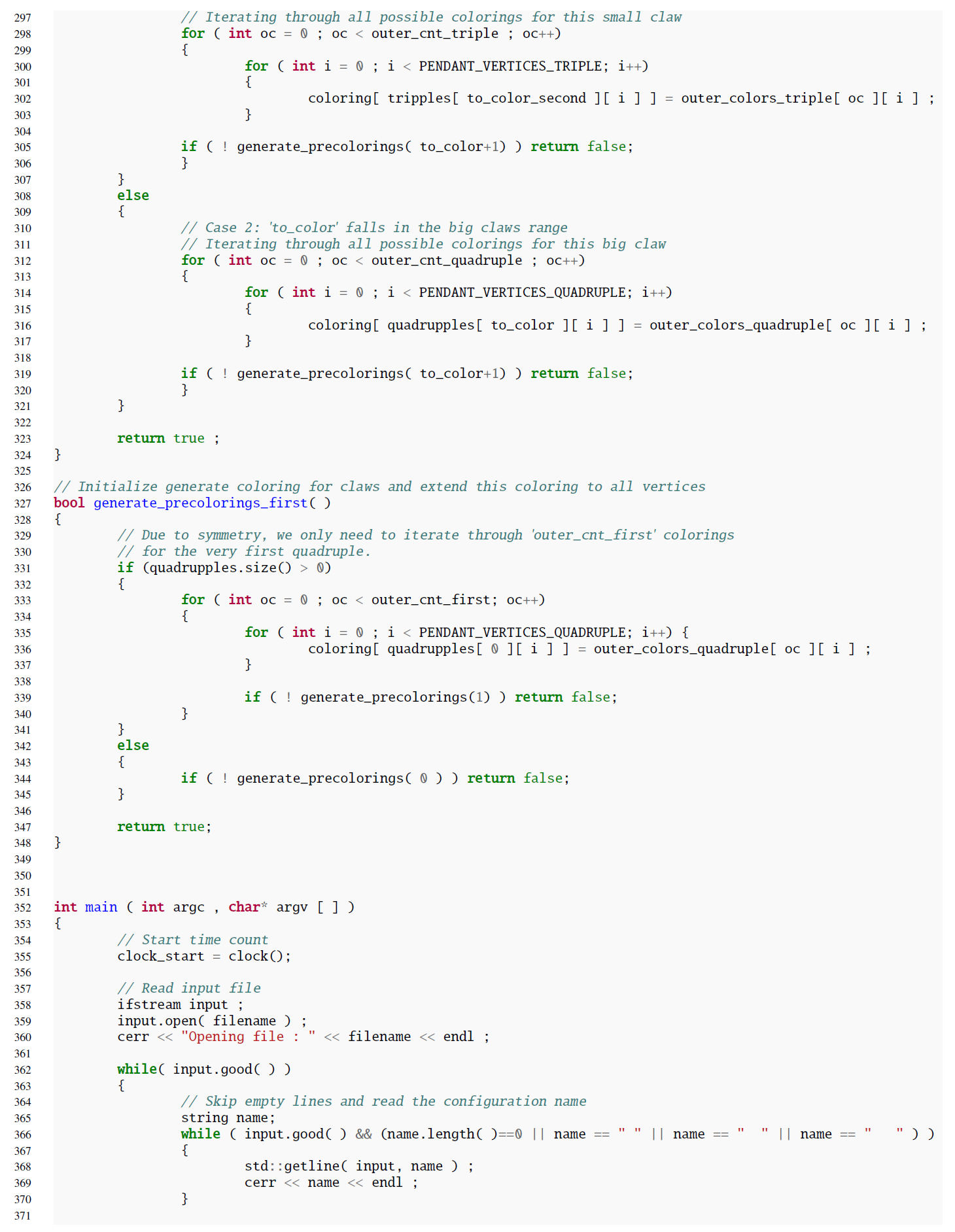}
\end{figure}

\begin{figure}[H]
 \hspace{-3mm} \includegraphics[scale=0.46]{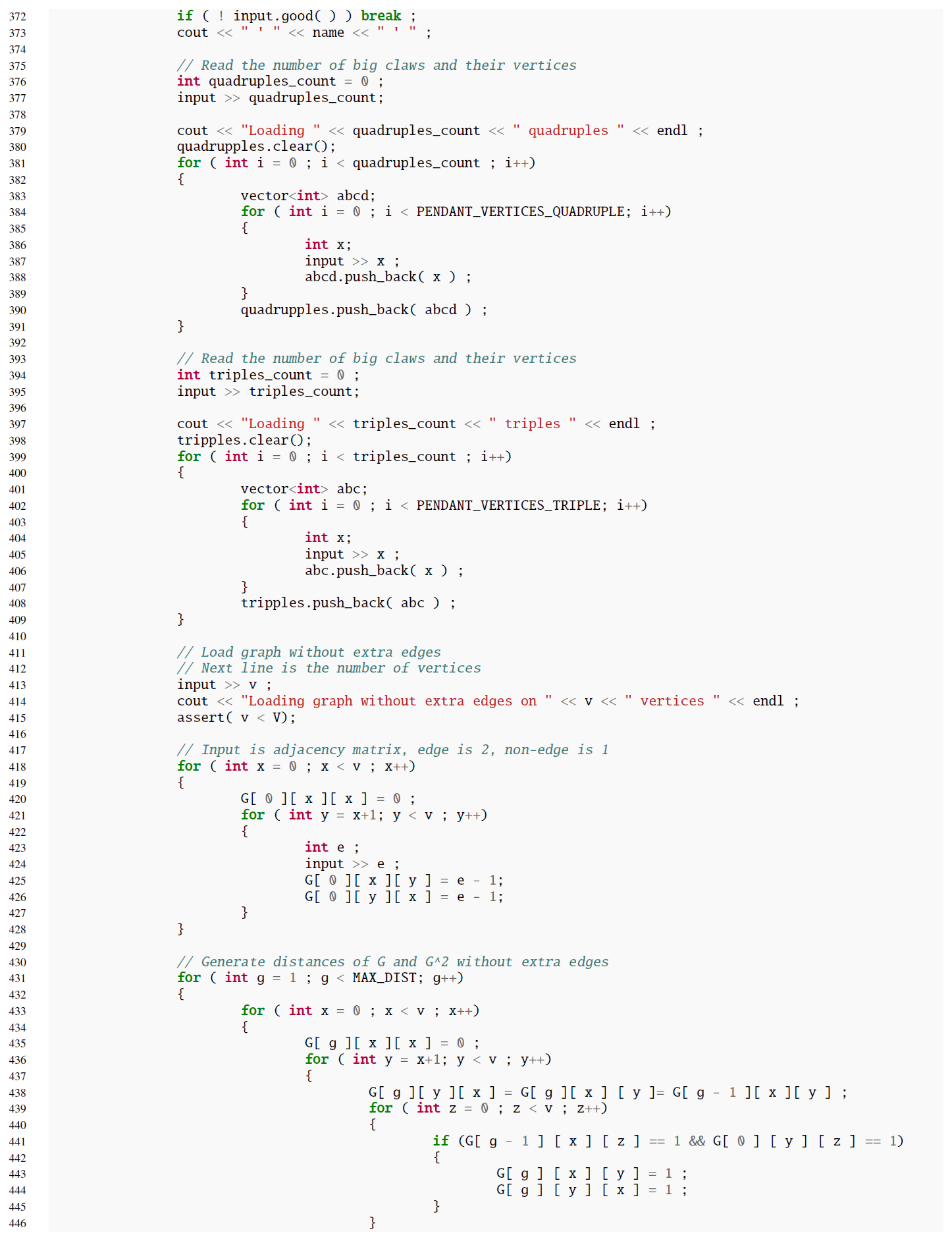}
\end{figure}

\begin{figure}[H]
 \hspace{-3mm} \includegraphics[scale=0.46]{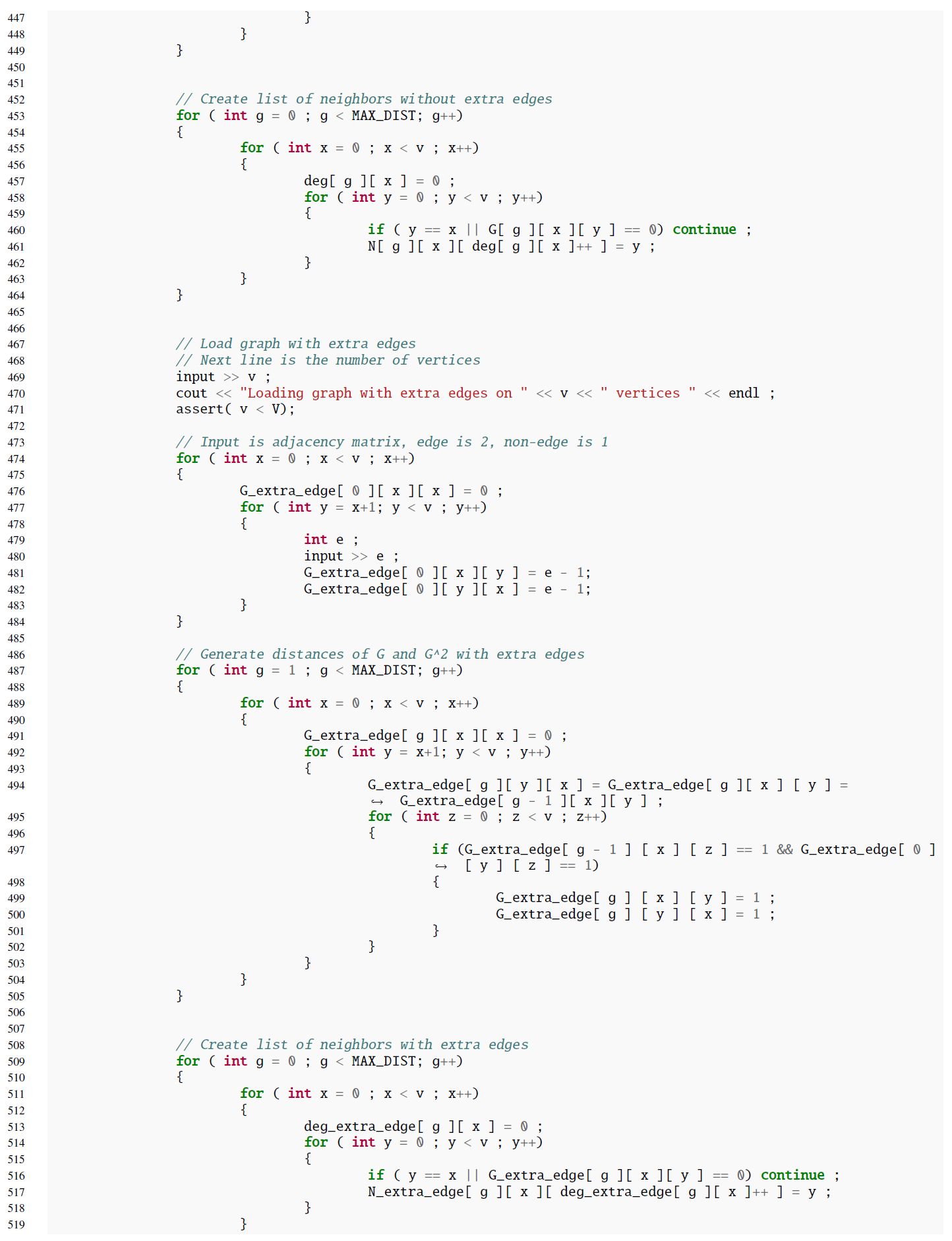}
\end{figure}

\begin{figure}[H]
 \hspace{-3mm} \includegraphics[scale=0.46]{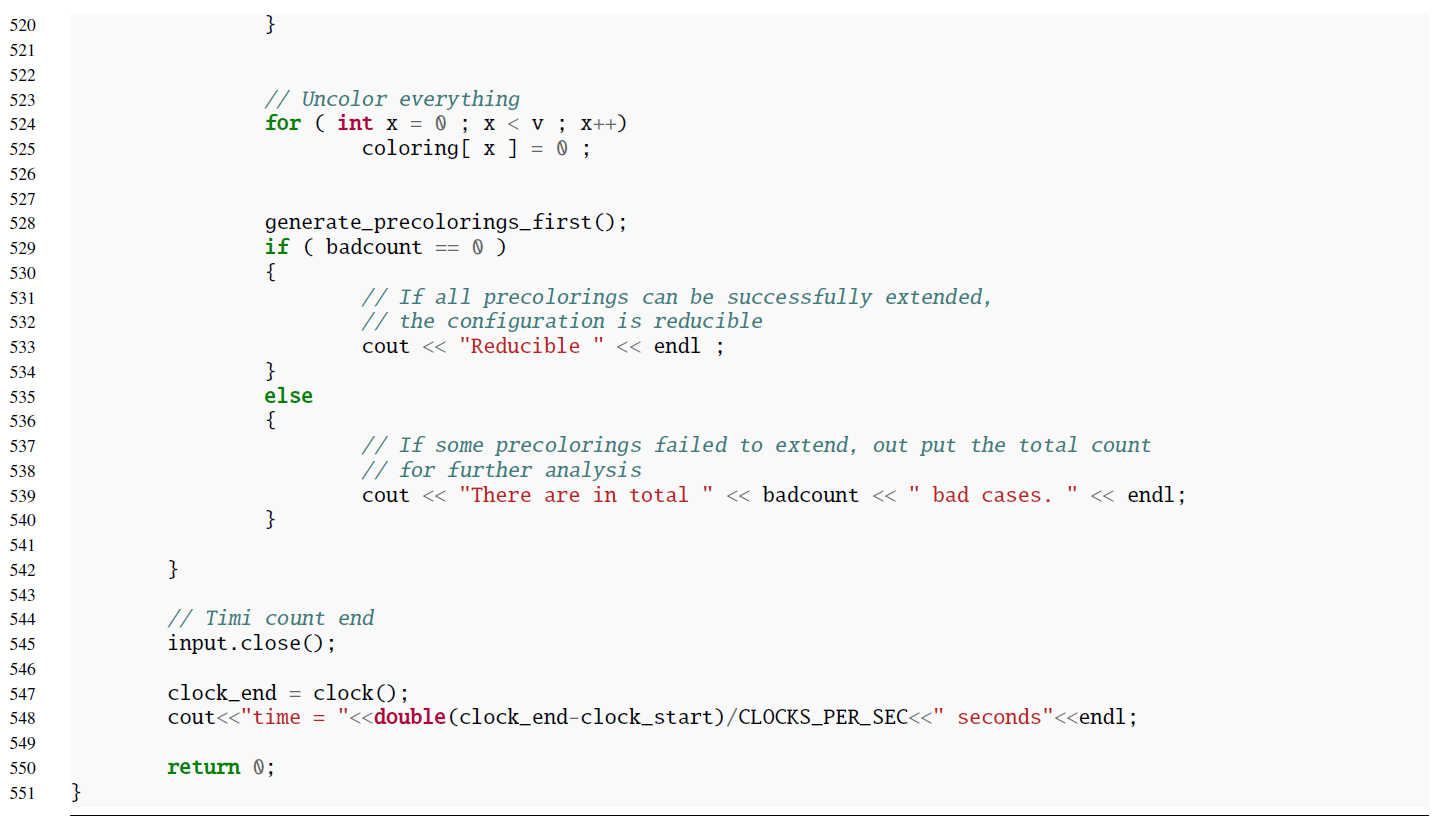}
\end{figure}

\subsection{Sample Input File}

As an example, we detail the structure of one of the input files used for Lemma~\ref{no434-2}. Here the first line is the name of our target configuration. The second and fifth lines indicate the number of big claws and small claws, respectively. Lines 3 and 4 list the vertex indices for the big claws, with the first integer in each line denoting the central vertex. Similarly, lines 6 through 9 list the vertex indices for the small claws, again starting with the central vertex. The tenth line provides the total number of vertices followed by the adjacency matrix of the original configuration. Finally, the eleventh line gives the number of vertices and the adjacency matrix of the configuration after the extra edges are added. Note that we use $2$ to denote edge and $1$ to denote non-edge. Since the adjacency matrices are symmetric and all diagonal entries are $1$, we only input their strictly upper triangular parts.

\begin{figure}[H]
 \hspace{-3mm} \includegraphics[scale=0.46]{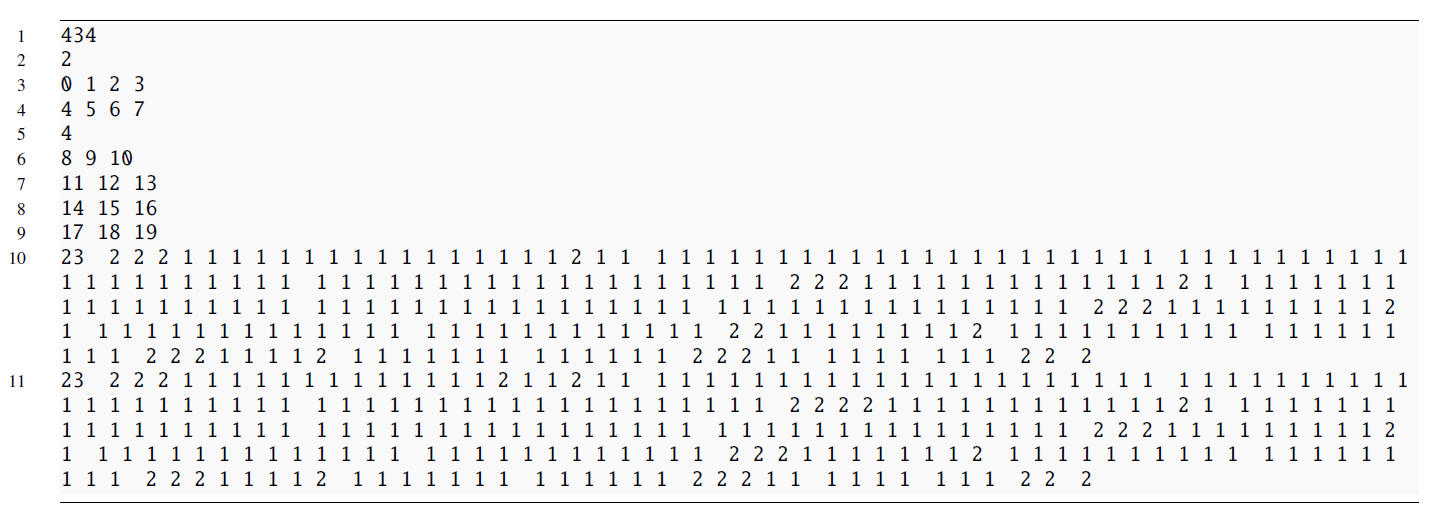}
\end{figure}

\end{document}